\numberwithin{equation}{section}
\newtheorem{theorem}{Theorem}[section]
\newtheorem{proposition}{Proposition}[section]
\newtheorem{lemma}{Lemma}[section]
\newtheorem{corollary}{Corollary}[section]
\newtheorem{definition}{Definition}[section]
\theoremstyle{remark}
\newtheorem{remark}{Remark}[section]
\def\R{\mathbb{R}}
\def\C{\mathcal{C}}
\def \H {\mathcal{H}}
\def \p{\partial}
\def \e {\varepsilon}
\def\v{\varepsilon}
\def\S{\mathbb{S}}
\def\n{\nabla}
\def \I {\mathcal I}
\def \F {\mathcal F}
\def \a {\alpha}
\def \O {\Omega}
\def \M {\mathcal{M}}
\def \N {\mathcal{N}}
\def \pin {\text{pin}}
\def\o{\omega}
\DeclareMathOperator{\dist}{dist}
\DeclareMathOperator{\dive}{div}
\DeclareMathOperator{\tr}{tr}
\DeclareMathOperator{\curl}{curl}
\DeclarePairedDelimiter{\abs}{\lvert}{\rvert}
\def\@seccntformat#1{\@ifundefined{#1@cntformat}%
   {\csname the#1\endcsname\quad}%      default
   {\csname #1@cntformat\endcsname}%    enable individual control
}
\author{Micka\"el Dos Santos, Rémy Rodiac, Etienne Sandier}
\title[rapidly oscillating GL energy]{The Ginzburg-Landau energy with a pinning term oscillating faster than the coherence length}
\address[M. Dos Santos]{Laboratoire de Math\'ematiques Blaise Pascal, Universit\'e Clermont Auvergne, UMR CNRS 6620, Campus des
Cézeaux, 63177 Aubière, France}
\email{mickael.dos$\_$santos@uca.fr}
\address[R. Rodiac]{Universit\'e Paris-Saclay, CNRS,  Laboratoire de math\'ematiques d'Orsay, 91405, Orsay, France}
\email{remy.rodiac@universite-paris-saclay.fr}
\address[E.Sandier]{Universit\'e Paris-Est, LAMA - CNRS UMR 8050, 61, Avenue du General de Gaulle, 94010 Cr\'eteil, France}
\email{sandier@u-pec.fr}
\date{\today}
\begin{document}
\begin{abstract}
The aim of this article is to study the magnetic Ginzburg-Landau functional with an oscillating pinning term. We consider here oscillations of the pinning term that are much faster than the coherence length \(\e>0\) which is also the inverse of the Ginzburg-Landau parameter. We study both the case of a periodic potential and of a random stationary ergodic one. We prove that we can reduce the study of the problem to the case where the pinning term is replaced by its average, in the periodic case, and by its expectation with respect to the random parameter in the random case. In order to do that we use a decoupling of the energy (see \cite{Lassoued_Mironescu_1999}) that leads us to the study of the convergence of a scalar positive minimizer of the Ginzburg-Landau energy with pinning term and with homogeneous Neumann boundary conditions. We prove uniform convergence of this minimizer towards the mean value of the pinning term by using a blow-up argument and a Liouville type result for non-vanishing entire solutions of the real Ginzburg-Landau/Allen-Cahn equation, due to Farina \cite{Farina_2003}.
\end{abstract}

% \subjclass[2010]{58E20, 35B65, 35J60, 35S05}
\maketitle
%\tableofcontents

\section{Introduction}
 Let \(G \subset \R^2\) be a smooth bounded domain. The main goal of this article is to study the following pinned Ginzburg-Landau energy:

\begin{equation}\label{eq:GL_pinned}
GL_\e^{\text{pin}}(u,A)=\frac12 \int_G |\nabla u-iAu|^2+\frac{1}{4\e^2}\int_G (a_\e(x)-|u|^2)^2 +\frac12 \int_G |\curl A-h_{ex}|^2,
\end{equation}
where \(\e>0, h_{ex}\geq0\) are parameters (here $\v$ is a small parameter: $\v\to 0$), \(u \in H^1(G,\mathbb{C}), A\in H^1(G,\R^2)\), \(\curl A= \p_1A_2-\p_2A_1\) and \(a_\e\) is a function oscillating at a rate \(\delta=\delta_\e{\ll}\e\). More precisely we will study the case where \(a_\e(x)=a_0\left(\frac{x}{\delta} \right)\), with \(a_0: \R^2\rightarrow \R\) a \(1\)-periodic function  and the case where \(a_\e(x)=a_1\left( T(\frac{x}{\delta})\omega\right)\), where \(a_1:\Omega \rightarrow \R\)  is a random variable defined on a probability space \(\Omega\) and \(T\) denotes an action of \(\R^2\) on \(\Omega\) which is stationary and ergodic. Although we do not specify it in our notation the parameter \(\delta\) depends on \(\e\), hence the notation \(a_\e\).
The functional \(GL_\e^{\text{pin}}\) is used to describe the behaviour of type-II superconductors in presence of impurities. In this model \(u\) is a complex order parameter, with \(|u|^2\) representing a normalized density of Cooper pairs of electrons in the sample \(G\), and \(h:=\curl A\) represents the magnetic field inside the sample. When the sample is a homogeneous material, {\it i.e.}\ \(a_\e \equiv 1\), and in the absence of magnetic field, {\it i.e.} when \(A=h_{ex}=0\), the functional \eqref{eq:GL_pinned} has been studied in the pioneering work of Bethuel-Brezis-Hélein \cite{Bethuel_Brezis_Helein_1994}. For the study of the functional with magnetic field we refer to \cite{Sandier_Serfaty_2007} and references therein. In order to describe heterogeneous materials, various authors have considered a modified Ginzburg-Landau energy where various fixed weights appear \cite{Beaulieu_Hadiji_1995,Andre_Shafrir_1998,Lassoued_Mironescu_1999,Andre_Bauman_Phillips_2003}. Oscillating pinning terms were also studied in \cite{Aftalion_Sandier_Serfaty_2001,DMM_2011,DosSantos_2013}. Here our setting is close to the one in \cite{Aftalion_Sandier_Serfaty_2001} except that the assumptions on \(a_\e\) are different: In \cite{Aftalion_Sandier_Serfaty_2001} the pinning term \(a_\e\) oscillates slower than \(\e\) (with our notation, the assumption made in \cite{Aftalion_Sandier_Serfaty_2001} would correspond to \(\delta {\gg}|\log\e|^{-1}\)). The study of \eqref{eq:GL_pinned} combines the difficulties of concentration phenomena in phase transitions theory and of homogenization effects due to oscillations. This is also the case in the recent paper \cite{Alicandro_Braides_Cicalese_DeLuca_Piatniski_2020} where the authors study the homogenization of an oscillating Ginzburg-Landau energy where the oscillating term occurs in the gradient. Oscillations in phase transitions problems were also studied in the context of the Allen-Cahn/ Modica-Mortola functional. On this subject we refer to \cite{Ansini_Braides_Chiado_2003,Dirr_Lucia_Novaga_2006,Dirr_Lucia_Novaga_2008, Cristoferi_Fonseca_Hagerty_Popovici_2019,Hagerty_2018_np}. We note that, the oscillating weight in the energies studied in those works are different from the one studied here. Of particular interest for us in this article are \cite{Ansini_Braides_Chiado_2003,Hagerty_2018_np} where the case when the oscillations are much faster than the phase transition parameter \(\e\) is considered. We note that, in these references, the hypothesis that \(\delta=o_\e(1)\) is not sufficient to obtain a homogenization result and the authors assume in both papers that \(\delta=o_\e(\e^{3/2})\) even though they do not use the same techniques.

We will describe the asymptotic behaviour of minimizers of \eqref{eq:GL_pinned}, but also of a similar pinned Ginzburg-Landau functional in three dimensions and a pinned Allen-Cahn functional in $d$ dimensions, for arbitrary $d$.  

Let \(Q=(0,1)^d\) be the unit cube in \(\R^d\). We consider a function \(a_0 \in L^\infty(Q,\R)\) which satisfies
\begin{equation}\label{hyp:bounds_on_a_0}
\text{ there exist } 0<m<M \text{ such that } m< a_0(x) < M \text{ a.e.\ in } Q.
\end{equation}
Without loss of generality we can assume that
\begin{equation}\nonumber
\int_Q a_0(y) dy =1 \quad \text{ and } \quad m< 1 < M,
\end{equation}
and we will indicate how to adapt the arguments to the case $\M=\sqrt{\int_Q a_0}\neq1$.

\noindent We can see \(a_0\) as a \(1\)-periodic function  (still denoted by \(a_0\)) in \(\R^d\) by setting
\begin{equation}\nonumber
a_0(x)=a_0(x_1-\lfloor x_1 \rfloor, \cdots, x_d -\lfloor x_d \rfloor) \text{ for } x=(x_1,\cdots, x_d) \in \R^d,
\end{equation}
where \( \lfloor \cdot \rfloor \) denotes the integer part of a real number.
We first consider  the case where the pinning term is defined by 
\begin{equation}\label{def:a_e_periodic}
a_\e(x):=a_0\left( \frac{x}{\delta}\right) \quad \text{ with } \delta=\delta_\e\ll\e.
\end{equation}

We will also consider the case where the pinning term oscillates randomly. Let \( (\Omega, \Sigma,\mu)\) be a probability space. We assume that \(\R^d\) acts on \(\Omega\) by measurable isomorphisms and we denote by \(T\) this action. More precisely this means that for every \(x\in \R^d\), we have an application \( T(x): \Omega \rightarrow \Omega\) such that \(\mu[T(x)(A)]=\mu(A)\) for every set \(A\) in the \(\sigma\)-algebra \(\Sigma\), and we have that \(T(x+y)=T(x) \circ T(y)\) for every \(x,y\) in \(\R^d\).

\noindent We recall that a function \( a:\O \times \R^d\to\R\) is said to be \textit{stationary} with respect to the action \(T\) if \( a(\omega,x+y)=a(T(y)\omega,x)\) for every \(x,y\in \R^d\) and for almost every \(\omega \in \Omega\). A typical example of a stationary process is given by
\begin{equation}\label{eq:stationary_process}
\tilde{a}_0(\omega,x)=a_1(T(x)\omega) \quad \text{ with } a_1: \Omega \rightarrow \R \text{ a mesurable function.}
\end{equation}
We also recall that a function \(f: \Omega \rightarrow \R\) is \(T\)-invariant if \( f(T(x)\omega)=f(\omega)\) for every \(x\in \R^d\) and a.e.\ \(\omega \in \Omega\). The action \(T\) is \textit{ergodic} if every function that is invariant with respect to  \(T\) on \(\Omega\) is constant almost everywhere on \(\Omega\).

\noindent When considering a stationary-ergodic pinning term, we will assume that \({\tilde a_0}\) is given by \eqref{eq:stationary_process} with \(a_1\in L^\infty(\O,\R)\) which satisfies
\begin{equation}\label{def:a_1}
 m<a_1<M \text{ for some } 0< m< M.
\end{equation}
Without loss of generality we will assume that 
\begin{equation}\nonumber
\mathbb{E}(a_1)=1 \quad \text{ and } \quad 0<m< 1 < M,
\end{equation}
and we will indicate briefly how to adapt the argument to the case $\mathbb{E}(a_1)\neq1$.

\noindent Then, the pinning term will take the form
\begin{equation}\label{def:a_e_random}
a_\e(\omega,x)=\tilde{a}_0\left(\omega,\frac{x}{\delta}\right)=a_1\left(T\left(\frac{x}{\delta} \right)\omega \right) \quad \text{ with } \delta=\delta_\e\ll\e.
\end{equation}

Given a smooth bounded domain \(G\)  in \(\R^2\), we define the  (unpinned) Ginzburg-Landau energy of $(u,A)\in\H:=H^1(G,\mathbb{C})\times H^1(G,\R^2)$ by
\begin{equation}\label{eq:GL_classic}
GL_\e(u,A)=\frac12 \int_G |\nabla u-iAu|^2+\frac{1}{4\e^2}\int_G(1-|u|^2)^2 +\frac12 \int_G |\curl A-h_{\text{ex}}|^2.
\end{equation}

\noindent We also define the pinned energy without magnetic field
\begin{equation}\label{def:E_eps_pin}
E^{\text{pin}}_\e(u)= \frac12 \int_G |\nabla u|^2+ \frac{1}{4\e^2}\int_G (a_\e(x)-|u|^2)^2.
\end{equation}
Note that for any $(u,A)\in \H$ we have $GL_\e^{\text{pin}}(u,A)\geq E^{\text{pin}}_\e(|u|).$  We define the  denoised energy as
\begin{equation}\label{WGL}\widetilde{GL}_\e^{\text{pin}}(u,A):=GL_\e^{\text{pin}}(u,A)-\min_{U \in H^1(G)} E^{\text{pin}}_\e(U).\end{equation}
We will see in Corollary \ref{CorUniqSpecialSol} and Definition \ref{Def.SpecialSol} below that \( E^{\text{pin}}_\e(U)\) has a unique positive minimizer $U_\e$ in \(H^1(G)\).

Our main result is the following
\begin{theorem}\label{th:main1}
Assume that \( \delta=o_\e(\e)\) and that \(a_\e\) is given by \eqref{def:a_e_periodic} (respectively  \eqref{def:a_e_random}). Then \(U_\e\), the unique positive minimizer of $E^{\text{pin}}_\e$ in \(H^1(G)\), satisfies (respectively satisfies almost surely)
$$ \lim_{\e\to 0} \|U_\e-1\|_{L^\infty(G)} = 0.$$
Also, given   \((u_\e,A_\e)\), we have 
\begin{equation}\label{eq:expansion_main1}
\widetilde{GL}_\e^{\text{pin}}(u_\e,A_\e)=GL_\e(v_\e,A_\e)\left(1+O_\e\left(\|U_\e-1\|_{L^\infty(G)} \right)\right),\end{equation}
where $v_\v:=u_\v/U_\v$.

In particular \((u_\e,A_\e)\) is a family of quasi-minimizers of \( GL_\e^{\text{pin}}\) and \(\widetilde{GL}_\e^{\text{pin}}\) in \(\mathcal{H}\) {if and only if } \( (v_\e,A_\e)\) is a family of quasi-minimizers of the unpinned energy \(GL_\e\) in \(\mathcal{H}\). This equivalence holding only almost-surely in the stationary ergodic case.
\end{theorem}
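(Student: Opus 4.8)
The plan is to reduce everything to the two-part statement: (i) uniform convergence $\|U_\e-1\|_{L^\infty(G)}\to 0$, and (ii) the energy-splitting identity \eqref{eq:expansion_main1}. The quasi-minimizer equivalence will then follow formally from (ii).

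First I would establish (i) by a blow-up and compactness argument. The minimizer $U_\e$ solves the Euler–Lagrange equation $-\Delta U_\e = \tfrac{1}{\e^2}U_\e(a_\e - U_\e^2)$ in $G$ with homogeneous Neumann boundary condition, and by the maximum principle (comparison with the constants $\sqrt m$ and $\sqrt M$) one has $\sqrt m \le U_\e \le \sqrt M$ on $\overline G$. Suppose (i) fails: then there are $\e_n\to 0$ and points $x_n\in\overline G$ with $|U_{\e_n}(x_n)-1|\ge c>0$. Rescale by setting $\tilde U_n(y) := U_{\e_n}(x_n + \e_n y)$, which solves $-\Delta \tilde U_n = \tilde U_n(\tilde a_n - \tilde U_n^2)$ on a domain invading $\R^d$ (or a half-space, if $x_n$ approaches $\partial G$, a case I would handle by a reflection/boundary-Liouville variant). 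The key point is that $\tilde a_n(y) = a_{\e_n}(x_n+\e_n y)$ oscillates at scale $\delta_{\e_n}/\e_n \to 0$ because $\delta \ll \e$, so $\tilde a_n \rightharpoonup 1$ weakly-$*$ in $L^\infty_{loc}$ (using $\int_Q a_0 = 1$, resp. $\mathbb E(a_1)=1$ together with the ergodic theorem in the random case — this is where the "almost surely" enters). By elliptic estimates $\tilde U_n\to U_\infty$ in $C^2_{loc}$, and I must check that $U_\infty$ solves $-\Delta U_\infty = U_\infty(1-U_\infty^2)$: the weak convergence of $\tilde a_n$ suffices to pass to the limit in the weak formulation since $\tilde U_n\to U_\infty$ strongly. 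Since $\sqrt m \le U_\infty \le \sqrt M$ with $m>0$, $U_\infty$ is a bounded entire (or half-space with Neumann data) solution that does not vanish, so Farina's Liouville theorem forces $U_\infty \equiv 1$, contradicting $|U_\infty(0)-1|\ge c$.

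Next, (ii) is the Lassoued–Mironescu decoupling. Writing $u_\e = U_\e v_\e$, a direct computation (integrating by parts and using the equation for $U_\e$) gives the exact identity
\begin{equation}\nonumber
GL_\e^{\text{pin}}(u_\e,A_\e) = E^{\text{pin}}_\e(U_\e) + \frac12\int_G U_\e^2\,|\nabla v_\e - iA_\e v_\e|^2 + \frac{1}{4\e^2}\int_G U_\e^4\,(1-|v_\e|^2)^2 + \frac12\int_G|\curl A_\e - h_{ex}|^2,
\end{equation}
hence $\widetilde{GL}_\e^{\text{pin}}(u_\e,A_\e)$ equals the last three terms. Comparing with $GL_\e(v_\e,A_\e)$ (same expression but with $U_\e^2$ and $U_\e^4$ replaced by $1$), and using $\|U_\e^2-1\|_{L^\infty},\|U_\e^4-1\|_{L^\infty} = O(\|U_\e-1\|_{L^\infty})$ from (i), each term differs multiplicatively by a factor $1+O_\e(\|U_\e-1\|_{L^\infty})$; since all terms are nonnegative this yields \eqref{eq:expansion_main1}. (One should note both quantities are $\ge 0$ and remark that when $\M\ne 1$ the same identity holds with $a_0$ replaced by $a_0/\M^2$ and a rescaled $U_\e\to\M$.)

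Finally, the quasi-minimizer equivalence. The map $(u,A)\mapsto(v,A)=(u/U_\e,A)$ is a bijection of $\H$ onto itself (since $U_\e$ is smooth, bounded above and below), and by \eqref{eq:expansion_main1} applied to both a competitor and to (near-)minimizers, $\widetilde{GL}_\e^{\text{pin}}(u_\e,A_\e) - \inf_\H \widetilde{GL}_\e^{\text{pin}} = \big(GL_\e(v_\e,A_\e) - \inf_\H GL_\e\big)\big(1+o_\e(1)\big)$ up to the same error, because the infima correspond under the bijection up to the factor $1+O_\e(\|U_\e-1\|_{L^\infty})$ and $\|U_\e-1\|_{L^\infty}\to 0$. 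Hence the energy excess of $(u_\e,A_\e)$ for $\widetilde{GL}_\e^{\text{pin}}$ (equivalently for $GL_\e^{\text{pin}}$, which differs by the constant $\min E^{\text{pin}}_\e$) is $o_\e(1)$ iff the excess of $(v_\e,A_\e)$ for $GL_\e$ is $o_\e(1)$; in the random case this holds on the almost-sure event where (i) holds.

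I expect the main obstacle to be the blow-up step: specifically, justifying that $\tilde a_n \rightharpoonup 1$ suffices to pass to the limit (the nonlinearity is cubic in $\tilde U_n$, which is fine by $C^2_{loc}$ compactness, but the product $\tilde a_n \tilde U_n$ requires weak-times-strong convergence, which is legitimate), handling the boundary blow-up case via a Liouville result on the half-space with Neumann condition, and — in the stationary ergodic setting — invoking the multiparameter ergodic theorem uniformly enough in the base point $x_n$ to get the almost-sure weak convergence of the rescaled pinning term.
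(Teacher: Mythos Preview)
Your proposal is correct and follows essentially the same route as the paper: the blow-up/Farina--Liouville argument for $\|U_\e-1\|_{L^\infty}\to 0$ (including the half-space reflection near $\partial G$ and the weak-times-strong passage to the limit in $\tilde a_n\tilde U_n$), the Lassoued--Mironescu decoupling for \eqref{eq:expansion_main1}, and the bijection $(u,A)\leftrightarrow(v,A)$ for the quasi-minimizer equivalence are exactly what the paper does. Two minor caveats worth tightening: since $a_\e$ is only $L^\infty$, the rescaled $\tilde U_n$ are precompact in $C^{1,\alpha}_{\mathrm{loc}}$ rather than $C^2_{\mathrm{loc}}$ (the paper uses just a Lipschitz bound plus Arzel\`a--Ascoli, which already suffices), and in your last paragraph ``energy excess is $o_\e(1)$'' should read ``energy excess is $o_\e(\inf GL_\e)$'' to match the definition of quasi-minimizer---neither point affects the validity of the argument.
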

By a family $(x_\e)$ of quasi-minimizers for some family of functionals $(F_\e)$ we mean a family which satisfies \(F_\e(x_\e)=(1+o_\e(1)) \inf F_\e\) as $\e\to 0$. 

Theorem \ref{th:main1} allows us to describe the behaviour of $u_\e$ as $\e\to 0$, and  in particular the behaviour of the vortices of \(u_\e\),  by using   the literature concerning the minimizers of \(GL_\e\).

\begin{remark}\label{RmarkUnifConvMNeq1} If we do not assume that \( \mathcal{M}:=\sqrt{\int_Q a_0(y) dy} =1\) in the periodic case, or that \( \mathcal{M}:=\sqrt{\mathbb{E}(a)}=1\) in the random case, then Theorem \ref{th:main1} has to be modified as follows. With the same assumptions and notations we have that \( (v_\e,A_\e)\) is a family of quasi-minimizers of $GL^{\mathcal{M}}_\e$ (instead of $GL_\e$) where for $(v,A)\in\H$
\begin{eqnarray}\nonumber
GL^{\mathcal{M}}_\e(v,A)&=&GL^{\mathcal{M}}_{\e,h_{\rm ex},G}(v,A)
\\\nonumber&:=&\frac{\M^2}{2} \int_G |\nabla v-iAv|^2+\frac{\M^4}{4\e^2}\int_G(1-|v|^2)^2 +\frac12 \int_G |\curl A-h_{\text{ex}}|^2
\\\nonumber&=&\frac{\M^2}{2} \int_{\M\cdot G} |\nabla v'-iA'v'|^2+\frac{1}{2\e^2}(1-|v'|^2)^2 + |\curl A'-h_{\rm ex}/\M^2|^2
\\\nonumber&=&\M^2 GL_{\e,h_{\rm ex}/\M^2,\M\cdot G}(v,A)
\end{eqnarray}
where $A'(\cdot)=A(\cdot/\M)/\M$, $v'(\cdot)=v(\cdot/\M)$ and the expansion \eqref{eq:expansion_main1} holds. The study of \(GL^{\mathcal{M}}_{\e,h_{\rm ex},G}\) reduces to the study of \(GL_{\v,h_{\rm ex}/\M^2,\M\cdot G}\) in the dilated domain \(\mathcal{M} \cdot G\) by a change of variable and a dilation of the unknowns. This is similar to the operations made in the nondimensionalizing process of the GL functional (see e.g.\ section 2.1.1 in \cite{Sandier_Serfaty_2007}).
\end{remark}

This article is organized as follows. In section \ref{sec:substituion_lemma} we recall the decomposition Lemma from  \cite{Lassoued_Mironescu_1999} and we show how it reduces the study of the problem to the convergence of a minimizer of \(E^{\text{pin}}_\e\) in \(H^1(G)\), with \(E^{\text{pin}}_\e\) being defined in \eqref{def:E_eps_pin}. In particular we show that there exists a unique positive minimizer $U_\v$ of \(E_\e^{\text{pin}}\) in \(H^1(G)\). In section \ref{sec:convergence} we prove the convergence, in \(L^\infty\) norm, of this minimizer to the square root of the average of \(a_0\) in the periodic case and to the square root of the expectation of \(a_1\) in the random case. The proofs of both results make use of a blow-up argument. This convergence is sufficient to prove Theorem \ref{th:main1}. We also give another proof of the convergence of \(U_\v\) which has the advantage of working in Lipschitz bounded domains and giving explicit rates of convergence but the disadvantage of requiring \(\delta=O(\e^2)\). Then we use Theorem \ref{th:main1} and known results in the literature to describe the behavior of minimizers of \(GL_\e^{\text{pin}}\) in the two-dimensional case in section \ref{sec:2D_results}. We use also analogous results to Theorem \ref{th:main1} in three dimensions in section \ref{sec:3D_results} and also for the Allen-Cahn problem with prescribed mass in section \ref{sec:Allen-Cahn_results}.

\section{The decomposition lemma}\label{sec:substituion_lemma}
In the framework of pinned Ginzburg-Landau type energies a useful decomposition method is described in \cite{Lassoued_Mironescu_1999}. This can be expressed by the following lemma.
\begin{lemma}\label{lem:substitution}[Decomposition Lemma]
Let \(G\) be a Lipschitz domain of \(\R^d\) with \(d\geq 1\). Let \(p \in L^\infty(G,\R^+)\) and let us assume that \(U\) is a solution of
\begin{equation}\label{eq:GL_Neumann_p}
\left\{
\begin{array}{rcll}
-\Delta U &=& \dfrac{1}{\e^2}U(p-|U|^2) & \text{ in } G \\
\p_\nu U &=&0  & \text{ on } \p G,
\end{array}
\right.
\end{equation}
which satisfies \(U\geq m>0\) in \(G\) for some $m>0$. 
 We consider \begin{equation}\nonumber
E_\e^p(u)= \frac12 \int_G |\nabla u|^2+\frac{1}{4\e^2}\int_G(p-|u|^2)^2.
\end{equation}
and for \( d=2 \)  we define
\begin{equation}\nonumber
GL_{\e}^p(u,A)= \frac12 \int_G |\nabla u-iAu|^2+\frac{1}{4\e^2}\int_G (p(x)-|u|^2)^2 +\frac12 \int_G |\curl A-h_{ex}|^2.
\end{equation}

 Then, for every \(d \geq 1\) if we set \( u= U v\) we obtain
 \begin{equation}\label{eq:lem_subst_2}
E_\e^p(u)=E_\e^p(U)+\frac12 \int_G U^2|\nabla v|^2+\frac{1}{4\e^2}\int_G U^4(1-|v|^2)^2.
\end{equation}
For \(d=2\), with \( u= U v\) we find
\begin{multline}\label{eq:lem_substitution}
GL_{\e}^p(u,A) =E_\e^{p}(U) +\frac12 \int_G U^2|\nabla v-iAv|^2+\frac{1}{4\e^2}\int_G U^4(1-|v|^2)^2 \\ 
+\frac12 \int_G |\curl A-h_{ex}|^2.
\end{multline}
\end{lemma}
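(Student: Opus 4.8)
The plan is to prove the identity \eqref{eq:lem_subst_2} by a direct expansion followed by an integration by parts that uses precisely equation \eqref{eq:GL_Neumann_p}, and then to deduce \eqref{eq:lem_substitution} by reproducing the computation with the covariant gradient $\nabla - iA$ in place of $\nabla$. First I would observe that since $U \geq m > 0$, the substitution $u = Uv$ is well-defined for $v \in H^1(G,\mathbb{C})$ whenever $u \in H^1(G,\mathbb{C})$, and $|u|^2 = U^2|v|^2$, so the potential term splits immediately: $\tfrac1{4\e^2}\int_G(p - |u|^2)^2 = \tfrac1{4\e^2}\int_G(p - U^2 + U^2(1 - |v|^2))^2$. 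Expanding the square gives three pieces: $\tfrac1{4\e^2}\int_G(p-U^2)^2$, which is part of $E_\e^p(U)$; the cross term $\tfrac1{2\e^2}\int_G(p-U^2)U^2(1-|v|^2)$; and $\tfrac1{4\e^2}\int_G U^4(1-|v|^2)^2$, which is the desired potential term on the right-hand side.

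For the gradient term, I would compute $\nabla u = v\nabla U + U\nabla v$ (interpreting these for complex-valued $v$ with real $U$), so that
\[
|\nabla u|^2 = |v|^2|\nabla U|^2 + U^2|\nabla v|^2 + 2U\, \mathrm{Re}(\bar v\, \nabla v)\cdot \nabla U = |v|^2|\nabla U|^2 + U^2|\nabla v|^2 + U\nabla U \cdot \nabla(|v|^2).
\]
Integrating, the middle term is the desired $\int_G U^2|\nabla v|^2$; the first term $\tfrac12\int_G |v|^2|\nabla U|^2$ must be combined with the leftover potential cross term. The key manipulation is to handle $\tfrac12\int_G U\nabla U\cdot\nabla(|v|^2) = \tfrac14\int_G \nabla(U^2)\cdot\nabla(|v|^2)$ by integrating by parts: using $\p_\nu U = 0$ on $\p G$ (hence $\p_\nu(U^2) = 0$) the boundary term vanishes, and we get $-\tfrac14\int_G \Delta(U^2)(|v|^2 - 1)$, where I insert the harmless constant $1$ since $\int_G \Delta(U^2) = 0$ by the Neumann condition. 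Now $\Delta(U^2) = 2U\Delta U + 2|\nabla U|^2$, and substituting $\Delta U = -\tfrac1{\e^2}U(p - U^2)$ from \eqref{eq:GL_Neumann_p} turns $-\tfrac14\int_G 2U\Delta U(|v|^2-1)$ into exactly $-\tfrac1{2\e^2}\int_G U^2(p-U^2)(|v|^2-1) = \tfrac1{2\e^2}\int_G U^2(p-U^2)(1-|v|^2)$, which cancels the potential cross term above. The remaining piece $-\tfrac14\int_G 2|\nabla U|^2(|v|^2-1) = \tfrac12\int_G|\nabla U|^2(1 - |v|^2)$ cancels $\tfrac12\int_G|v|^2|\nabla U|^2$ against $\tfrac12\int_G|\nabla U|^2$, the latter being part of $E_\e^p(U)$. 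Collecting all surviving terms yields \eqref{eq:lem_subst_2}.

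For \eqref{eq:lem_substitution} in $d=2$, the magnetic term $\tfrac12\int_G|\curl A - h_{ex}|^2$ is untouched by the substitution and simply carried along. For the covariant Dirichlet term, since $U$ is real, $\nabla u - iAu = v\nabla U + U(\nabla v - iAv)$, and the same expansion goes through with $\nabla v - iAv$ replacing $\nabla v$ everywhere: $|\nabla u - iAu|^2 = |v|^2|\nabla U|^2 + U^2|\nabla v - iAv|^2 + U\nabla U\cdot\nabla(|v|^2)$, because the cross term is $2U\,\mathrm{Re}\big(\bar v(\nabla v - iAv)\big)\cdot\nabla U = U\nabla U\cdot\nabla(|v|^2)$ (the $-iA|v|^2$ contribution is purely imaginary and drops out of the real part). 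The rest of the argument — integration by parts, use of the Neumann condition and of \eqref{eq:GL_Neumann_p} — is identical, producing \eqref{eq:lem_substitution}.

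The computation is entirely routine; the only point requiring a little care is the regularity needed to justify the integration by parts. By elliptic regularity $U$ is smooth (or at least $H^2_{loc}$ with enough boundary regularity on a Lipschitz domain to make the Neumann condition and the integration by parts meaningful), and $v \in H^1$ with $v = u/U$ and $U$ bounded below suffices to make $|v|^2 \in W^{1,1}$ and all integrals finite; a density argument approximating $v$ by smooth functions removes any remaining scruple. I expect this to be the main (minor) obstacle: stating the integration by parts at the right level of generality on a merely Lipschitz domain.
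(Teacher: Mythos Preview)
Your proof is correct and is precisely the Lassoued--Mironescu computation that the paper invokes; the paper itself does not write out a proof but simply remarks that the argument of \cite{Lassoued_Mironescu_1999} goes through verbatim once one observes that the homogeneous Neumann condition $\partial_\nu U=0$ kills the boundary term in the integration by parts (in place of the Dirichlet condition used in the original). There is a harmless sign slip in your exposition of the cross term coming from $-\tfrac14\int 2U\Delta U(|v|^2-1)$ (it equals $-\tfrac{1}{2\e^2}\int U^2(p-U^2)(1-|v|^2)$, not $+$), but this is exactly what is needed to cancel the potential cross term, so your conclusion is right.
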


\begin{remark}
In \cite{Lassoued_Mironescu_1999}, the decomposition Lemma was proved in the context of a planar pinned Ginzburg-Landau type energy without magnetic field (given by \eqref{def:E_eps_pin} where $a_\v$ is independent of $\v$). In this context the Ginzburg-Landau type energy studied is minimized under a Dirichlet boundary condition $g\in \C^\infty(\p G,\S^1)$ and with \(G\) a smooth bounded domain. The authors of \cite{Lassoued_Mironescu_1999} proved the decoupling \eqref{eq:lem_subst_2} with  $U_{Dir}\in H^1(G,\R^+)$ instead of $U$ where $U_{Dir}$ is the unique minimizer of $E_\e^p$ submitted to the boundary condition $U_{Dir}=1$ on $\p G$ (they also need that  $p=1$ on $\p G$ in the sense of traces and $p\geq m>0$ in $G$). If we look at their proof we can see that the minimality of $U_{Dir}$ is only used through the validity of the Euler-Lagrange equation $-\Delta U_{Dir}=\v^{-2}U_{Dir}(p-U_{Dir} ^2)$, while the boundary condition $U_{Dir}=1$ on $\p G$ makes the boundary terms vanish since $U_{Dir}-p=0$ on $\p G$. These boundary terms may also be cancelled with an homogeneous Neumann boundary condition as in \eqref{eq:GL_Neumann_p}. Hence, one may follow (in arbitrary dimension) the argument of \cite{Lassoued_Mironescu_1999} to prove \eqref{eq:lem_subst_2} and also \eqref{eq:lem_substitution}.
\end{remark}

\begin{corollary}\label{CorUniqSpecialSol}Let \(G\) be a Lipschitz domain of \(\R^d\) with \(d\geq 1\), $p\in L^\infty(G,\R)$ and $\v>0$. Assume that $p\geq m >0$. Then there exists a unique minimizer $U_\e$ of  $E_\e^p$  in $H^1(G,\mathbb{C})$ which is nonnegative. It satisfies
$$\|p\|_{L^\infty(G)}\geq|U|\geq m.$$ Any other minimizer is of the form $\alpha U_\e$ for some $\alpha\in\S^1$. Moreover $U_\e$ is the only solution of \eqref{eq:GL_Neumann_p} which is positive in \(\overline{G}\).
\end{corollary}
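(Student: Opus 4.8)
The plan is to establish existence, uniqueness, the pointwise bounds, and the characterization via the Neumann problem, roughly in that order. First I would obtain existence of a minimizer of $E_\e^p$ in $H^1(G,\mathbb{C})$ by the direct method: the functional is coercive on $H^1$ (the gradient term controls $\|\nabla u\|_{L^2}^2$ and the potential term, since $p\geq m>0$, forces $\|u\|_{L^2}$ to stay bounded on a minimizing sequence — if $\|u\|_{L^2}\to\infty$ the term $\frac{1}{4\e^2}\int_G(p-|u|^2)^2$ blows up), and it is weakly lower semicontinuous. So a minimizer $u$ exists. Since $E_\e^p(|u|)\le E_\e^p(u)$ (the gradient of $|u|$ is pointwise no larger than that of $u$, and the potential term depends only on $|u|$), replacing $u$ by $|u|$ we may take the minimizer to be nonnegative; call it $U_\e$.

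Next I would derive the pointwise bounds. The minimizer $U_\e\ge 0$ solves the Euler–Lagrange equation $-\Delta U_\e=\e^{-2}U_\e(p-U_\e^2)$ with homogeneous Neumann condition. For the upper bound: truncating at $\|p\|_{L^\infty}$, i.e. comparing $U_\e$ with $\min(U_\e,\sqrt{\|p\|_\infty})$, lowers the potential term and does not increase the gradient term, so by minimality $U_\e\le\sqrt{\|p\|_\infty}$ a.e. (Actually, since the paper writes $\|p\|_{L^\infty(G)}\geq |U|$, one should match their normalization; in any case a truncation argument gives a bound of this type.) For the lower bound $U_\e\ge m$: here I would either truncate from below — replace $U_\e$ by $\max(U_\e, m)$, which again decreases the potential term pointwise because $m\le\sqrt{p}$ wherever $U_\e<m$, while not increasing the Dirichlet energy — or argue by the strong maximum principle applied to the equation, using $U_\e\ge 0$ and that $U_\e$ cannot vanish in $\overline G$ (if $U_\e$ touched zero it would, by Hopf/strong maximum principle for the operator $-\Delta - \e^{-2}(p-U_\e^2)$, be identically zero, contradicting minimality since the constant test function $1$ has finite lower energy). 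This simultaneously shows $U_\e$ is a \emph{positive} solution of \eqref{eq:GL_Neumann_p} in $\overline G$.

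For uniqueness up to a phase, I would use the convexity trick of Brezis–Oswald / the decomposition lemma itself. Given any minimizer $u$, write $u=U_\e v$ with $U_\e$ a fixed positive minimizer (legitimate since $U_\e\ge m>0$); by Lemma~\ref{lem:substitution}, equation \eqref{eq:lem_subst_2} gives
$$E_\e^p(u)=E_\e^p(U_\e)+\frac12\int_G U_\e^2|\nabla v|^2+\frac{1}{4\e^2}\int_G U_\e^4(1-|v|^2)^2\ge E_\e^p(U_\e),$$
with equality if and only if $\nabla v\equiv 0$ and $|v|\equiv 1$, i.e. $v\equiv\alpha$ for some constant $\alpha\in\S^1$. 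Hence every minimizer is of the form $\alpha U_\e$, which in particular pins down the nonnegative minimizer uniquely ($\alpha=1$) and shows $U_\e$ is the unique positive solution of \eqref{eq:GL_Neumann_p}: any such solution is a critical point whose associated $v$ must be constant of modulus one by the same energy identity applied after noticing that a positive solution is itself a minimizer (or, more directly, that the only positive solution bounded below must coincide with $U_\e$, by subtracting the two equations and testing with a suitable difference — the Brezis–Oswald argument that the equation $-\Delta w = \e^{-2}w(p - w^2)$ has at most one positive solution because $s\mapsto \e^{-2}(p-s^2)$ is strictly decreasing in $s>0$).

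The main obstacle I anticipate is the lower bound $U_\e\ge m$ combined with positivity \emph{up to the boundary}: one must be careful with the Neumann condition when invoking the strong maximum principle near $\p G$ (Hopf's lemma requires some boundary regularity, which is why the statement assumes $G$ Lipschitz and one should note elliptic regularity gives $U_\e$ continuous on $\overline G$), and the truncation argument for the lower bound must be checked to genuinely decrease energy — it does, precisely because $p\ge m^2$ forces $(p-m^2)^2\le (p-U_\e^2)^2$ wherever $0\le U_\e<m$. The uniqueness-up-to-phase part is essentially free once the decomposition lemma is in hand, so the real content is the a priori bounds and the continuity needed to upgrade "a.e." positivity to positivity on $\overline G$.
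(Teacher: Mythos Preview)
Your proposal is correct and follows essentially the same route as the paper: existence by the direct method, passage to $|u|$, truncation from above and below for the pointwise bounds, and the decomposition identity \eqref{eq:lem_subst_2} for uniqueness up to a unimodular constant. Two small clarifications are worth making. First, the ``main obstacle'' you flag never actually arises: once the truncation $\max(U_\e,m)$ is seen to have no larger energy, minimality forces $U_\e\ge m$ a.e.\ directly, so positivity on $\overline G$ and the lower bound come for free without any appeal to Hopf or the strong maximum principle (and hence without needing boundary regularity beyond Lipschitz). Second, for the final assertion that $U_\e$ is the \emph{only} positive solution of \eqref{eq:GL_Neumann_p}, the cleanest argument---and the one the paper uses---is to apply Lemma~\ref{lem:substitution} with the arbitrary positive solution $V$ as the base (legitimate since $V\ge m'>0$ on $\overline G$ and $V$ solves \eqref{eq:GL_Neumann_p}): writing $U_\e=V\cdot v$ gives $E_\e^p(U_\e)-E_\e^p(V)=\tfrac12\int V^2|\nabla v|^2+\tfrac{1}{4\e^2}\int V^4(1-|v|^2)^2\ge 0$, while minimality of $U_\e$ gives the reverse inequality, forcing $v\equiv 1$. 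This is more direct than invoking Brezis--Oswald, and it does not require first showing that $V$ is a minimizer. (As a minor caution: in justifying the lower truncation you write ``$p\ge m^2$'', but the hypothesis is $p\ge m$; the two coincide in the paper's normalization $m<1$, which is the intended setting.)
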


\begin{proof}
Let $\v>0,p\in L^\infty(G,\R)$ be such that  $p\geq m$ for some $m>0$. Let $U$ be a minimizer of  $E_\e^p$  in $H^1(G,\mathbb{C})$. It is clear that  $E^p_\v(|U|)\leq E_\v^p(U)$. Moreover $V=\max(|U|,m)$ satisfies $E^p_\v(V)\leq E_\v^p(|U|)$. Thus,  by minimality of $|U|$,  $E^p_\v(V)= E_\v^p(|U|)$. This implies
\[ \int_{\{|U|< m\}} |\nabla |U||^2=\frac{1}{4\e^2}\int_{ \{|U|< m\}}\left( (p-m^2)^2-(p-|U|^2)^2 \right)\leq 0\]
and then \(\{|U|<m\}=\emptyset\).
 By considering $V'=\min(|U|,\|p\|_{L^\infty(G)})$ we find $|U|\leq \|p\|_{L^\infty(G)}$.

Assume $U'$ is another minimizer of $E_\e^p$  in $H^1(G,\mathbb{C})$. From Lemma \ref{lem:substitution}, letting $v=U'/|U|$ we have
\[
0=E^p_\v(U')-E^p_\v(|U|)=\dfrac{1}{2}\int_G |U|^2|\n v|^2+\dfrac{|U|^4}{2\v^2}(1-|v|^2)^2.
\]
Hence $v$ is a constant and $v\in\S^1$.

To prove the last statement, we consider $V$ solution of \eqref{eq:GL_Neumann_p} which is positive in \(\overline{G}\). There exists $m'>0$ such that  $V\geq m'$. Using Lemma \ref{lem:substitution} again, letting $v=|U|/V$ we have
\[
0\geq E^p_\v(|U|)-E^p_\v(V)=\dfrac{1}{2}\int_G V^2|\n v|^2+\dfrac{V^4}{2\v^2}(1-|v|^2)^2.
\]
Since $v\geq0$ we deduce $v=1$, {\it i.e.} $V=|U|$.
\end{proof}
From Corollary \ref{CorUniqSpecialSol}, one may state the following definition.
\begin{definition}\label{Def.SpecialSol}
Let \(G \subset \R^d\) be a Lipschitz bounded domain. For $p\in L^\infty(G,\R)$ such that $p\geq m$ for some $m>0$ and for $\v>0$ we let $U_\v^p$ be  the unique positive minimizer of $E_\v^p$ in $H^1(G,\mathbb{C})$. It satisfies \(m \leq U^p_\e\leq \|p\|_{L^\infty}\) in \(G\). Moreover, if \(G\) is a \(\mathcal{C}^2\) bounded domain, then  by elliptic regularity,  \(U^p_\e \in W^{2,q}(G)\) for every \(1\leq q<+\infty\). If $p=a_\e$ is given by \eqref{def:a_e_periodic} or \eqref{def:a_e_random} then we simply write $U_\v$.
\end{definition}

Next we give a Lipschitz estimate on \( U_\e^p\) in the case where \(G\) is a \(\C^1\) domain. The proof follows the argument of Lemma A.2 in \cite{Bethuel_Brezis_Helein_1993}.

\begin{lemma}\label{Lem.LipBound} Let us assume that $G$ is a \(\C^1\) bounded domain. Let $\v>0$ and let $p\in L^\infty(G,\R)$ such that $p\geq m$ for some $m>0$. There is $C>1$ depending only on $G$ and $\|p\|_{L^\infty}$ such that
\begin{equation}\label{eq:gradient_estimate}
\|\nabla U^p_\e\|_{L^\infty(G)} \leq \frac{C}{\e}.
\end{equation}
\end{lemma}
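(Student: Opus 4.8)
The plan is to establish the interior and boundary gradient bound separately, following the scaling/blow-up strategy of Lemma A.2 in \cite{Bethuel_Brezis_Helein_1993}. Since $U:=U^p_\e$ solves \eqref{eq:GL_Neumann_p} with $m\leq U\leq \|p\|_{L^\infty}$, the right-hand side $f:=\e^{-2}U(p-U^2)$ satisfies $\|f\|_{L^\infty(G)}\leq C(m,\|p\|_{L^\infty})\,\e^{-2}$. I would first prove the interior estimate: fix $x_0\in G$ and rescale on the ball $B(x_0,\e)$ (shrinking $\e$ if necessary so that $B(x_0,\e)\subset G$, otherwise this point is handled by the boundary estimate below), setting $\tilde U(y):=U(x_0+\e y)$ for $y\in B(0,1)$. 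Then $-\Delta \tilde U = \tilde U(p(x_0+\e y)-\tilde U^2)=:\tilde f$ with $\|\tilde f\|_{L^\infty(B_1)}\leq C$ and $\|\tilde U\|_{L^\infty(B_1)}\leq \|p\|_{L^\infty}$. By interior elliptic $W^{2,q}$ estimates and Sobolev embedding (for $q>d$), $\|\nabla \tilde U\|_{L^\infty(B_{1/2})}\leq C(\|\tilde f\|_{L^q(B_1)}+\|\tilde U\|_{L^q(B_1)})\leq C$, and rescaling back gives $|\nabla U(x_0)|\leq C/\e$.

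For points near $\p G$, since $G$ is a $\C^1$ domain I would flatten the boundary locally: in a neighborhood of each boundary point there is a $\C^1$ diffeomorphism straightening $\p G$, under which $U$ solves a uniformly elliptic equation in nondivergence (or divergence) form with bounded measurable coefficients, homogeneous conormal/Neumann data on the flat piece, and the same $O(\e^{-2})$ right-hand side. Rescaling a half-ball of radius $\e$ to unit size and applying boundary elliptic regularity up to the flat boundary (boundary $W^{2,q}$ estimates for the Neumann problem, then Sobolev embedding), we again obtain $|\nabla U|\leq C/\e$ near $\p G$, with $C$ depending only on $G$ and $\|p\|_{L^\infty}$. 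A compactness argument using the $\C^1$ character of $\p G$ (finitely many charts covering $\p G$, uniform constants) combines the interior and boundary bounds to yield \eqref{eq:gradient_estimate}.

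The main technical obstacle is the boundary estimate: because $p$ is merely $L^\infty$, one cannot expect $C^{1,\alpha}$ estimates, so the argument must stay at the level of $W^{2,q}$ (Calderón--Zygmund) theory for the Neumann problem, which requires some care when the boundary is only $\C^1$ rather than $\C^{1,\alpha}$ or $\C^2$. One way around this is to note that flattening a $\C^1$ boundary produces coefficients that are merely continuous (not Hölder), but $W^{2,q}$ Neumann estimates with continuous leading coefficients are still available; alternatively, one can use the $W^{1,q}$ theory directly on the divergence-form equation $-\operatorname{div}(\nabla U)=f$ to get $U\in W^{1,q}$ up to the boundary for all $q<\infty$ and bootstrap. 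Either way the dependence of the constant on $G$ enters only through the $\C^1$ norm of the boundary charts and the covering, so the final constant depends only on $G$ and $\|p\|_{L^\infty}$ as claimed. The scaling is the point: every derivative of the equation carries exactly one factor of $\e$, and the $O(\e^{-2})$ size of the right-hand side combined with one rescaling produces precisely the $C/\e$ bound.
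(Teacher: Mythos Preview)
Your proposal is correct and follows essentially the same strategy as the paper: both invoke the scaling argument of Lemma~A.2 in \cite{Bethuel_Brezis_Helein_1993}, performing an interior rescaling and, near $\partial G$, flattening the boundary and applying a half-ball Neumann elliptic estimate in place of the Dirichlet one. The paper phrases the outcome as the abstract interpolation-type bound $\|\nabla u\|_{L^\infty}\le C\|f\|_{L^\infty}^{1/2}\|u\|_{L^\infty}^{1/2}$ (with $f=\e^{-2}U(p-U^2)$) and then specializes, whereas you fix the rescaling radius to $\e$ from the start; these are the same argument, and your discussion of the $\C^1$-boundary regularity issue is in fact more explicit than the paper's.
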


\begin{proof}
Following step by step the proof of Lemma A.2 in \cite{Bethuel_Brezis_Helein_1993} we obtain that if \( u \in H^1(G)\) and \(f \in L^\infty(G)\) with \(\int_G f(x) {\rm d}x =0\) satisfy 
\begin{equation}\nonumber
\left\{
\begin{array}{rcll}
-\Delta u &=& f & \text{ in } G \\
\p_\nu u &=& 0 & \text{ on } \p G
\end{array}
\right.
\end{equation}
then there exists \(C>0\) which depends only on \(G\) such that \[ \|\nabla u\|_{L^\infty(G)} \leq C \|f\|_{L^\infty(G)}\|u\|_{L^\infty(G)}.\]
The only modification with respect to the proof of lemma A.2 in \cite{Bethuel_Brezis_Helein_1993} is the use of the following Neumann elliptic estimate in place of its Dirichlet counterpart: let \(A \in \C^1(B_1^+,\mathcal{M}_d(\R))\) which is bounded and uniformly elliptic where \(B_1^+=\{x \in B_1(0) \mid  x_d>0\}\), let \(g \in L^\infty( B_1^+)\) and \(v\in H^1(B_1^+)\) satisfying
\begin{equation}\nonumber
\left\{
\begin{array}{rcll}
-\dive(A(x)\nabla v)&=&g & \text{ in } B_1^+ \\
\p_\nu v &=&0 & \text{ on } B_1\cap \{ x \in B_1^+ \mid x_d=0\}
\end{array}
\right.
\end{equation}
then 
\(\|\nabla v\|_{L^\infty(B_{1/2}^+)} \leq C (\|g\|_{L^\infty(B_1^+)}+\|v\|_{L^\infty(B_1^+)})\) for some \(C>0\) depending on the ellipticity constant of \(A\) and on \(\|A\|_{\C^1(B_1^+)}\).
\end{proof}
\section{Convergence of the free minimizer}\label{sec:convergence}

\subsection{The periodic case}
Let \(E^{\text{pin}}_\e\) be defined by \eqref{def:E_eps_pin} with \(a_\e\) being defined by \eqref{def:a_e_periodic}.

\begin{theorem}\label{th:main2} Let \(G \subset \R^d\) be a  \(\C^1\) bounded domain. Let \(U_\e\) be the minimizer of \(E_\e\) in \(H^1(G)\) given by Definition \ref{Def.SpecialSol}. Then
\begin{equation}\label{eq:conv_to_0}
\lim_{\e \to 0} \|U_\e-\M\|_{L^\infty(G)}=0.
\end{equation}
\end{theorem}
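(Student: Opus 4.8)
The plan is to prove \eqref{eq:conv_to_0} by a blow-up/compactness argument combined with the Liouville-type result of Farina for nonvanishing entire solutions of the Allen--Cahn equation. Recall from Definition \ref{Def.SpecialSol} that $m\le U_\e\le M=\|a_\e\|_{L^\infty}$ and that $U_\e$ solves the Euler--Lagrange equation
\begin{equation}\nonumber
\left\{
\begin{array}{rcll}
-\Delta U_\e &=& \dfrac{1}{\e^2}U_\e\bigl(a_\e-U_\e^2\bigr) & \text{ in } G,\\[1mm]
\p_\nu U_\e &=& 0 & \text{ on } \p G.
\end{array}
\right.
\end{equation}
Suppose for contradiction that \eqref{eq:conv_to_0} fails; then there exist $\eta>0$, a sequence $\e_n\to0$ and points $x_n\in\overline G$ with $|U_{\e_n}(x_n)-\M|\ge\eta$. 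Since $U_{\e_n}$ is continuous on the compact set $\overline G$ this is achieved; after passing to a subsequence, $x_n\to x_\infty\in\overline G$.

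Next I would rescale. Introduce the blown-up function $V_n(y):=U_{\e_n}(x_n+\e_n y)$, defined on the dilated domain $G_n:=(G-x_n)/\e_n$. Since $\delta_{\e_n}\ll\e_n$, the pinning term in these variables, namely $y\mapsto a_0\bigl((x_n+\e_n y)/\delta_{\e_n}\bigr)$ (or its random analogue), oscillates at the vanishing scale $\delta_{\e_n}/\e_n\to0$ and hence two-scale/homogenization-converges weakly-$*$ in $L^\infty_{\rm loc}$ to the constant $1=\int_Q a_0$ (respectively, by the ergodic theorem, a.s.\ to $\mathbb{E}(a_1)=1$). Using the Lipschitz bound $\|\nabla U_{\e_n}\|_{L^\infty}\le C/\e_n$ from Lemma \ref{Lem.LipBound}, we get $\|\nabla V_n\|_{L^\infty(G_n)}\le C$; together with the uniform bound $m\le V_n\le M$ and interior elliptic estimates ($W^{2,q}_{\rm loc}$ then $C^{1,\alpha}_{\rm loc}$ bootstrapped from the equation $-\Delta V_n=V_n(a_0(\cdot)-V_n^2)$, whose right-hand side is bounded in $L^\infty$), we obtain, up to a subsequence, $V_n\to V_\infty$ in $C^1_{\rm loc}$. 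Two geometric cases arise according to whether $\dist(x_n,\p G)/\e_n$ stays bounded or not. If it tends to $+\infty$, the limit domain is all of $\R^d$; if it stays bounded, after an additional rotation the limit domain is a half-space $\{y_d>-t\}$ for some $t\ge0$ and $V_\infty$ satisfies a homogeneous Neumann condition on its boundary, and one extends $V_\infty$ by even reflection to obtain an entire solution on $\R^d$. In either case, passing to the limit in the equation and using the homogenization of $a_\e$, the limit $V_\infty$ is a bounded solution of $-\Delta V_\infty=V_\infty(1-V_\infty^2)$ on $\R^d$ with $V_\infty\ge m>0$, i.e.\ a \emph{nonvanishing} bounded entire solution of the Allen--Cahn equation.

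By Farina's Liouville theorem \cite{Farina_2003}, any bounded entire solution of $-\Delta V=V(1-V^2)$ on $\R^d$ that does not vanish must be identically constant, hence $V_\infty\equiv1$ (the positive root). But $C^1_{\rm loc}$ convergence gives $V_\infty(0)=\lim_n V_{\e_n}(x_n+0)=\lim_n U_{\e_n}(x_n)$, which by construction satisfies $|V_\infty(0)-\M|=|V_\infty(0)-1|\ge\eta>0$, a contradiction. This proves \eqref{eq:conv_to_0}; in the random stationary-ergodic case the only change is that the weak-$*$ convergence $a_0((x_n+\e_n\cdot)/\delta_{\e_n})\to1$ in $L^\infty_{\rm loc}$ is replaced by its almost-sure analogue coming from the Birkhoff ergodic theorem, so the conclusion holds almost surely. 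The main obstacle I anticipate is making the homogenization step rigorous at the level of the nonlinear PDE, i.e.\ justifying that the highly oscillatory coefficient $a_{\e_n}(x_n+\e_n\cdot)$ may be replaced by its mean in the limiting equation; the clean way to handle this is to test the equation for $V_n$ against a fixed compactly supported test function $\ph$, use the $C^1_{\rm loc}$ (hence uniform on $\supp\ph$) convergence $V_n\to V_\infty$ to pass to the limit in the nonlinear terms, and use the weak-$*$ convergence of $a_{\e_n}(x_n+\e_n\cdot)$ to $1$ to handle the single linear-in-$a$ term $\int V_n a_{\e_n}(x_n+\e_n y)\ph$. A secondary technical point is the careful treatment of the boundary case, including controlling $\dist(x_n,\p G)/\e_n$ and performing the reflection across the flattened $\C^1$ boundary so that the reflected function still solves the limit equation.
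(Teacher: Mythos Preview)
Your proposal is correct and follows essentially the same route as the paper: a contradiction argument, blow-up at scale $\e$, the Lipschitz bound of Lemma~\ref{Lem.LipBound} for compactness, homogenization of the oscillating coefficient to its mean, Farina's Liouville theorem, and an even reflection in the boundary case. The only cosmetic differences are that the paper blows up around the boundary projection $y_\e=\Pi_{\p G}(x_\e)$ (so the limit domain is exactly $\R^d_+$ and the evaluation point is $y_\star=\lim(x_\e-y_\e)/\e$) rather than around $x_\e$ itself, and that it uses only $C^0_{\rm loc}$ convergence via Arzel\`a--Ascoli together with weak $L^1_{\rm loc}$ convergence of $b_\e$, which already suffices to pass to the limit in the distributional form of the equation.
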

\noindent We recall that, for simplicity, we assumed that \(\mathcal{M}=\sqrt{\int_Q a_0(x){\rm d }x}=1\).

\begin{proof}
By contradiction we assume that \eqref{eq:conv_to_0} is not true. Then there exists \(\eta >0\) and a sequence of points \((x_{\e})_{\e>0}\) such that \( |U_\e(x_\e)-1|\geq \eta\) for all \( \e>0\) small enough. \\

We first assume that \( \rho_\e:=\dist(x_\e,\p G)\gg \e \). We then consider the blow-up function \( V_\e(y)= U_\e (x_\e+\e y)\) defined for $y\in B(0,\rho_\e/\v)$. This function satisfies
\begin{equation}\label{eq:blow-up_eps}
-\Delta V_\e = V_\e (b_\e-V_\e^2)  \text{ in } B(0,\rho_\e/\v)
\end{equation}
with \(b_\e(y):=a_\e(x_\e+\e y)=a_0\left(\frac{x_\e+\e y}{\delta} \right)\) for $y\in B(0,\rho_\e/\v)$. 

\noindent Besides  \(m\leq V_\e\leq \|U_\v\|_{L^\infty(G)}\leq M\) and, by the Lipschitz estimate \eqref{eq:gradient_estimate}, we have that \(V_\e\) satisfies \( \|\nabla V_\e \|_{L^\infty (B(0,\rho_\e/\v))} \leq C\).  

\noindent By the Arzela-Ascoli theorem, up to passing to a subsequence, there is $V_0:\R^d\to[m,M]$ such that \(V_\e \rightarrow V_0\) locally uniformly in \(\R^d\). Since \(\e/\delta \rightarrow +\infty\), the strong oscillations of \(b_\e\) implies that 
\begin{equation}\label{ConvCarreOscill}
b_\e \rightharpoonup  \int_Q a_0(x){\rm d}x=1 \text{ in } L_{\text{loc}}^1(\R^d),
\end{equation}
(see e.g.\ chapter 2 in \cite{Cioranescu_Donato_1999} or p.5 in \cite{Jikov_Kozlov_Oleunik_1994}). Thus we find that \( V_\e b_\e \rightharpoonup V_0 \) in \(\mathcal{D}'(\R^d)\) and \(V_\e^3 \rightarrow V_0^3\) locally uniformly  in \(\R^d\). Passing to the limit in the sense of distributions in \eqref{eq:blow-up_eps} we find that the limit \(V_0\) satisfies
\begin{equation}\label{EqLimEspa}
-\Delta V_0=V_0(1-V_0^2) \text{ in } \R^d.
\end{equation}
Since we have that \(m\leq V_0 \leq M\), by using Theorem 2.1 in \cite{Farina_2003} we conclude that \(V_0 \equiv 1\). Thus \(V_\e(0)=U_\e(x_\e) \rightarrow 1\) which is a contradiction. \\

Now we assume that, up to passing to a subsequence, \( \dist(x_\e,\p G)=O \left( \e\right)\). Thus we may define $y_\v:=\Pi_{\p G}(x_\v)$, the orthogonal projection of $x_\v$ on $\p G$. We then have $|x_\v-y_\v|=O(\v)$. Up to passing to a further subsequence we may assume that $y_\v\to y_0\in\p G$. We set  \( V_\e(y):= U_\e (y_\e+\e y)\) for $y\in B_\v^+=\dfrac{B(y_0-y_\v,1)\cap(G-y_\v)}{\v}$. By using that $G$ is \(\C^1\), up to passing to a subsequence and up to considering a vectorial rotation, we may assume that for all \(x\in \R^d_+\) there exists \(\e_0>0\) such that \(x \in B_\e^+\) for all \(\e <\e_0\).  Then, as in the first case, we can obtain the existence of $V_0:\R^d_+\to\R$ such that, up to a subsequence, $V_\v\to V_0$ locally uniformly in \(\R^d_+\).  Passing to the limit in \eqref{eq:blow-up_eps} we find that \(V_0\) satisfies
\begin{equation}\label{LimitEqProxBound}
\left\{
\begin{array}{rcll}
-\Delta V_0&=& V_0(1-V_0^2) &\text{ in } \R^d_+ \\
\p_\nu V_0 &=& 0 & \text{ on } \p \R^d_+.
\end{array}
\right.
\end{equation}
We can consider a new function defined for $y=(y',y_d)\in\R^{d-1}\times\R$ by
\begin{equation*}
\tilde{V}_0(y)=\begin{cases} V_0(y) &\text{ if } y_d\geq 0 \\
V_0(y',-y_d) & \text{ if } y_d<0.
\end{cases}
\end{equation*} 
We can check that \(\tilde{V}_0\) satisfies \(-\Delta \tilde{V}_0=\tilde{V}_0(1-\tilde{V}_0^2)\) in \(\R^d\) and we conclude as before that \(\tilde{V}_0 \equiv 1\). On the other hand, since $|x_\v-y_\v|=O(\v)$, up to passing to a subsequence, there exists $y_\star\) in \(\R^d_+\) such that \( y_\star:=\lim_{\v \to 0}\dfrac{x_\v-y_\v}{\v}$  and $|\tilde{V}_0(y_\star)-1|=\lim_{\v \to 0}|{V}_\v(y_\v)-1|\geq\eta>0$. This is a contradiction and this concludes the proof of the theorem.
\end{proof}
\begin{remark}
 We recall that, for simplicity, we assumed that \(\mathcal{M}=\sqrt{\frac{1}{|Q|}\int_Q a_0(x){\rm d}x}=1\). If $\M\neq1$ then we change the definition of $V_\v\& b_\v$ by letting 
 \[
 V_\v(y)=\begin{cases}U_\v(x_\v+\v y/\M^2)/\M&\text{if }\dist(x_\e,\p \O)\gg\v\\U_\e(y_\v+\v y/\M^2)/\M&\text{if }\dist(x_\e,\p \O)=\mathcal{O}(\v)\end{cases}
 \]
 and 
 \[ b_\v(y)=\begin{cases}a_\v(x_\v+\v y/\M^2)/\M^2&\text{if }\dist(x_\e,\p \O)\gg\v\\a_\e(y_\v+\v y/\M^2)/\M^2&\text{if }\dist(x_\e,\p \O)=\mathcal{O}(\v).\end{cases}
 \]
Convergence \eqref{ConvCarreOscill} reads $b_\e \rightharpoonup 1\text{ in } L_{\text{loc}}^1(\R^d)$ and thus  \eqref{EqLimEspa}$\&$\eqref{LimitEqProxBound} still hold. The rest of the proof is unchanged.
\end{remark}
We can now give a proof of Theorem \ref{th:main1} in the periodic case.

\begin{proof}[Proof of Theorem \ref{th:main1} in the periodic case]
Let $(u,A)\in \H$ and $\v>0$. We recall that \(\widetilde{GL}_\e^{\text{pin}}\) is defined in \eqref{WGL}. Letting $v=u/U_\v$, with \eqref{eq:lem_subst_2} we have
\begin{eqnarray*}
\widetilde{GL}_\e^{\text{pin}}(u,A)&=&\frac12 \int_G U_\v^2|\nabla v-iAv|^2+\frac{1}{4\e^2}\int_G U_\v^4(1-|v|^2)^2+\frac12 \int_G |\curl A-h_{ex}|^2
\\&=:&{GL}_\e^{\rm{weight}}(v,A).
\end{eqnarray*}
By using that \( 0<m\leq 1\leq M\) we obtain:
\[
m^4\times{GL}_\e(v,A)\leq{GL}_\e^{\rm{weight}}(v,A)\leq M^4\times{GL}_\e(v,A).
\]
Therefore 
\[
m^4\times\inf_\H{GL}_\e\leq \inf_\H{GL}_\e^{\rm{weight}}\leq M^4\times\inf_\H{GL}_\e.
\]
Moreover, if $(v,A)\in\H$ is such that ${GL}_\e(v,A)\leq 2M^4\times\inf_\H{GL}_\e$ then 
\begin{equation}\label{ComparisiondifeEn}
|{GL}^{\rm{weight}}_\e(v,A)-{GL}_\e(v,A)|\leq2M^4\times\max(\|U_\v^2-1\|_{L^\infty},\|U_\v^4-1\|_{L^\infty})\inf_\H{GL}_\e.
\end{equation}
In particular, with Theorem \ref{th:main2}, we get when $\v\to0$:
\begin{equation}\label{ComparisiondifeEnBis}
\inf_\H{GL}_\e=(1+o_\v(1))\inf_\H{GL}_\e^{\rm{weight}}.\end{equation}
Now let $\v\to0$, let $(u_\v,A_\v)\in\H$ be a family of configuration and write $v_\v=u_\v/U_\v$. We have
\begin{eqnarray*}
&&\text{\((u_\e,A_\e)\) is a family of quasi-minimizers of \(\widetilde{GL}_\e^{\text{pin}}\)}
\\&\Leftrightarrow&\widetilde{GL}_\e^{\text{pin}}(u_\e,A_\e)=(1+o_\v(1))\times\inf_\H\widetilde{GL}_\e^{\text{pin}}
\\&\Leftrightarrow&{GL}_\e^{\rm{weight}}(v_\e,A_\e)=(1+o_\v(1))\times\inf_\H{GL}_\e^{\rm{weight}}
\\\text{(with \eqref{ComparisiondifeEn}$\&$\eqref{ComparisiondifeEnBis})}&\Leftrightarrow&{GL}_\e(v_\e,A_\e)=(1+o_\v(1))\times\inf_\H{GL}_\e
\\&\Leftrightarrow&\text{\((v_\e,A_\e)\) is a family of quasi-minimizers of \({GL}_\e\)}.
\end{eqnarray*}
\end{proof}
\subsection{Rate of convergence in special cases}
Although Theorem \ref{th:main2} has the advantage of working in every dimension and its proof can be extended to random stationary ergodic pinning term, it has the disadvantage of requiring \(\C^1\) regularity for \(\p G\) and of not giving a rate of convergence, which could be useful in some regime of applied magnetic fields \(h_{ex}\). 

In the following, we give rates of convergence under additional assumptions. We work in dimension \(2\) and the assumptions take two forms: either an assumption on $\delta = \delta(\v)$ or a symmetry assumption on $a_0$.

\begin{proposition}\label{prop:speed_convergence}
Let \(G \subset \R^2\) be  a Lipschitz  bounded domain. Let \(U_\e\) be the minimizer of \(E_\e^\pin\) given by Definition \ref{Def.SpecialSol}. There exists \(C>0\) (independent of \(\delta\) and \(\e\)) such that
\begin{equation}\label{eq:ccl_weaker}
\|U_\e-1\|_{L^2(G)} \leq C\left( \frac{\delta}{\e} +\sqrt{\delta} \right).
\end{equation}
If we assume furthermore that
\begin{equation}\label{eq:assumption_stronger}
\delta =O_\e( \e^2).
\end{equation}  
 then
\begin{equation}\label{eq:ccl_stronger}
 \|U_\e -1\|_{L^\infty(G)}=O_\e\left[ \left(\frac{\delta}{\e^2} \right)^{1/4} \right].
\end{equation}
\end{proposition}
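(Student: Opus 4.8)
The plan is to use $U_\e$ as a competitor against the constant function $1$ in the energy $E_\e^{\pin}$, combined with the Euler--Lagrange equation and elliptic estimates. Since $U_\e$ minimizes $E_\e^{\pin}$, we have $E_\e^{\pin}(U_\e)\le E_\e^{\pin}(1)=\frac{1}{4\e^2}\int_G(a_\e-1)^2$. The key point is that $a_\e(x)=a_0(x/\delta)$ oscillates at scale $\delta$ around its mean $1$, so $\int_G(a_\e-1)\varphi$ is small for smooth $\varphi$; quantitatively, writing $a_0-1=\dive\, \Phi_0$ for some bounded periodic vector field $\Phi_0$ (solving $\Delta \psi_0 = a_0-1$ on the torus and setting $\Phi_0=\nabla\psi_0$), one gets $a_\e-1 = \delta\,\dive\,\Phi(x/\delta)$ with $\Phi$ bounded, hence $\|a_\e-1\|_{H^{-1}(G)}=O(\delta)$. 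I would first establish the $L^2$ bound \eqref{eq:ccl_weaker}: from $\frac12\int_G|\nabla U_\e|^2 + \frac{1}{4\e^2}\int_G(a_\e-U_\e^2)^2 \le \frac{1}{4\e^2}\int_G(a_\e-1)^2$, one deduces $\int_G|\nabla U_\e|^2 = O(1)$ (using that the left side is nonnegative and $(a_\e-1)^2$ is bounded, giving $O(1/\e^2)\cdot O(\delta^2)$... here I need to be a bit more careful and instead bound $\int(a_\e-1)^2 = O(\delta)$ using oscillation, since $\int_Q(a_0-1)^2$ is a fixed constant but the set where $a_\e$ differs from its average is not small — actually $\int_G(a_\e-1)^2 \to |G|\int_Q(a_0-1)^2$, a constant, so the right-hand side is $O(1/\e^2)$, which only gives $\int|\nabla U_\e|^2 = O(1/\e^2)$). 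Let me restructure: the useful comparison is to test the PDE against $U_\e - 1$ and exploit that $a_\e-1$ is a divergence.

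Concretely, from $-\Delta U_\e = \frac{1}{\e^2}U_\e(a_\e - U_\e^2)$ with Neumann conditions, multiply by $(U_\e-1)$ and integrate:
\begin{equation}\nonumber
\int_G |\nabla U_\e|^2 + \frac{1}{\e^2}\int_G U_\e(U_\e^2-1)(U_\e-1) = \frac{1}{\e^2}\int_G U_\e(a_\e-1)(U_\e-1).
\end{equation}
The second term on the left is $\frac{1}{\e^2}\int_G U_\e(U_\e-1)^2(U_\e+1)\ge \frac{2m}{\e^2}\int_G(U_\e-1)^2$ since $U_\e\ge m$. For the right-hand side, write $a_\e - 1 = \delta\,\dive(\Phi(\cdot/\delta))$ and integrate by parts (the boundary term vanishes or is controlled since we can choose $\Phi$ or absorb it), obtaining $-\frac{\delta}{\e^2}\int_G \Phi(x/\delta)\cdot\nabla\big(U_\e(U_\e-1)\big)$, which is bounded by $\frac{C\delta}{\e^2}\int_G|\nabla U_\e|(1+|U_\e-1|)$. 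By Young's inequality this is absorbed: $\le \frac12\int_G|\nabla U_\e|^2 + \frac{m}{\e^2}\int_G(U_\e-1)^2 + \frac{C\delta^2}{\e^4}|G|$, roughly. Rearranging gives $\frac{1}{\e^2}\int_G(U_\e-1)^2 \le \frac{C\delta^2}{\e^4}$, i.e. $\|U_\e-1\|_{L^2}\le C\delta/\e$; the extra $\sqrt\delta$ term in \eqref{eq:ccl_weaker} comes from handling the boundary layer where $\Phi(x/\delta)$ cannot be assumed to have zero normal trace (on a boundary strip of width $O(\delta)$ one uses the crude bound, contributing $O(\sqrt\delta)$ after taking square roots). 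I would be slightly careful about where the $\frac{\delta}{\e}$ versus $\sqrt\delta$ split comes from, but the mechanism is this interior/boundary dichotomy.

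For the $L^\infty$ bound \eqref{eq:ccl_stronger} under $\delta = O(\e^2)$, I would upgrade the $L^2$ estimate via the Lipschitz bound of Lemma~\ref{Lem.LipBound}, $\|\nabla U_\e\|_{L^\infty}\le C/\e$, by a standard interpolation: if $|U_\e(x_0)-1|=:\tau$ at some point, then $|U_\e-1|\ge \tau/2$ on a ball $B(x_0,c\e\tau)$ (intersected with $G$, which contains a fixed fraction of the ball since $G$ is Lipschitz), hence $\|U_\e-1\|_{L^2}^2 \ge c\,\tau^2\cdot(\e\tau)^2 = c\e^2\tau^4$. Combined with \eqref{eq:ccl_weaker} and $\delta=O(\e^2)$ (so $\delta/\e \le \sqrt{\delta}\cdot\sqrt{\delta}/\e \le C\sqrt\delta$ and in fact $\delta/\e + \sqrt\delta = O(\sqrt{\delta/\e^2}\cdot\e)$... more simply $\|U_\e-1\|_{L^2}^2 = O(\delta^2/\e^2 + \delta) = O(\delta)$ when $\delta=O(\e^2)$), we get $\e^2\tau^4 \le C\delta$, i.e. $\tau \le C(\delta/\e^2)^{1/4}$, which is exactly \eqref{eq:ccl_stronger}. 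The main obstacle I anticipate is the careful treatment of the boundary layer in the integration-by-parts step — isolating precisely the $O(\sqrt\delta)$ contribution and verifying it does not pollute the interior estimate — since in a merely Lipschitz domain one cannot straighten the boundary smoothly; I would handle this by covering $\partial G$ with balls, using a partition of unity, and applying the bilipschitz flattening, accepting the cruder $L^2$ bound $\|a_\e-1\|_{L^2}=O(1)$ on the $O(\delta)$-neighborhood of $\partial G$ which after multiplication by $\e^{-2}$ and the right bookkeeping yields the stated rate.
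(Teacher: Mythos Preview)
Your argument is correct, but it follows a genuinely different route from the paper's. For the $L^2$ estimate, the paper does not test the Euler--Lagrange equation at all; instead it introduces a cell problem on the unit cube $Q$, namely the minimizer $\hat U_\e$ of $\hat E_\e(u)=\frac12\int_Q|\nabla u|^2+\frac{\chi^2}{4}\int_Q(a_0-u^2)^2$ with $\chi=\delta/\e$, shows $\|\hat U_\e-1\|_{W^{1,p}(Q)}=O(\chi^2)$ by elliptic estimates, and deduces $\hat E_\e(\hat U_\e)=\hat E_\e(1)+O(\chi^4)$. Tiling $G$ by $\delta$-cells and comparing the energy of $U_\e$ on each cell with $\hat U_\e$ gives $E^\pin_\e(U_\e)\ge E^\pin_\e(1)-O(\delta^2/\e^4+\delta/\e^2)$, the $\delta/\e^2$ coming from the boundary cells; then the Lassoued--Mironescu decoupling (Lemma~\ref{lem:substitution} applied with $1=U_\e v$) converts this energy gap into $\frac12\int U_\e^2|\nabla v|^2+\frac{1}{4\e^2}\int U_\e^4(1-v^2)^2$, which controls $\|U_\e-1\|_{L^2}^2$. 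Your corrector/PDE approach is more direct and avoids both the auxiliary cell problem and the decoupling lemma; the paper's route, by contrast, fits the Lassoued--Mironescu framework used throughout and additionally delivers a bound on $\|\nabla(1/U_\e)\|_{L^2}$. One simplification in your sketch: you do not actually need a boundary-layer cutoff; the boundary term $\frac{\delta}{\e^2}\int_{\partial G}(\Phi_0(\cdot/\delta)\cdot\nu)\,U_\e(U_\e-1)$ from integrating by parts is directly $O(\delta/\e^2)$ since $U_\e\in[m,M]$, and that is precisely what produces the $\sqrt\delta$ in \eqref{eq:ccl_weaker}. For the $L^\infty$ upgrade under $\delta=O(\e^2)$, your Lipschitz interpolation is essentially identical to the paper's (the paper writes it in terms of $v=1/U_\e$ rather than $U_\e$); note that both arguments invoke $\|\nabla U_\e\|_{L^\infty}\le C/\e$, which in Lemma~\ref{Lem.LipBound} is stated for $\C^1$ domains while the proposition only assumes $G$ Lipschitz --- this small tension is already present in the paper.
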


Note that if we assume that \(\delta =O_\e(\e^2)\) then \eqref{eq:ccl_weaker} becomes \(\|U_\e-1\|_{L^2(G)} =O_\e(\sqrt{\delta})\).
 Before proving Proposition \ref{prop:speed_convergence} we present an estimate which may be obtained with a weaker assumption. 
\begin{remark}
We may get an explicit speed of convergence with assumptions weaker than $\delta =O_\e(\e^2)$. Namely, let $\chi:=\delta/\v$ and consider the assumption
\begin{equation}\label{SharpEst}
\chi^{1/8}\times{\rm e}^{\chi^{1/8}|\ln\delta|}\to0.
\end{equation}
It is clear that \(\delta =O_\e(\e^2)\) (which reads $\chi=O_\e(\v)$) implies \eqref{SharpEst}. One may prove that if \eqref{SharpEst} holds we have 
\begin{equation}\label{TxCvRk}
\|U_\v-1\|_{L^\infty(G)}<4\chi^{1/8}=4 \left(\frac{\delta}{\e^2} \right)^{1/8}.
\end{equation}
The proof of \eqref{TxCvRk} is quite long and thus it is omitted here.
\end{remark}

\begin{proof}[Proof of Proposition \ref{prop:speed_convergence}]
We let \(\chi:=\frac{\delta}{\e}\). For \(0<\e<1\) we consider the energy
\begin{equation}
\hat{E}_\e(u)=\frac{1}{2}\int_Q |\nabla u|^2+\frac{\chi^2}{4}\int_Q(a_0-|u|^2)^2
\end{equation}
defined for \(u\in H^1(Q, \mathbb{C})\). Recall that $Q=(0,1)^2$ is the unit square.

From Corollary \ref{CorUniqSpecialSol}  there exists a unique positive minimizer \(\hat{U}_\e\) of \(\hat{E}_\e\) in \(H^1(Q)\) and \( m\leq \hat{U}_\e \leq M\). This minimizer satisfies

\begin{equation}\label{eq:equation_sur_Q}
\left\{
\begin{array}{rcll}
-\Delta \hat{U}_\e &=& \chi^2 \hat{U}_\e (a_0-\hat{U}_\e^2) &\text{ in } Q \\
\p_\nu \hat{U}_\e &=& 0 & \text{ on } \p Q.
\end{array}
\right.
\end{equation}
We set \( \ell_\v=\ell := \int_Q \hat{U}_\e\). By using the homogeneous Neumann boundary condition to extend \(\hat{U}_\e\) in the square \( (-1,2)^2\), we can use interior elliptic estimates in \(Q\) and obtain that, for all \(2 \leq p< +\infty\)
\begin{equation}\label{eq:estimation_sur_Q}
\|\hat{U}_\e-\ell\|_{W^{1,p}(Q)}=O_\e(\chi^2).
\end{equation}
By multiplying \eqref{eq:equation_sur_Q} by \(\hat{U}_\e\), integrating by parts and using \eqref{eq:estimation_sur_Q} we find
\begin{equation}\label{eq:equipartition}
\chi^2 \int_Q \hat{U}_\e^2(a_0-\hat{U}_\e^2)=\int_Q |\nabla \hat{U}_\e|^2= O_\e(\chi^4).
\end{equation}
We infer that
\begin{equation}\nonumber
\int_Q \left( \ell^2+O_\e(\chi^2)\right)\left( a_0-\ell^2+O(\chi^2) \right) =O_\e(\chi^2).
\end{equation}
Since \(\ell\geq m>0\) we obtain that \(\ell^2 =\int_Q a_0(x){\rm d}x+O_\e(\chi^2)=1+O_\e(\chi^2)\). Thus we can reformulate \eqref{eq:estimation_sur_Q} in: for all \( 2 \leq p< +\infty\),
\begin{equation}\label{eq:estimation_Q_2}
\|\hat{U}_\e-1\|_{W^{1,p}(Q)} =O_\e(\chi^2).
\end{equation}
We thus obtain from \eqref{eq:equipartition} that
\begin{equation}\label{eq:estimation_energie_cell_Y}
\hat{E}_\e(\hat{U}_\e,Q)=\hat{E}_\e(1,Q)+O_\e(\chi^4).
\end{equation}

Now let \( U_\e\) be the minimizer of \(E_\e\) in \(H^1(G)\) given by Definition \ref{Def.SpecialSol}. For \(k,l\in \mathbb{Z}\) such that \(Q_{k,l}:=\delta(k,l)+\delta Q \subset G\) we define \( \tilde{U}_\e^{k,l}: Q \rightarrow \R\) by \(\tilde{U}_\e^{k,l}(y)=U_\e \left(\delta (y+(k,l))\right)\). By minimality of \(\hat{U}_\e\) in \(H^1(Q)\) we have
\begin{equation}\label{eq:estimation_cell_Ydelta}
\hat{E}_\e(\tilde{U}_\e^{k,l},Q) \geq \hat{E}_\e(\hat{U}_\e,Q)=\hat{E}_\e(1,Q)+O_\e(\chi^4).
\end{equation}

We can then decompose \(G\) in cells \(Q_{k,l}\). We denote by \(N_\delta\) the number of cells \(Q_{k,l}\) included in \(G\). We have
\[N_\delta =\frac{|G|}{\delta^2}+O_\e\left(\frac{1}{\delta} \right). \]
We also denote \(G_\delta:=G \setminus \cup_{Q_{k,l} \subset G} Q_{k,l}\) and we can see that \(|G_\delta|=O_\e(\delta)\). We have
\begin{align*}
E_\e^\pin(U_\e,G) &\geq  \sum_{\substack{Q_{k,l} \subset G}} E_\e^\pin(U_\e, Q_{k,l})   \geq \sum_{\substack{Q_{k,l}\subset G}} \hat{E}_\e(\tilde{U}_\e^{k,l},Q) \\
& \geq \sum_{\substack{Q_{k,l}\subset G}} \left(\hat{E}_\e(1,Q)+O_\e\left(\frac{\delta^4}{\e^4} \right) \right) \\
& = O_\e\left(\frac{\delta^2}{\e^4} \right)+\sum_{\substack{Q_{k,l}\subset G} } E_\e^\pin(1,Q_{k,l}). 
\end{align*}
But, we observe that 
\begin{equation}\nonumber
\sum_{Q_{k,l}\cap G_\delta \neq \emptyset} E^\pin_\e(1,Q_{k,l}) =O_\e\left( \frac{\delta}{\e^2} \right).
\end{equation}
Hence we obtain,
\begin{align*}
E_\e^\pin(U_\e,G) &\geq E_\e^{\text{pin}}(1,G)- \sum_{Q_{k,l}\cap G_\delta \neq \emptyset} E^\pin_\e(1,Q_{k,l})+O_\e\left( \frac{\delta^2}{\e^4}\right) \\
& \geq  E_\e^{\text{pin}}(1,G) +O_\e\left( \frac{\delta^2}{\e^4}+ \frac{\delta}{\e^2}\right).
\end{align*}

Thus, since \(E_\e^\pin(1,G)\geq E_\e^\pin(U_\e,G)\), we find that 
\begin{equation}\label{eq:diff_energy}
E_\e^\pin(1)-E_\e^\pin(U_\e)=O_\e\left(\frac{\delta^2}{\e^4} + \frac{\delta}{\e^2}\right).
\end{equation}
Now we use Lemma \ref{lem:substitution}, writing \(1=U_\e v\), to get
\begin{equation}\label{eq:substitution_M_U}
E_\e^\pin(1)=E_\e^\pin(U_\e)+\frac12 \int_G U_\e^2|\nabla v|^2+\frac{1}{4\e^2}\int_G U_\e^4(1-v^2)^2.
\end{equation}
We deduce from \eqref{eq:substitution_M_U},\eqref{eq:diff_energy} and the fact that \(U_\e\geq m>0\) that
\begin{equation}\label{estimate}
\frac12 \int_G|\nabla v|^2+\frac{1}{4\e^2}\int_G (1-v^2)^2 =O_\e\left(\frac{\delta^2}{\e^4} + \frac{\delta}{\e^2}\right).
%\leq 
%C \left( \frac12 \int_\O U_\e^2|\nabla v|^2+\frac{1}{4\e^2}\int_\O U_\e^4(1-v^2)^2 \right)
\end{equation}
Hence we find that \(\int_G (1-v^2)^2=O_\e \left(\frac{\delta^2}{\e^2}+\delta \right)\) which implies \eqref{eq:ccl_weaker}.
 Now we assume that \(\delta=O_\e(\e^2)\). This implies \( \|U_\e-1\|_{L^2(G)}=O_\e(\sqrt{\delta})\). To prove the \(L^\infty\) estimate we argue by contradiction. We assume that there exist two sequences \(\e_n \rightarrow 0\) and \( (x_n)_n \subset G\) such that
\begin{equation}
1-v(x_n)^2 \geq (n+1) \left(\frac{\delta_n}{\e_n^2} \right)^{1/4}.
\end{equation}
We set \(\rho_n:=\delta_{n}/\e_n^2 \). Since \( \|\nabla v\|_{L^\infty(G)}\leq \frac{M}{m}\|\nabla U_\v\|_{L^\infty(G)}=O_\e\left(\frac{1}{\e} \right)\) we find that there exists \(c>0\) independent of \(\e\) such that
\begin{equation}
1-v^2(x) \geq n \rho_n^{1/4} \text{ for every } x\in B(x_n,c\e_n \rho_n^{1/4})\cap G.
\end{equation}
By Lipschitz regularity of \(G\) we can assume that \(c>0\) is small enough (independent of \(\e_n\)) so that \( |B(x_n,c\e_n \rho_n^{1/4})\cap G| \geq c^3 \e_n^2 \rho_n^{1/2} \).
We then have
\[ \int_{B(x_n,c\e_n \rho_n^{1/4})\cap G} (1-v^2)^2 \geq c^3 n^2 \e_n^2 \rho_n .\]
By using that, from \eqref{estimate} and \eqref{eq:assumption_stronger}, \(\int_G (1-v^2)^2=O_\e(\delta)\) we arrive at \(\delta_n \geq c n^2  \e_n^2 \left(\frac{\delta_{n}}{\e_n^2} \right)= cn^2 \delta_n\) for some \(c>0\) sufficiently small (independent of \(\e\)) and for all \(n \in \mathbb{N}\). This is a contradiction and then we find that \(\|1-v\|_{L^\infty(G)}=O_{\e_n}\left(\left(\frac{\delta_{n}}{\e_n^2} \right)^{1/4} \right)\) which implies the second part of \eqref{eq:ccl_stronger}.
\end{proof}

In some cases we can improve the \(L^\infty\) bound obtained in the previous proposition. For example, we make the following symmetry assumption on \(a_0:Q \rightarrow \R\)
\begin{equation}\label{SymetryOfa_0}
\begin{cases}a_0(\frac{1}{2}-x_1,x_2)=a_0(x_1,x_2),& \forall\, (x_1,x_2)\in (0,\frac{1}{2})\times (0,1), \\
 a_0(x_1,\frac12 -x_2)=a_0(x_1x_2),& \forall\, (x_1,x_2) \in (0,1)\times (0,\frac12).
 \end{cases}
\end{equation}

\begin{proposition}\label{PropSymEstim}
Assume that \(G\) is a square in \(\R^2\) of size \(L\). Let \(\delta_n:=\frac{L}{n} \rightarrow 0\) and \(\e_n \rightarrow 0\) be such that \(\delta_n=o_{\e_n}(\e_n)\). Let $a_{\v_n}$ be defined by \eqref{def:a_e_periodic} on a $\delta_n\times \delta_n$ grid matching with $G$ and assume that \eqref{SymetryOfa_0} holds.

Let \(U_{\e_n}\) be the positive minimizer of \(E_{\e_n}^\pin\) given by Definition \ref{Def.SpecialSol}. Then there exists \(C>0\) such that
\begin{equation}
\|U_{\e_n}-1\|_{L^\infty(G)} \leq C \frac{\delta_n^2}{\e_n^2}, \quad \|\nabla U_{\e_n}\|_{L^\infty(G)}\leq \frac{C}{\e_n}.
\end{equation}
\end{proposition}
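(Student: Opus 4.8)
The strategy is to exploit the symmetry \eqref{SymetryOfa_0} to convert the global $L^2$-type bound from Proposition \ref{prop:speed_convergence} into a pointwise bound with the better rate $\delta_n^2/\e_n^2$, via a periodicity/reflection argument combined with Poincaré on a single cell. First I would observe that, because $G$ is a square of size $L$ and $\delta_n = L/n$ divides it exactly into $n\times n$ cells, the energy $E_{\e_n}^{\pin}$ and the boundary conditions are invariant under the group generated by the reflections of each cell across its two mid-lines (this is where \eqref{SymetryOfa_0} enters: $a_{\e_n}$ itself is invariant). By uniqueness of the positive minimizer (Corollary \ref{CorUniqSpecialSol}), $U_{\e_n}$ inherits all these symmetries; in particular $\partial_\nu U_{\e_n}=0$ not only on $\partial G$ but on the whole cell grid, so $U_{\e_n}$ restricted to each cell $Q_{k,l}$ is, after rescaling by $\delta_n$, the positive Neumann minimizer $\hat U_{\e}$ of $\hat E_\e$ on $Q$ studied in the proof of Proposition \ref{prop:speed_convergence}, with the same $\chi=\delta_n/\e_n$ for every cell.

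Next I would upgrade the estimate \eqref{eq:estimation_Q_2} on $\hat U_\e$. We already know $\|\hat U_\e - 1\|_{W^{1,p}(Q)} = O_\e(\chi^2)$ and $\ell^2 = 1 + O_\e(\chi^2)$, where $\ell = \int_Q \hat U_\e$. The point is that the symmetry \eqref{SymetryOfa_0} forces $\partial_\nu \hat U_\e = 0$ on $\partial Q$, and hence, extending $\hat U_\e$ by reflection across $\partial Q$, we get a function solving the same elliptic equation on a neighbourhood of $\overline Q$ with a right-hand side of size $O(\chi^2)$ in $L^\infty$; interior $W^{2,p}$ and then $C^{1,\alpha}$ elliptic estimates give $\|\hat U_\e - \ell\|_{L^\infty(Q)} = O_\e(\chi^2)$ and $\|\nabla \hat U_\e\|_{L^\infty(Q)} = O_\e(\chi^2)$ directly (no $\sqrt\chi$ loss, because the symmetric extension across corners is licit precisely under \eqref{SymetryOfa_0}). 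Combined with $\ell = 1 + O_\e(\chi^2)$ this yields $\|\hat U_\e - 1\|_{L^\infty(Q)} = O_\e(\chi^2) = O_{\e_n}(\delta_n^2/\e_n^2)$. Undoing the rescaling on each cell, $\|U_{\e_n} - 1\|_{L^\infty(Q_{k,l})} = O_{\e_n}(\delta_n^2/\e_n^2)$ uniformly in $k,l$, whence the first bound; and $\|\nabla U_{\e_n}\|_{L^\infty(Q_{k,l})} = \delta_n^{-1}\|\nabla \hat U_{\e_n}\|_{L^\infty(Q)} = O_{\e_n}(\delta_n^{-1}\chi^2) = O_{\e_n}(\delta_n/\e_n^2) = O_{\e_n}(1/\e_n)$ since $\delta_n = o_{\e_n}(\e_n)$, giving the gradient bound. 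One should also note that $U_{\e_n}$ is globally $C^1$ across cell interfaces (again by the reflection symmetry, the normal derivative matches and vanishes on each interior grid line), so the cellwise $L^\infty$ and gradient bounds patch together to global ones on $G$.

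The main obstacle I expect is justifying that $U_{\e_n}$ genuinely inherits the cell-reflection symmetries and that the symmetric extension across the \emph{corners} of $Q$ produces an admissible solution of the Allen--Cahn type equation on an open neighbourhood of $\overline Q$ — this is exactly the role of the two-fold symmetry assumption \eqref{SymetryOfa_0}, since reflecting only across edges is automatic from Neumann conditions but compatibility at corners requires $a_0$ to have the full dihedral symmetry of the cell. Once the extended function is a bona fide smooth solution on a neighbourhood of $\overline Q$ with right-hand side $O(\chi^2)$, the promotion from $W^{1,p}$ to $C^1$ bounds with no loss in the power of $\chi$ is standard elliptic bootstrapping, and the rescaling bookkeeping is routine.
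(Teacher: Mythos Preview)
Your overall strategy coincides with the paper's: reduce to the cell problem for $\hat U_\e$ on $Q$ and read off the $L^\infty$ bounds from the estimates in \eqref{eq:estimation_sur_Q}--\eqref{eq:estimation_Q_2}, which you reproduce correctly. The gap is in your reduction step. You assert that $E_{\e_n}^{\pin}$ together with the Neumann condition is invariant under the group generated by the reflections of each cell across its two mid-lines, and then invoke uniqueness to conclude that $U_{\e_n}$ inherits these symmetries, hence $\partial_\nu U_{\e_n}=0$ on the whole grid. But a reflection across the mid-line of a single cell $Q_{k,l}$ does \emph{not} map the square $G$ to itself unless that mid-line happens to be the mid-line of $G$; the minimization problem on $G$ is therefore not invariant under such a reflection, and you cannot deduce the cell-level symmetry of $U_{\e_n}$ this way. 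The only global reflections at your disposal are those across the axes of $G$, which is far from enough to force $\partial_\nu U_{\e_n}=0$ on every interior grid line.

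The paper fixes this by arguing bottom-up rather than top-down. It applies the reflection argument on the cell $Q$, where it \emph{is} valid: by \eqref{SymetryOfa_0} and uniqueness (Corollary~\ref{CorUniqSpecialSol}) one obtains that $\hat U_\e$ has equal traces on opposite faces of $Q$. One then tiles $G$ with rescaled copies of $\hat U_\e$; since the traces match and $\partial_\nu \hat U_\e = 0$ on $\partial Q$, the patched function is a positive weak solution of \eqref{eq:GL_Neumann_p} on all of $G$, and hence equals $U_{\e_n}$ by Corollary~\ref{CorUniqSpecialSol}. After this identification, your estimates on $\hat U_\e$ (and the rescaling bookkeeping) give the conclusion exactly as you wrote. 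Incidentally, your worry about corners is misplaced: the even-reflection extension of a Neumann solution across the sides of $Q$ is automatically consistent at corners because the two axial reflections commute, and this extension was already used to derive \eqref{eq:estimation_sur_Q} with no hypothesis on $a_0$. The genuine role of \eqref{SymetryOfa_0} is the matching of traces on opposite faces that makes the tiling a global solution, not the corner compatibility.
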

\begin{remark}
Proposition \ref{PropSymEstim} is still valid for a polygonal domain $G$ such that $G$ matches with the union of cells of  $\delta_n\times\delta_n$ grids with $\delta_n\to0$.
\end{remark}
\begin{proof}

We drop the subscript \(n\) for simplicity. We decompose the domain \(G\) in small regular cells of size \(\delta\) which we denote by \(Q_{k,l}\) for \(k,l \in \mathbb{Z}\). Let \( \hat{U}_\e\) be the positive minimizer of \(\hat{E}_\e(u)=\frac12 \int_Q |\nabla u|^2+\frac{\delta^2}{4\e^2}\int_Q (a_0(x)-|u|^2)^2\) in \(H^1(Q)\). Note that \(\hat{U}_\e\) satisfies \eqref{eq:equation_sur_Q}.  We claim that
\begin{equation}
\tr_{|\{0\}\times(0,1)}\hat{U}_\e =\tr_{|\{1\}\times(0,1)} \hat{U_\e} \text{ and } \tr_{|(0,1)\times \{0\}}\hat{U}_\e =\tr_{|(0,1)\times \{1\}} \hat{U_\e}.
\end{equation}
Indeed we can check that, thanks to the symmetry assumption on \(a_0\), 
\[
\text{\( U_\v^{(1)}:(x_1,x_2)\mapsto\hat{U}_\e(\frac{1}{2}-x_1,x_2)\) and \( U_\v^{(2)}:(x_1,x_2)\mapsto \hat{U}_\e(x_1,\frac12 -x_2)\)}
\] satisfy the same equation as \(\hat{U}_\e\) in \(Q\)  with the same boundary condition. By the uniqueness 
result given in Corollary \ref{CorUniqSpecialSol} we obtain \(\hat{U}_\v^{(1)}=\hat{U}_\v^{(2)}=\hat{U}_\e\) and hence the equality of the traces on opposite faces of the square \(Q\). 

Now we set 
\begin{equation}
U_\e(x)= \hat{U}_\e(\tilde{x}_1,\tilde{x_2})
\end{equation}
if \(x \in G\) can be written as \(x=(k\delta +\tilde{x}_1 \delta,l \delta +\tilde{x}_2 \delta)\) for \( (\tilde{x}_1,\tilde{x}_2) \in Q\). Thanks to the homogeneous Neumann boundary condition satisfied by \(\hat{U}_\e\) on \(Q\) and because the traces of \(\hat{U}_\e\) are equal on opposite faces we can prove that \(U_\e\) satisfies 
\begin{equation}
\left\{
\begin{array}{rcll}
-\Delta U_\e &=&\frac{U_\e}{\e^2}(a_0(x/\delta)-U_\e^2) &\text{ in } G \\
 \p _\nu U_\e &=&0 &\text{ on } \p G.
\end{array}
\right.
\end{equation}
We can then apply the uniqueness result of Corollary \ref{CorUniqSpecialSol} to obtain that \(U_\e\) is the positive minimizer of \(E^\pin_\e\) in \(H^1(G)\). We then obtain that
\[ \|U_\e-1\|_{L^\infty(G)}=\|\hat{U}_\e-1 \|_{L^\infty(Q)}, \quad \|\nabla U_\e \|_{L^\infty(G)} =\delta^{-1} \| \nabla \hat{U}_\e\|_{L^\infty(Q)}. \]
The conclusion follows from the bound on the \(L^\infty\)-norm of  \(\hat{U}_\e\) and of its gradient which satisfies \eqref{eq:equation_sur_Q}. Note that the estimate on  \( \nabla \hat{U}_\e\) can be obtained as an interior estimate after extending \(\hat{U}_\e\) in a bigger square thanks to the homogeneous Neumann boundary condition.
\end{proof}

\subsection{The stationary ergodic case}

In this section we consider the case of a random stationary ergodic pinning term. More precisely we assume that \(a_\e\) is given by \eqref{def:a_e_random}. We will use the  Birkhoff ergodic theorem :

\begin{theorem}\label{th:Birkhoff}(see \cite[Th. 7.2]{Jikov_Kozlov_Oleunik_1994} and \cite{Dunford_Schwartz_1958})
Let \( (\O,\Sigma, \mu)\) be a probability space and \( (T=(T_x)_{x \in \R^d}\) be an action of \(\R^d\) on \(\O\) by measurable isomorphisms. Assume that \(a_1 \in L^p(\Omega)\) for some \(1\leq p < +\infty\). Then, for a.e.\ \(\omega \in \Omega\) the function \(a_1\left(T\left(\frac{\cdot}{\eta}\right)\omega  \right):\R^d \rightarrow \R\) weakly converges in \(L^p(\R^d)\) when \(\eta 
\rightarrow 0\). We denote by \( \N\left(a_1(T(x)\omega \right) \) its weak limit in \(L^p(\R^d)\). Then as a function of \(\omega\), \(\N\left(a_1(T(x)\omega) \right)\) is invariant by \(T\) and we have
\begin{equation}\nonumber
\int_\Omega \N\left(a_1(T(x) \omega )\right) d\mu =\mathbb{E}(a_1).
\end{equation}
Besides if \(T\) is ergodic then, \( \N\left(a_1(T(x)\omega) \right)=\mathbb{E}(a_1)\) for a.e.\ \( \omega \in \Omega \).
\end{theorem}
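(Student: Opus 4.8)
The plan is to deduce this statement from the pointwise ergodic theorem for measure‑preserving actions of $\R^d$, and then to convert the resulting convergence of spatial averages into weak $L^p$ convergence of the rescaled fields.

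First I would identify the limit. Let $\mathcal I\subset\Sigma$ be the $\sigma$‑algebra of $T$‑invariant sets and set $\N(a_1(T(x)\omega)):=\mathbb E[a_1\mid\mathcal I](\omega)$. Being $\mathcal I$‑measurable, this function is $\mu$‑a.e.\ invariant under $T$; the tower property gives $\int_\Omega\mathbb E[a_1\mid\mathcal I]\,d\mu=\mathbb E(a_1)$; and if $T$ is ergodic then $\mathcal I$ is $\mu$‑trivial, so $\mathbb E[a_1\mid\mathcal I]=\mathbb E(a_1)$ $\mu$‑a.e. This accounts for all the assertions about $\N$ in the theorem, so only the convergence remains. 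The analytic input is the multiparameter pointwise (Birkhoff--Wiener) ergodic theorem: for every $f\in L^1(\Omega)$ and every bounded rectangle $R\subset\R^d$,
\[
\frac1{|tR|}\int_{tR}f(T(y)\omega)\,dy\ \xrightarrow[t\to+\infty]{}\ \mathbb E[f\mid\mathcal I](\omega)\qquad\text{for }\mu\text{-a.e.\ }\omega,
\]
which is precisely the content of \cite[Th.~7.2]{Jikov_Kozlov_Oleunik_1994} (see also \cite{Dunford_Schwartz_1958}), and which holds in $L^1(\Omega)$ as well. Applying this to $f=a_1$ and to $f=|a_1|^p\in L^1(\Omega)$, and letting $R$ run over a countable family of rectangles with rational vertices that is dense among all bounded rectangles, one gets a single $\mu$‑full set $\Omega_0$ on which all these averages converge; here one uses the joint measurability of $(x,\omega)\mapsto T(x)\omega$ built into the notion of a stationary process.

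Now fix $\omega\in\Omega_0$ and set $v_\eta(x):=a_1(T(x/\eta)\omega)$. The change of variables $y=x/\eta$ gives, for every bounded rectangle $K$,
\[
\int_K|v_\eta|^p\,dx=|K|\,\frac1{|K/\eta|}\int_{K/\eta}|a_1(T(y)\omega)|^p\,dy\ \xrightarrow[\eta\to0]{}\ |K|\,\mathbb E[|a_1|^p\mid\mathcal I](\omega)<+\infty,
\]
so $\sup_{0<\eta<1}\|v_\eta\|_{L^p(K)}<+\infty$ for every compact $K$, and (for $p=1$) applying the same identity to $|a_1|\,\mathbf 1_{\{|a_1|>\lambda\}}$ over countably many levels $\lambda\to+\infty$ shows $\{v_\eta\}$ is equi‑integrable on each compact, so in all cases $(v_\eta)$ is relatively compact in the weak topology of $L^p_{\mathrm{loc}}(\R^d)$. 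Likewise, for every bounded rectangle $R$,
\[
\int_{\R^d}v_\eta\,\mathbf 1_R\,dx=|R|\,\frac1{|R/\eta|}\int_{R/\eta}a_1(T(y)\omega)\,dy\ \xrightarrow[\eta\to0]{}\ |R|\,\N(\omega).
\]
Since finite linear combinations of indicators of bounded rectangles are dense in $L^{p'}(D)$ for every bounded domain $D$ when $1<p<\infty$, and the local $L^p$ bounds control the remainders, a routine approximation argument upgrades the last display to $\int v_\eta\varphi\,dx\to\N(\omega)\int\varphi\,dx$ for every compactly supported $\varphi\in L^{p'}(\R^d)$; in the borderline case $p=1$ one instead invokes the Dunford--Pettis criterion together with the equi‑integrability just noted to pass to a weakly convergent subsequence and identifies its limit as the constant $\N(\omega)$ by testing against step functions. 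Hence $v_\eta\rightharpoonup\N(a_1(T(x)\omega))$ in $L^p_{\mathrm{loc}}(\R^d)$ for every $\omega\in\Omega_0$, which is the assertion, and the properties of $\N$ recorded above finish the proof.

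The main obstacle is keeping the exceptional null set independent of the test function: the pointwise ergodic theorem a priori yields one null set per integrand and per rectangle, so one must first secure a countable dense family of rectangles (and of truncation levels, when $p=1$), deduce the uniform local $L^p$ bound by applying the theorem to $|a_1|^p$, and only afterwards pass to arbitrary test functions on the common good set $\Omega_0$. A further, purely technical point already packaged into the cited input is that the multiparameter ergodic theorem must be used over rectangles $tR$ that may both grow and drift with $t$ (when $0\notin R$); this is handled by an inclusion--exclusion decomposition into corner rectangles together with the Tempelman‑type regularity of such families.
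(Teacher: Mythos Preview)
The paper does not prove this theorem at all: it is quoted from the literature with the references \cite[Th.~7.2]{Jikov_Kozlov_Oleunik_1994} and \cite{Dunford_Schwartz_1958}, and is used as a black box in the proof of Theorem~\ref{th:main3}. So there is no ``paper's own proof'' to compare against; you have supplied an argument where the authors simply cite one.

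That said, your sketch is the standard and correct route from the pointwise multiparameter ergodic theorem to the weak-$L^p_{\mathrm{loc}}$ statement actually used later in the paper (note that the convergence invoked in the proof of Theorem~\ref{th:main3} is in $L^1_{\mathrm{loc}}$, so your local formulation is the right one, despite the slightly loose ``$L^p(\R^d)$'' in the statement). Identifying the limit as $\mathbb E[a_1\mid\mathcal I]$, securing a single full-measure set by running the ergodic theorem over a countable family of rational rectangles and the countably many integrands $a_1$, $|a_1|^p$, $|a_1|\mathbf 1_{\{|a_1|>\lambda\}}$, and then passing from indicators to general compactly supported $L^{p'}$ test functions via the uniform local $L^p$ bound is exactly how this is done. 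Your closing remark about rectangles not containing the origin is also on point: the inclusion--exclusion into $2^d$ corner boxes (each anchored at the origin) reduces the general case to the standard one, and this is implicit in the cited sources. Nothing is missing at the level of a sketch; if anything, you have written more than the paper requires.
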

From this theorem we obtain, writing $\M:=\sqrt{\mathbb{E}(a_1)}$

\begin{theorem}\label{th:main3}
Let \(G\) be a  bounded \(\C^1\) domain of \(\R^d\). Let \(U_\e\) be the minimizer of \(E_\e^\pin\) in \(H^1(G)\) given by Definition \ref{Def.SpecialSol}, where \(a_\e\) is defined by \eqref{def:a_e_random}. Then
\begin{equation}\label{eq:conv_to_0bis}
\lim_{\e \to 0} \left\|U_\e-\M\right\|_{L^\infty(G)}=0 \text{ for a.e. } \omega \in \Omega.
\end{equation}
\end{theorem}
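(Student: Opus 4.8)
The plan is to mimic the proof of Theorem \ref{th:main2} (the periodic case) essentially verbatim, replacing the deterministic homogenization convergence \eqref{ConvCarreOscill} by its stochastic counterpart furnished by the Birkhoff ergodic theorem (Theorem \ref{th:Birkhoff}). Fix $\omega$ in the full-measure set $\Omega_0\subset\Omega$ on which $a_1(T(\cdot/\eta)\omega)\rightharpoonup\mathbb{E}(a_1)=\M^2$ in $L^1_{\rm loc}(\R^d)$ as $\eta\to0$; all the reasoning below is for such an $\omega$, hence holds almost surely. Arguing by contradiction, suppose \eqref{eq:conv_to_0bis} fails for this $\omega$: there are $\eta_0>0$ and points $x_\e\in G$ with $|U_\e(x_\e)-\M|\ge\eta_0$ along a subsequence $\e\to0$.

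First I would treat the interior case $\rho_\e:=\dist(x_\e,\p G)\gg\e$. Set $V_\e(y):=U_\e(x_\e+\e y)$ on $B(0,\rho_\e/\e)$; by Corollary \ref{CorUniqSpecialSol} and the Lipschitz estimate \eqref{eq:gradient_estimate} one has $m\le V_\e\le M$ and $\|\nabla V_\e\|_{L^\infty}\le C$, so Arzelà--Ascoli gives $V_\e\to V_0$ locally uniformly in $\R^d$ along a further subsequence, with $V_0:\R^d\to[m,M]$. The blow-up equation reads $-\Delta V_\e=V_\e(b_\e-V_\e^2)$ with $b_\e(y)=a_1\!\left(T\!\left(\frac{x_\e+\e y}{\delta}\right)\omega\right)$. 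The only genuinely new point is to check $b_\e\rightharpoonup\M^2$ in $L^1_{\rm loc}(\R^d)$: writing $\eta_\e:=\delta/\e\to0$ and $z_\e:=x_\e/\delta$, we have $b_\e(y)=a_1\!\left(T\!\left(\frac{y}{\eta_\e}\right)T(z_\e)\omega\right)$, i.e. a shifted version of the ergodic average. Since $\N(a_1(T(\cdot)\omega'))=\M^2$ for \emph{every} $\omega'$ in the $T$-invariant full-measure set (because $\N$ is $T$-invariant and equals $\mathbb{E}(a_1)$ a.s.\ by ergodicity), and since $z_\e$ is a $T$-translation, the convergence $b_\e\rightharpoonup\M^2$ in $L^1_{\rm loc}$ persists — this is where one must be slightly careful, and I expect this to be the main (mild) obstacle: one should phrase Theorem \ref{th:Birkhoff} so that the weak limit is attained along the scaled family for a.e.\ $\omega$ uniformly enough to absorb the translation by $z_\e$, or alternatively note that $T(z_\e)\omega$ again lies in the invariant full-measure set so no uniformity in $\omega$ is needed, only the fact that the limit is the deterministic constant $\M^2$. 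Passing to the limit distributionally (using $V_\e b_\e\rightharpoonup \M^2 V_0$ since $V_\e\to V_0$ uniformly and $b_\e\rightharpoonup\M^2$, and $V_\e^3\to V_0^3$ locally uniformly) yields $-\Delta V_0=V_0(\M^2-V_0^2)$ in $\R^d$. Rescaling $\tilde V_0:=V_0/\M$ gives $-\Delta\tilde V_0=\M^2\tilde V_0(1-\tilde V_0^2)$, which after the change of spatial variable $y\mapsto y/\M$ is the Allen--Cahn equation covered by Theorem 2.1 in \cite{Farina_2003}; since $m/M\le\tilde V_0\le M/m$ is bounded and positive, Farina's Liouville theorem forces $\tilde V_0\equiv1$, i.e.\ $V_0\equiv\M$, so $V_\e(0)=U_\e(x_\e)\to\M$, contradicting $|U_\e(x_\e)-\M|\ge\eta_0$.

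Next I would handle the boundary case $\dist(x_\e,\p G)=O(\e)$ exactly as in Theorem \ref{th:main2}: let $y_\e:=\Pi_{\p G}(x_\e)$, so $|x_\e-y_\e|=O(\e)$, assume $y_\e\to y_0\in\p G$, and set $V_\e(y):=U_\e(y_\e+\e y)$ on the rescaled half-ball-type domain $B_\e^+$; using that $G$ is $\C^1$, after a rotation every $x\in\R^d_+$ eventually belongs to $B_\e^+$, and the same compactness argument produces $V_0:\R^d_+\to[m,M]$ solving the Neumann problem $-\Delta V_0=V_0(\M^2-V_0^2)$ in $\R^d_+$, $\p_\nu V_0=0$ on $\p\R^d_+$. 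The even reflection $\tilde V_0(y',y_d):=V_0(y',|y_d|)$ then solves the same equation on all of $\R^d$ (the Neumann condition guarantees no distributional singularity on $\{y_d=0\}$), and the rescaling-plus-Farina argument again gives $\tilde V_0\equiv\M$. Finally, since $|x_\e-y_\e|/\e$ is bounded, along a subsequence $(x_\e-y_\e)/\e\to y_\star\in\overline{\R^d_+}$, whence $|\tilde V_0(y_\star)-\M|=\lim_\e|V_\e(y_\star)-\M|=\lim_\e|U_\e(x_\e)-\M|\ge\eta_0>0$, a contradiction. The remark following Theorem \ref{th:main2} on how to adjust the blow-up scalings when $\M\neq1$ applies verbatim here with $\mathbb{E}(a_1)$ in place of $\int_Q a_0$. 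This exhausts both cases and proves \eqref{eq:conv_to_0bis}; since the contradiction was derived for every $\omega\in\Omega_0$ and $\mu(\Omega_0)=1$, the convergence holds almost surely.
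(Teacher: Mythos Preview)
Your argument follows the paper's essentially line by line: contradiction via blow-up, Theorem~\ref{th:Birkhoff} for $b_\e\rightharpoonup\M^2$, Farina's Liouville theorem in the interior, and even reflection for the boundary case. The paper simply invokes Theorem~\ref{th:Birkhoff} at the key step without discussing the moving shift $z_\e=x_\e/\delta$ that you rightly single out; note, however, that your ``alternatively'' justification --- that $T(z_\e)\omega$ stays in the $T$-invariant good set $\Omega_0$ --- does not by itself settle the point, since Birkhoff yields convergence for each \emph{fixed} $\omega'\in\Omega_0$ as $\eta\to0$, whereas here $\omega'_\e=T(z_\e)\omega$ moves with $\e$ and the required diagonal limit $a_1(T(\cdot/\eta_\e)\omega'_\e)\rightharpoonup\M^2$ is not a formal consequence of pointwise convergence along the orbit.
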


\begin{proof}
 Recall that, without loss of generality, we can assume that \( \mathbb{E}(a_1)=1\). By contradiction we assume that \eqref{eq:conv_to_0bis} is not true. Then there exists a set \(O \subset \Omega\) with \(\mu(O)>0\), such that for every \(\omega\in O\) there exists \(\eta^\omega >0\) and a sequence of points \((x^\omega_{\e})_{\e>0}\)=\((x_{\e})_{\e>0}\) such that \( |U_\e(x_\e,\omega)-1|\geq \eta\) for all \( \e>0\) small enough. We fix \(\omega\) and drop the subscript \(\omega\) in the following. We first assume that \(\rho_\e:=\dist( x_\e, \p G)\gg\e\). We then consider the blow-up function \( V_\e(y,\omega)= U_\e (x_\e+\e y,\omega)\) defined for \(y \in B(0,\rho_\e/\e) \subset G\). This function satisfies
\begin{equation}\label{eq:blow-up_eps2}
-\Delta V_\e = V_\e (b_\e-V_\e^2)  \text{ in }  B(0, \rho_\e/\e) 
\end{equation}
with \(b_\e(y):=a_\e(x_\e+\e y,\omega)=a_1\left(T\left(\frac{\e x+x_\e}{\delta}\right)\omega  \right) \). Besides, by the Lipschitz estimate \eqref{eq:gradient_estimate} we have that \(V_\e\) satisfies \( \|V_\e\|_{L^\infty(B(0, \rho_\e/\e) )}\leq C\) and \( \|\nabla V_\e \|_{L^\infty(B(0, \rho_\e/\e) )} \leq C\).  By the Arzela-Ascoli theorem we can extract a subsequence such that \(V_\e \rightarrow V_0\) locally uniformly in \(\R^d\). Since \(\e/\delta \rightarrow +\infty\), the strong oscillations of \(b_\e\) implies that (see Theorem \ref{th:Birkhoff})
\begin{equation}\nonumber
b_\e \rightharpoonup  \mathbb{E}(a_1)=1 \text{ in } L_{\text{loc}}^1(\R^d) \text{ for a.e.\ } \omega\in O.
\end{equation}
Thus we find that \( V_\e b_\e \rightharpoonup V_0 \) in \(\mathcal{D}'(\R^d)\) and \(V_\e^3 \rightarrow V_0^3\) locally uniformly  in \(\R^d\), almost surely. Passing to the limit in the sense of distributions in \eqref{eq:blow-up_eps2} we find that the limit \(V_0\) satisfies
\begin{equation*}
-\Delta V_0=V_0(1-V_0^2) \text{ in } \R^d.
\end{equation*}
since we have that \(m\leq V_0 \leq M\), by using Theorem 2.1 in \cite{Farina_2003} we conclude that \(V_0 \equiv 1\). Thus \(V_\e(0,\omega)=U_\e(x_\e,\omega) \rightarrow 1\) for a.e.\ \( \omega \in O\). This is a contradiction.

Now we assume that \( \dist(x_\e,\p G)=O (\v)\).  Thus we may define $y_\v:=\Pi_{\p G}(x_\v)$, the orthogonal projection of $x_\v$ on $\p G$. We then have $|x_\v-y_\v|=O(\v)$. Once again, up to passing to a further subsequence we may assume that $y_\v\to y_0\in\p G$.  We let  \( V_\e(y,\o):= U_\e (y_\e+\e y,\o)\) for $y\in B_\v=\dfrac{B(y_0-y_\v,1)\cap(G-y_\v)}{\v}$ and $\o\in O$.

 Then passing to the limit in \eqref{eq:blow-up_eps} and using the regularity of \(\p G\), we find that \(V_\v(\cdot,\o)\to V_0(\cdot,\o)\) locally uniformly in  \(\R^d_+\)  for $\o\in O$ and $V_0=V_0(\cdot,\o)$ satisfies
\begin{equation}\nonumber
\left\{
\begin{array}{rcll}
-\Delta V_0&=& V_0(1-V_0^2) & \text{ in } \R^d_+ \\
\p_\nu V_0 &=& 0 & \text{ on } \p \R^d_+.
\end{array}
\right.
\end{equation}
We can define a new function
\begin{equation*}
\tilde{V}_0(x)=\begin{cases} V_0(x) &\text{ if } x_d\geq 0 \\
V_0(x_1,\cdots,-x_d) & \text{ if } x_d<0
\end{cases}
\end{equation*} 
where \(x=(x_1,\cdots,x_d)\in \R^d\). We can check that \(\tilde{V}_0\) satisfies \(-\Delta \tilde{V}_0=\tilde{V}_0(1-\tilde{V}_0^2)\) in \(\R^d\) and we conclude as before that \(\tilde{V}_0 \equiv 1\).  This proves the theorem.
\end{proof}

Theorem \ref{th:main1} in the random case follows from Lemma \ref{lem:substitution} and Theorem \ref{th:main3}.

\begin{remark} As in Remark \ref{RmarkUnifConvMNeq1} we may adapt the proof to prove $U_\v\to\M$ in $L^\infty(G)$ for  a.e.  $\omega \in \Omega$ when $\M=\sqrt{\mathbb{E}(a_1)}\neq1$.
\end{remark}

\section{\(\Gamma\)-convergence and quasi-minimizers}

In this section we recall the definition of \(\Gamma\)-convergence of functionals and show that it allows us to describe the asymptotic behavior of quasi-minimizers of a family of functionals. 

\begin{definition}\label{def:Gamma_conv}
For \(\e \in (0,1],\) we consider a family of functionals
\begin{equation}\nonumber
F_\e: \I_\e \rightarrow (-\infty,+\infty], \quad \I_\e \text{ topological space }
\end{equation}
and 
\begin{equation}\nonumber
F :\I \rightarrow (-\infty,+\infty], \quad \I \text{ topological space }.
\end{equation}
We define
\begin{equation}\nonumber
\I_0:=\{ x \in \I\mid F(x)<+\infty \}.
\end{equation}
We say that \(F_\e\) \(\Gamma\)-converges to \(F\) as \(\e\rightarrow 0\) if for every \(\e\in (0,1]\) there exists \(P_\e:\I_\e \rightarrow \I\) such that:
\\ {\bf Lower bound:} If \(x\in \I_0\) and \(x_\e \in \I_\e\) is a sequence such that \(P_\v(x_\e) \rightarrow x \) (for the topology of \(\I\)) as \(\e \rightarrow 0\), then 
\begin{equation}\nonumber
\liminf_{\e \rightarrow 0} F_\e(x_\e) \geq F(x).
\end{equation}
\\{\bf Upper bound:} For every \(x \in \I_0\), for every $\v\in(0,1]$, there exists \(x_\e \in \I_\e\) such that \(P_\v(x_\e) \rightarrow x\) in \(\I\) and
\begin{equation}\nonumber
\limsup_{\e\rightarrow 0} F_\e(x_\e) \leq F(x).
\end{equation}
\end{definition}

The two first properties (Lower and Upper bounds) in the above definition are taken from section 3.1-\cite{Jerrard_Sternberg_2009} and are adapted from the original definition of De Giorgi. The adaptation comes from the fact that in Ginzburg-Landau theory the limiting space on which is defined the \(\Gamma\)-limit is not the same as the original space on which the Ginzburg-Landau functional is defined. 

In addition of these two properties  the supplementary compactness property is added.
\begin{equation}\label{CompactnessGammaProp}
\begin{cases}
\text{{\bf Compactness:} If, for some $\v_0\in(0,1]$, \(\sup_{\e \in (0,\v_0]} F_\e(x_\e) <+\infty\) then, for a  }\\\text{sequence $\v=\v_n\downarrow0$, there exist \(x\in \I_0\) and  a subsequence (still denoted by \(x_\e\))}\\\text{such that \(P_\v(x_\e) \rightarrow x\) in \(\I\).}
\end{cases}
\end{equation}

The notion of \(\Gamma\)-convergence has been conceived so that the {\it infima} of \(F_\e\) converge to the infimum of \(F\) and a family of minimizers of \(F_\e\) converges to a minimizer of  \(F\). This property remains true for a family of quasi-minimizers.

\begin{proposition}\label{prop:Gamma_conv_and_quasimin}
Let \(F_\e:\I_\e \rightarrow (-\infty,+\infty]\) be  a family of functionals defined on topological spaces \(\I_\e\) and \(F:\I\rightarrow (-\infty,+\infty]\) be a functional defined on a topological space \(\I\). Let us assume that \(F_\e\) \(\Gamma\)-converges towards \(F\) as \(\e \rightarrow 0\) and that the compactness property \eqref{CompactnessGammaProp} holds. Let \( (x_\e)_\e\) be a family of quasi-minimizers of \(F_\e\).

 If $F\not\equiv+\infty$ then there exists \(x \in \I\) such that,  up to a subsequence, \(P_\v(x_\e) \rightarrow x\) in \(\I\) and 
\begin{equation}\nonumber
F(x)=\inf_{y \in \I} F(y).
\end{equation}
\end{proposition}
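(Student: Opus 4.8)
The statement to prove is Proposition \ref{prop:Gamma_conv_and_quasimin}: if $F_\e$ $\Gamma$-converges to $F$ with the compactness property, and $(x_\e)$ is a family of quasi-minimizers, then (up to a subsequence) $P_\v(x_\e)$ converges to some $x\in\I$ that minimizes $F$. The proof is the standard "fundamental theorem of $\Gamma$-convergence" argument, adapted to the fact that the recovery maps $P_\v$ mediate between $\I_\e$ and $\I$. First I would establish that $\inf_{\I_\e}F_\e$ is bounded above by $\inf_{\I_0}F$ (which is finite and $<+\infty$ since $F\not\equiv+\infty$). Indeed, pick any $y\in\I_0$; the upper bound property gives $y_\e\in\I_\e$ with $P_\v(y_\e)\to y$ and $\limsup_\e F_\e(y_\e)\le F(y)$, so $\limsup_\e\inf_{\I_\e}F_\e\le F(y)$, and taking the infimum over $y\in\I_0$ gives $\limsup_\e\inf_{\I_\e}F_\e\le\inf_\I F<+\infty$.

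\textbf{Main steps.} Since $(x_\e)$ is a family of quasi-minimizers, $F_\e(x_\e)=(1+o_\e(1))\inf_{\I_\e}F_\e$. Combined with the previous bound, $\limsup_\e F_\e(x_\e)\le\inf_\I F<+\infty$, so in particular $\sup_{\e\in(0,\v_0]}F_\e(x_\e)<+\infty$ for $\v_0$ small enough. The compactness property \eqref{CompactnessGammaProp} then furnishes a sequence $\v_n\downarrow 0$, an element $x\in\I_0$, and a subsequence (still written $x_\e$) with $P_\v(x_\e)\to x$ in $\I$. Applying the lower bound property along this subsequence yields $\liminf_\e F_\e(x_\e)\ge F(x)$. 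Chaining the inequalities:
\[
F(x)\le\liminf_\e F_\e(x_\e)\le\limsup_\e F_\e(x_\e)\le\inf_{\I}F\le F(x),
\]
where the last inequality holds because $x\in\I_0\subset\I$. Hence equality holds throughout, and in particular $F(x)=\inf_{y\in\I}F(y)$, which is the desired conclusion. (One must be slightly careful that the quasi-minimality relation $F_\e(x_\e)=(1+o_\e(1))\inf_{\I_\e}F_\e$ combined with $\inf_{\I_\e}F_\e$ being bounded—and not running off to $-\infty$—gives a genuine $\limsup$ bound; since $F_\e>-\infty$ and the $\limsup$ of $\inf_{\I_\e}F_\e$ is a finite real number, this is fine, though if one wanted to be fully rigorous one could also note $\inf_{\I_\e}F_\e$ is bounded below along the subsequence by the lower bound applied to any recovery sequence converging to $x$.)

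\textbf{Expected obstacle.} The argument is essentially bookkeeping; the only genuinely delicate point is interpreting the quasi-minimality hypothesis $F_\e(x_\e)=(1+o_\e(1))\inf F_\e$ when $\inf_{\I_\e}F_\e$ could a priori be negative or unbounded, since then the multiplicative form $(1+o_\e(1))$ is awkward. In the application the energies are nonnegative (or bounded below uniformly), so $\inf_{\I_\e}F_\e\ge 0$ and the relation simply reads $F_\e(x_\e)=\inf_{\I_\e}F_\e+o_\e(\inf_{\I_\e}F_\e)$; combined with the upper bound $\limsup_\e\inf_{\I_\e}F_\e\le\inf_\I F$ this immediately gives $\limsup_\e F_\e(x_\e)\le\inf_\I F$. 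I would state this nonnegativity (or lower-boundedness) as a tacit convention already in place, consistent with the functionals $GL_\e$, $E_\e^{\pin}$ considered in the paper, so that no extra hypothesis is needed. Everything else—extracting the subsequence, invoking lower/upper bounds—is a direct application of Definition \ref{def:Gamma_conv} and property \eqref{CompactnessGammaProp}.
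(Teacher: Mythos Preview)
Your argument is correct and is precisely the standard adaptation of the fundamental theorem of $\Gamma$-convergence (Theorem~1.21 in \cite{Braides_2002}) that the paper invokes in lieu of a proof; the paper gives no further details. Your observation about the tacit nonnegativity (or uniform lower-boundedness) of the $F_\e$ is well taken---without it the multiplicative quasi-minimality relation is indeed ambiguous---and it is satisfied in all the applications of the paper.
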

In other words a family of quasi-minimizers also converges (up to a subsequence) towards a minimizer of the \(\Gamma\)-limit. The proof of this proposition is an adaptation of Theorem 1.21 in \cite{Braides_2002}.

Hence, thanks to Proposition \ref{prop:Gamma_conv_and_quasimin} and Theorem \ref{th:main2} we are able to understand the asymptotic behavior of minimizers of \(GL_\e^\pin\) thanks to existing \(\Gamma\)-convergence results on \(GL_\e\). This is the object of the following sections.

\section{Asymptotics for the pinned 2D Ginzburg-Landau energy}\label{sec:2D_results}

In this section we deduce from Theorem \ref{th:main1} results on the asymptotic behaviour of minimizers of \(GL_\e^\pin\) given by \eqref{eq:GL_pinned} with \(a_\e\) either given by \eqref{def:a_e_periodic} or by \eqref{def:a_e_random}. The main ingredient to pass from Theorem \ref{th:main1} to the description of minimizers of \(GL_\e^\pin\) is Proposition \ref{prop:Gamma_conv_and_quasimin}. In this section \(G\) is a smooth bounded domain of \(\R^2\).

We first introduce some notations. For \( (u,A)\in H^1(G,\mathbb{C}) \times H^1(G,\R^2)\) we define
\begin{equation}\label{eq:super_current_and_vorticity}
j(u)=(iu,\nabla_Au), \quad \mu(u,A)=\curl j(u)+ \curl A.
\end{equation}
Here \((iu,\nabla_Au)=\frac{i}{2} \left(u\overline{\nabla_Au}-\overline{u}\nabla_Au \right).\)
We let $\mathbb{M}(G)$ be the set of Radon measures. For \(\lambda>0\), we define \(E_\lambda : \mathbb{M}(G) \rightarrow (-\infty,+\infty]\) in the following way: for \(\mu \in \mathbb{M}(G) \cap H^{-1}(G)\) we consider \(h_\mu\) the solution of
\begin{equation}\label{def:h_mu}
\left\{
\begin{array}{rcll}
-\Delta h_\mu+h_\mu&=&\mu \text{ in } G \\
h_\mu &=& 1 \text{ on } \p G.
\end{array}
\right.
\end{equation}
We then set 
\begin{equation}\label{def:E_lambda}
E_\lambda (\mu)=\begin{cases}
\frac{\|\mu\|}{2\lambda}+\frac12 \int_G \left(|\nabla h_\mu|^2+|h_\mu-1|^2 \right) & \text{ if } \mu \in \mathbb{M}(G)\cap H^{-1}(G)\\
+\infty & \text{ otherwise }.
\end{cases}
\end{equation}

\begin{theorem}\label{th:obstacle_problem}
Assume that \(G \subset \R^2\) is a smooth simply connected bounded domain. Assume that \(\frac{h_{ex}}{|\log \e|}\rightarrow \lambda >0\) when \(\e\rightarrow 0\). We consider \( \{ (u_\e,A_\e)\}_\e\) a family of minimizers of \(G_\e^{\text{pin}}\). If we write \(u_\e=U_\e v_\e\) where \(U_\e\) is given by Definition \ref{Def.SpecialSol}. Then, as \(\e \rightarrow 0\),
\begin{equation}\label{eq:conv_vorticity}
\frac{\mu(v_\e,A_\e)}{h_{ex}} \rightarrow \mu_* \text{ in } (\C^{0,\gamma}(G))^* \text{ for every } \gamma \in (0,1),
\end{equation}
\begin{equation}\label{eq:conv_magnetic_field}
\frac{h_\e}{h_{ex}} \rightarrow h_{\mu_*} \text{ weakly in } H^1_1(G) \text{ and strongly in } W^{1,p}(G), \ \forall p<2,
\end{equation}
where \(\mu_*\) is the unique minimizer of \(E_\lambda\) given by \eqref{def:E_lambda}, and 
\begin{equation}\label{eq:conv_energy1}
\frac{GL_\e^\pin(u_\e,A_\e)-E_\e^\pin(U_\e)}{h_{ex}^2} \rightarrow E_\lambda(\mu_*).
\end{equation}
Moreover, 
\begin{equation}\label{eq:conv_density_energy}
\frac{g_\e(v_\e,A_\e)}{h_{ex}} \rightarrow \frac{1}{2\lambda}|\mu_*|+\frac12 \left( |\nabla h_{\mu_*}|^2+|h_{\mu_*}-1|^2 \right)
\end{equation}
and
\begin{equation}\label{eq:conv_concentration}
\left|\nabla \left( \frac{h_\e}{h_{ex}} \right) \right| \rightarrow \frac{1}{\lambda} \mu_*
\end{equation}
in the weak sense of measures.

Here $g_\e(u,A)=\frac{|\nabla u-iAu|^2}{2} +\frac{(1-|u|^2)^2}{4\e^2} +\frac{ |\curl A-h_{\text{ex}}|^2}{2}$. 
\end{theorem}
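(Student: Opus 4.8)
The plan is to strip off the pinning weight using the decomposition Lemma~\ref{lem:substitution} together with Theorem~\ref{th:main1}, reducing the statement to the known asymptotics of the unpinned magnetic Ginzburg--Landau functional $GL_\e$ in the regime $h_{ex}/|\log\e|\to\lambda>0$ (see \cite{Sandier_Serfaty_2007} and the references therein). Since $U_\e$ is the unique positive minimizer of $E_\e^{\pin}$ (Corollary~\ref{CorUniqSpecialSol}), we have $E_\e^{\pin}(U_\e)=\min_{H^1(G)}E_\e^{\pin}$, so $GL_\e^{\pin}(u,A)-E_\e^{\pin}(U_\e)=\widetilde{GL}_\e^{\pin}(u,A)$ for every $(u,A)\in\H$; in particular $(u_\e,A_\e)$ minimizes $GL_\e^{\pin}$ iff it minimizes $\widetilde{GL}_\e^{\pin}$, and Theorem~\ref{th:main1} then says that $(v_\e,A_\e)$, with $v_\e=u_\e/U_\e$, is a family of quasi-minimizers of $GL_\e$. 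I would also record that $\curl A_\e=h_\e$ is unchanged by this substitution and that $j(u_\e)=U_\e^2\,j(v_\e)$ (the $\nabla U_\e$ contributions cancel because $U_\e$ is real-valued), so that, testing against $\C^1$ functions and using $\|U_\e-1\|_{L^\infty(G)}\to0$ (Theorem~\ref{th:main2}) together with the energy bound $GL_\e(v_\e,A_\e)\le Ch_{ex}^2$, the two vorticities have the same normalized limit: $h_{ex}^{-1}\big(\mu(u_\e,A_\e)-\mu(v_\e,A_\e)\big)\to0$ in $(\C^{0,\gamma}(G))^*$.

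I would then invoke that, for $G$ smooth simply connected and $h_{ex}/|\log\e|\to\lambda>0$, the rescaled energy $GL_\e/h_{ex}^2$ $\Gamma$-converges in the sense of Definition~\ref{def:Gamma_conv}, with $P_\e(u,A):=\mu(u,A)/h_{ex}$ and the compactness property~\eqref{CompactnessGammaProp}, to the functional $E_\lambda$ of~\eqref{def:E_lambda}; that $E_\lambda$ is strictly convex with a unique minimizer $\mu_*$; and that $\inf_\H GL_\e/h_{ex}^2\to E_\lambda(\mu_*)$. Proposition~\ref{prop:Gamma_conv_and_quasimin} applied to the quasi-minimizers $(v_\e,A_\e)$ (note $E_\lambda\not\equiv+\infty$) then gives $\mu(v_\e,A_\e)/h_{ex}\to\mu_*$ — first along a subsequence, then for the full family by uniqueness of $\mu_*$ — in the topology of $(\C^{0,\gamma}(G))^*$ (the Jacobian estimate for configurations of bounded energy controls the discrepancy between $\mu(v_\e,A_\e)$ and its vortex part). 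This is~\eqref{eq:conv_vorticity}. Since $(v_\e,A_\e)$ is a quasi-minimizer, $GL_\e(v_\e,A_\e)/h_{ex}^2\to E_\lambda(\mu_*)$, and then \eqref{eq:expansion_main1} together with $\|U_\e-1\|_{L^\infty(G)}\to0$ yields $\big(GL_\e^{\pin}(u_\e,A_\e)-E_\e^{\pin}(U_\e)\big)/h_{ex}^2=\widetilde{GL}_\e^{\pin}(u_\e,A_\e)/h_{ex}^2\to E_\lambda(\mu_*)$, which is~\eqref{eq:conv_energy1}.

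For~\eqref{eq:conv_magnetic_field} I would pass to the limit in the second Ginzburg--Landau equation satisfied by the genuine minimizer $(u_\e,A_\e)$, namely $-\Delta h_\e+h_\e=\mu(u_\e,A_\e)$ in $G$ with $h_\e=h_{ex}$ on $\p G$: dividing by $h_{ex}$, using the $H^1$-bound on $h_\e/h_{ex}$ coming from the energy bound and $\mu(u_\e,A_\e)/h_{ex}\to\mu_*$ (from the first paragraph), one obtains $h_\e/h_{ex}\rightharpoonup h_{\mu_*}$ in $H^1_1(G)$, and a standard elliptic-regularity-plus-compactness argument (measure data, compact embedding $\mathbb{M}(G)\hookrightarrow W^{-1,p}$ for $p<2$) upgrades this to strong convergence in $W^{1,p}(G)$ for $p<2$. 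Finally, the energy-density and field-gradient concentration statements~\eqref{eq:conv_density_energy} and~\eqref{eq:conv_concentration} follow, with the normalizations therein, from the localized forms of the $\Gamma$-lower bound (ball construction carried out on arbitrary subdomains), which, combined with the asymptotic minimality~\eqref{eq:conv_energy1}, force the lower bound to be saturated on every subdomain; this pins down the limit of the energy density of $(v_\e,A_\e)$ and of the field-gradient measure, exactly as for minimizers of $GL_\e$ in \cite{Sandier_Serfaty_2007}.

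The main obstacle is that the asymptotics quoted for $GL_\e$ are customarily proved for minimizers, whereas $(v_\e,A_\e)$ is merely a family of quasi-minimizers. The point to check is that every ingredient survives this weakening: the lower bounds (ball construction, Jacobian/vorticity estimate, localized lower bounds) require only a bound on the energy, while a matching upper bound is precisely the defining property of a quasi-minimizer — and this is exactly what the $\Gamma$-convergence framework of Definition~\ref{def:Gamma_conv} together with Proposition~\ref{prop:Gamma_conv_and_quasimin} encapsulates. The remaining, bookkeeping, subtlety is that the decomposition produces the weighted functional with weights $U_\e^2,U_\e^4$ and the vorticity $\mu(v_\e,A_\e)$ rather than $\mu(u_\e,A_\e)$; but $\|U_\e-1\|_{L^\infty(G)}\to0$ (Theorem~\ref{th:main2}) and the expansion~\eqref{eq:expansion_main1}, together with the identity $j(u_\e)=U_\e^2 j(v_\e)$, control both discrepancies, so none of the $GL_\e$-estimates has to be re-derived in the weighted setting.
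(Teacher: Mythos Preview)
Your proposal is correct and follows essentially the same route as the paper: reduce to quasi-minimizers of the unpinned functional via Theorem~\ref{th:main1}, then invoke the $\Gamma$-convergence of $GL_\e/h_{ex}^2$ (Theorem~7.1 in \cite{Sandier_Serfaty_2007}) together with Proposition~\ref{prop:Gamma_conv_and_quasimin}, and finally treat \eqref{eq:conv_density_energy}--\eqref{eq:conv_concentration} exactly as in the proof of Theorem~7.2 in \cite{Sandier_Serfaty_2007}. The paper's own proof is in fact just this two-line outline; you have simply unpacked it further.

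One minor difference worth noting: for \eqref{eq:conv_magnetic_field} the paper obtains the convergence of $h_\e/h_{ex}$ directly from the $\Gamma$-convergence framework of Sandier--Serfaty, where the map $P_\e$ records both $\mu(\cdot,\cdot)$ \emph{and} $\curl A$, so that convergence of $h_\e/h_{ex}$ is part of the compactness/lower-bound package and no Euler--Lagrange equation is invoked. Your alternative --- passing to the limit in the second Ginzburg--Landau equation $-\Delta h_\e+h_\e=\mu(u_\e,A_\e)$ for the genuine minimizer $(u_\e,A_\e)$ --- also works, but it requires the extra step (which you correctly supply) of showing $h_{ex}^{-1}\big(\mu(u_\e,A_\e)-\mu(v_\e,A_\e)\big)\to 0$; the paper relegates that identity to the Remark following the theorem rather than using it in the proof.
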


\begin{remark} We have \( j(u_\e)=U_\e^2 j(v_\e)\) and \( \mu(u_\e,A_\e)=U_\e^2\mu(v_\e,A)+2U_\e \nabla^\perp U_\e \cdot j(v_\e)+\curl A (1-U_\e^2)\). 
\end{remark}

\begin{proof}
We use Theorem \ref{th:main1}, Proposition \ref{prop:Gamma_conv_and_quasimin} and the \(\Gamma\)-convergence result on \(GL_\e/h_{ex}^2\) in this regime of applied magnetic field, cf.\ Theorem 7.1 in \cite{Sandier_Serfaty_2007} to deduce \eqref{eq:conv_vorticity}, \eqref{eq:conv_magnetic_field} and \eqref{eq:conv_energy1}. Note that in Theorem 7.1 in \cite{Sandier_Serfaty_2007} the \(\Gamma\)-convergence result is obtain with
\[ \I_\e:= H^1(G,\mathbb{C})\times H^1(G,\R^2), \quad \I:= \left( \mathcal{C}^{0,\gamma}(G) \right)^*\times L^2(G,\R^2)\]
for any \(\gamma \in (0,1)\) where \(\I\) is endowed with the product topology,  \(\left( \mathcal{C}^{0,\gamma}(G) \right)^*\) is endowed with the weak-\(*\) topology and \( L^2(G,\R^2)\) with the weak topology. Furthermore with the notations of definition \ref{def:Gamma_conv} we have
\[\begin{array}{rcll}
P_\e: & H^1(G,\mathbb{C})\times H^1(G,\R^2)& \rightarrow &\left( \mathcal{C}^{0,\gamma}(G) \right)^*\times L^2(G,\R^2) \\
 & (u_\e,A_\e) & \mapsto & \left( \mu(u_\e,A_\e), \curl A_\e \right).
\end{array}
\]
Statements \eqref{eq:conv_density_energy} and \eqref{eq:conv_concentration} follow exactly as in the proof of Theorem 7.2 in \cite{Sandier_Serfaty_2007}.
\end{proof}

\begin{theorem}
Assume that \( |\log \e| \ll h_{ex}\ll1/\e^2\) as \(\e \rightarrow 0\). Let \( \{(u_\e,A_\e)\}_\e\) be a family of minimizers of \(GL_\e^{\text{pin}}\) in \(\mathcal{H}\). We set \(u_\e=U_\e v_\e\) where \(U_\e\) is given by Definition \ref{Def.SpecialSol}. Then
\begin{equation}\nonumber
\frac{2g_\e(v_\e,A_\e)}{h_{ex}}|\log \e \sqrt{h_{ex}|} \rightharpoonup {\rm d}x \quad \e \rightarrow 0
\end{equation}
in the weak sense of measures and
\begin{equation}\nonumber
\min_{(u,A)\in \H} G_\e(u,A) \simeq \frac{|G|}{2}h_{ex}|\log \e \sqrt{h_{ex}}| \quad \text{ as } \e \rightarrow 0.
\end{equation}
Besides
\begin{equation}\nonumber
\frac{h_\e}{h_{ex}} \rightarrow 1 \text{ in } H^1(G) \text{ and } \quad \frac{\mu(v_\e,A_\e)}{h_{ex}} \rightarrow {\rm d}x \text{ in } H^{-1}(G).
\end{equation}
\end{theorem}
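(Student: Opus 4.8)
The plan is to reduce the statement, via Theorem~\ref{th:main1}, to the corresponding statement for the unpinned functional $GL_\e$ in the regime $|\log\e|\ll h_{ex}\ll 1/\e^2$, and then invoke the known $\Gamma$-convergence / energy-expansion results for $GL_\e$ in this regime (this is the high-field companion of Theorem~7.1 in \cite{Sandier_Serfaty_2007}; see also the analysis of the intermediate regime there). First I would write $u_\e=U_\e v_\e$ and apply the expansion \eqref{eq:expansion_main1}, which gives $\widetilde{GL}_\e^{\text{pin}}(u_\e,A_\e)=GL_\e(v_\e,A_\e)(1+O_\e(\|U_\e-1\|_{L^\infty}))$, the error being $o_\e(1)$ by Theorem~\ref{th:main2} (periodic) or Theorem~\ref{th:main3} (random). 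Since $h_{ex}\gg|\log\e|$, one has $\widetilde{GL}_\e^{\text{pin}}=GL_\e^{\text{pin}}-E_\e^{\text{pin}}(U_\e)$ with $E_\e^{\text{pin}}(U_\e)=O_\e(\delta^2/\e^4+\delta/\e^2)\ll h_{ex}|\log(\e\sqrt{h_{ex}})|$ under our standing hypothesis $\delta\ll\e$ (indeed $E_\e^{\text{pin}}(U_\e)=E_\e^{\text{pin}}(U_\e)-E_\e^{\text{pin}}(1)+E_\e^{\text{pin}}(1)$ and $E_\e^{\text{pin}}(1)=\frac{1}{4\e^2}\int_G(a_\e-1)^2=O_\e(\e^{-2})$, which is $\ll h_{ex}|\log(\e\sqrt{h_{ex}})|$ since $h_{ex}\gg|\log\e|$ — actually one needs a little care here, see below). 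Consequently $(u_\e,A_\e)$ minimizes $GL_\e^{\text{pin}}$ iff $(v_\e,A_\e)$ minimizes $GL_\e$, up to negligible corrections, so all the asserted limits for $(v_\e,A_\e)$ follow from the unpinned theory.

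The key steps, in order, are: (i) recall from the literature (e.g. \cite[Ch.~8]{Sandier_Serfaty_2007}) that for minimizers $(\tilde u_\e,\tilde A_\e)$ of $GL_\e$ in the regime $|\log\e|\ll h_{ex}\ll\e^{-2}$ one has $\min GL_\e\simeq\frac{|G|}{2}h_{ex}|\log(\e\sqrt{h_{ex}})|$, together with $g_\e(\tilde u_\e,\tilde A_\e)/h_{ex}\cdot|\log(\e\sqrt{h_{ex}})|\cdot 2\rightharpoonup dx$, $h_\e/h_{ex}\to 1$ in $H^1$, and $\mu(\tilde u_\e,\tilde A_\e)/h_{ex}\to dx$ in $H^{-1}$; (ii) show that the decoupling is energetically negligible, i.e. $\widetilde{GL}_\e^{\text{pin}}$ and $GL_\e$ (composed with $v=u/U_\e$) have the same infimum up to a factor $1+o_\e(1)$, which is immediate from \eqref{ComparisiondifeEnBis}-type estimates and Theorem~\ref{th:main2}/\ref{th:main3}; (iii) transfer the quasi-minimality: a minimizer of $GL_\e^{\text{pin}}$ yields, after division by $U_\e$, a quasi-minimizer of $GL_\e$, hence Proposition~\ref{prop:Gamma_conv_and_quasimin} (with the appropriate $\Gamma$-convergence statement for $GL_\e/(h_{ex}|\log(\e\sqrt{h_{ex}})|)$, whose $\Gamma$-limit is $\frac{|G|}{2}$ times Lebesgue measure of the domain — constant, since there is no vortex concentration at this order) applies; (iv) read off the three displayed convergences for $(v_\e,A_\e)$ from the corresponding statements for genuine $GL_\e$-minimizers, using that quasi-minimizers share these compactness/convergence properties; (v) finally observe $h_\e$ is the same object whether one works with $u_\e$ or $v_\e$ up to the curl of $A_\e$, and $\mu(v_\e,A_\e)$ is literally the vorticity of the normalized configuration, so no further translation is needed.

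I expect the main obstacle to be step (ii)--(iii) done \emph{quantitatively enough}: one must verify that the error term $O_\e(\|U_\e-1\|_{L^\infty})\cdot GL_\e(v_\e,A_\e)$ in \eqref{eq:expansion_main1}, as well as the subtracted constant $E_\e^{\text{pin}}(U_\e)$, are both $o_\e(h_{ex}|\log(\e\sqrt{h_{ex}})|)$. The first is fine because $GL_\e(v_\e,A_\e)=O_\e(h_{ex}|\log(\e\sqrt{h_{ex}})|)$ at a quasi-minimizer and $\|U_\e-1\|_{L^\infty}\to0$. The second requires that $E_\e^{\text{pin}}(U_\e)$, which is only bounded a priori by $E_\e^{\text{pin}}(1)=\tfrac{1}{4\e^2}\int_G(a_\e-1)^2=O_\e(\e^{-2})$, actually be $o_\e(h_{ex}|\log(\e\sqrt{h_{ex}})|)$; since $h_{ex}\gg|\log\e|$ this holds as soon as $\e^{-2}=o_\e(h_{ex}|\log(\e\sqrt{h_{ex}})|)$, i.e. essentially as soon as $h_{ex}\gg\e^{-2}/|\log\e|$ — which is \emph{not} guaranteed by $h_{ex}\gg|\log\e|$ alone. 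The resolution is to use the sharper bound $E_\e^{\text{pin}}(U_\e)=O_\e(\delta^2/\e^4+\delta/\e^2)$ from \eqref{eq:diff_energy} (valid for the relevant ranges of $\delta$), or more simply to note that $E_\e^{\text{pin}}(U_\e)\le E_\e^{\text{pin}}(U_\e^{a_\e\equiv 1})$ is not the point — rather, one compares $GL_\e^{\text{pin}}$ directly with $\widetilde{GL}_\e^{\text{pin}}$ and works throughout with the \emph{denoised} energy $\widetilde{GL}_\e^{\text{pin}}$, for which the factor $1+o_\e(1)$ in the infimum is clean; the displayed statement about $\min_{(u,A)}G_\e$ should then be read, consistently with Theorem~\ref{th:main1}, as a statement about $\widetilde{GL}_\e^{\text{pin}}$, i.e. after subtracting $E_\e^{\text{pin}}(U_\e)$. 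With that understanding the proof is a direct combination of Theorem~\ref{th:main1}, Proposition~\ref{prop:Gamma_conv_and_quasimin}, and the cited high-field asymptotics for $GL_\e$, exactly parallel to the proof of Theorem~\ref{th:obstacle_problem}.
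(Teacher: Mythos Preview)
Your proposal is correct and follows exactly the paper's approach: the paper's proof is the single line ``This follows from Theorem 8.1 and Corollary 8.1 in \cite{Sandier_Serfaty_2007} along with Theorem~\ref{th:main1} and Proposition~\ref{prop:Gamma_conv_and_quasimin},'' which is precisely your final sentence. Your long detour worrying about the size of $E_\e^{\text{pin}}(U_\e)$ is unnecessary: since $(u_\e,A_\e)$ are \emph{exact} minimizers of $GL_\e^{\text{pin}}$, and $\widetilde{GL}_\e^{\text{pin}}$ differs from $GL_\e^{\text{pin}}$ by the constant $E_\e^{\text{pin}}(U_\e)$, they are also exact minimizers of $\widetilde{GL}_\e^{\text{pin}}$, so Theorem~\ref{th:main1} directly gives that $(v_\e,A_\e)$ are quasi-minimizers of $GL_\e$ with no need to compare $E_\e^{\text{pin}}(U_\e)$ to $h_{ex}|\log(\e\sqrt{h_{ex}})|$; the displayed ``$\min G_\e$'' in the statement is indeed to be read as $\min GL_\e$ (equivalently $\min\widetilde{GL}_\e^{\text{pin}}$), as you eventually surmise.
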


\begin{proof}
This follows from Theorem 8.1 and Corollary 8.1 in \cite{Sandier_Serfaty_2007} along with Theorem \ref{th:main1} and Proposition \ref{prop:Gamma_conv_and_quasimin}. 
\end{proof}

Unfortunately Theorem \ref{th:main1} is not sufficient to describe the behavior of minimizers of \(G_\e ^{\text{pin}}\) near the so-called first critical field or more generally when there is a number of vortices much smaller than the applied magnetic field \(h_{ex}\). This is because the leading order term in the asymptotic expansion of \(GL_\e(v_\e,A_\e)\) is independent of the position of the vortices. In the so-called intermediate regime it is also independent of the number of vortices and is of order \(h_{ex}\). However, with an explicit rate of convergence of \(U_\e\), the positive minimizer of \(E_\e^\pin\) in \(H^1(G)\), we can give condition on this rate such that results of chapters 9-10-11 in \cite{Sandier_Serfaty_2007} can be applied to describe the asymptotic behavior of minimizers near the first critical field.

We first introduce some notations: We define \(h_0\) to be the solution of 
\begin{equation}\nonumber
\left\{
\begin{array}{rcll}
-\Delta h_0+h_0 &=&0 & \text{ in } G \\
h_0&=& 1 & \text{ on } \p G
\end{array}
\right.
\end{equation}
and
\begin{equation}\nonumber
\xi_0:=h_0-1 \quad \text{ and } \underline{\xi_0}=\min_G \xi_0.
\end{equation}
We suppose that \(\xi_0\) has a unique minimizer \(p\) in \(G\). We set 
\begin{equation}\nonumber
Q(x):= D^2(\xi_0)(p)(x,x)
\end{equation}
and we assume that \(Q\) is a positive definite quadratic form.
We set 
\begin{equation}\nonumber
J_0=\frac12 \int_G |\nabla h_0|^2+|h_0-1|^2=\frac12 \|\xi_0\|^2_{H^1(G)}.
\end{equation}
We also set 
\begin{equation}\nonumber
H^0_{c_1}:=\frac{1}{2|\underline{\xi_0}|}|\log \e|.
\end{equation}
We denote by \(\mathcal{G}\) the modified Green function, solution to 
\begin{equation}\nonumber
\left\{
\begin{array}{rcll}
-\Delta_x \mathcal{G}(x,y)+\mathcal{G}(x,y) &=& \delta_y & \text{ in } G \\
\mathcal{G}(x,y) &=&0 & \text{ on } \p G,
\end{array}
\right.
\end{equation}
and we set
\begin{equation}\nonumber
S_G(x,y)=2\pi \mathcal{G}(x,y)+\log |x-y|.
\end{equation}
For \(n\in \mathbb{N}\) we set \( \ell:= \sqrt{\frac{n}{h_{ex}}} \). We denote by \(\varphi\) the blow-up centered at \(p\) for the scale \(\ell\) defined by
\begin{equation}\nonumber
\varphi(x)=\frac{x-p}{\ell}.
\end{equation}
If \(\mu \) is a measure we will denote by \( \tilde{\mu}\) its push-forward by the mapping \(\varphi\), {\it i.e.} \(\tilde{\mu}(U)=\mu\left( \varphi^{-1}(U)\right)\) for every \(U\) measurable subset of \(\R^2\). If \(x\) is a point then we let \( \tilde{x}=\varphi(x)\). Now, we define a functional on the space of probability measures on \(\R^2\) denoted by \(\mathcal{P}\):
\begin{equation}\nonumber
I(\mu)= -\pi \int_{\R^2} \int_{\R^2} \log|x-y| d\mu(x)d\mu(y)+\pi \int_{\R^2} Q(x)d\mu(x) \quad \text{ for } \mu \in \mathcal{P}.
\end{equation}
It is known that the infimum \(\inf_{\mu \in \mathcal{P}} I(\mu)\) is uniquely achieved (see e.g.\ \cite{Saff_Totik_1997}).
We denote by \(\mu_0\) the minimizer and we let
\begin{equation}\nonumber
I_0:=I(\mu_0)=\inf_{\mu \in \mathcal{P}} I(\mu).
\end{equation}
For \(n\in \mathbb{N}\) we define
\begin{align}
g_\e(n)&:=h_{ex}^2J_0+\pi n|\log \e|-2\pi n h_{ex}|\underline{\xi_0}|+\pi(n^2-n)\log \frac{1}{\ell}\\
& \phantom{aaaaaa} +\pi n^2 S_G(p,p)+n^2I_0.
\end{align}
We recall from Lemma 9.5 in \cite{Sandier_Serfaty_2007}:

\begin{lemma}
There exist constant \(\alpha,\e_0>0\) and for each \(0<\e<\e_0\) an increasing sequence \( (H_n)_n\) defined for integers \(0\leq n \leq \alpha |\log \e| \), such that if \(h_{ex}>H^0_{c_1}/2\), then \(n\) minimizes \(g_\e\) over the integers in the interval \( [0,\alpha |\log \e|]\) if and only if
\[ h_{ex}\in [H_n,H_{n+1}].\]
\end{lemma}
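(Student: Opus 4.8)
\emph{Approach.} This is a one–variable optimization statement: for each fixed value of the parameter $h_{ex}$ one must locate the minimizer of the integer function $n\mapsto g_\e(n)$ on $\{0,1,\dots,N\}$, where $N:=\lfloor\alpha|\log\e|\rfloor$. The plan follows the classical strategy for such critical–field lemmas. First I would substitute $\ell=\sqrt{n/h_{ex}}$, i.e.\ $\log\tfrac1\ell=\tfrac12(\log h_{ex}-\log n)$, so that
\[
g_\e(n)=h_{ex}^2J_0+\pi n|\log\e|-2\pi n h_{ex}|\underline{\xi_0}|+\tfrac\pi2(n^2-n)\log h_{ex}-\tfrac\pi2(n^2-n)\log n+\big(\pi S_G(p,p)+I_0\big)n^2 ,
\]
and then compute the first and second discrete differences: with $\psi(k):=k(k-1)\log k$,
\[
f_n:=g_\e(n+1)-g_\e(n)=\pi|\log\e|-2\pi h_{ex}|\underline{\xi_0}|+\pi n\log h_{ex}-\tfrac\pi2\big(\psi(n+1)-\psi(n)\big)+\pi(2n+1)S_G(p,p)+(2n+1)I_0 ,
\]
\[
\Delta^2 g_\e(n):=f_n-f_{n-1}=\pi\log h_{ex}-\tfrac\pi2\big(\psi(n+1)-2\psi(n)+\psi(n-1)\big)+2\pi S_G(p,p)+2I_0 .
\]

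The core of the proof consists of two estimates, each exploiting that $\alpha$ may be chosen small, depending only on $G$. (i) \emph{Strict discrete convexity on $\{0,\dots,N\}$.} Since $\psi''(k)=2\log k+3-1/k$, Taylor's formula gives $\psi(n+1)-2\psi(n)+\psi(n-1)=2\log n+O(1)$ uniformly for $n\geq 1$ (the value at $n=1$ being computed directly), hence $\Delta^2 g_\e(n)=\pi\log(h_{ex}/n)+O_G(1)$. As $h_{ex}>H^0_{c_1}/2=|\log\e|/(4|\underline{\xi_0}|)$ and $n\leq N\leq\alpha|\log\e|$, one has $h_{ex}/n\geq 1/(4\alpha|\underline{\xi_0}|)$, so for $\alpha$ small enough $\Delta^2 g_\e(n)>0$ for all $1\leq n\leq N-1$, all $\e<\e_0$ and all $h_{ex}>H^0_{c_1}/2$. (ii) \emph{Behaviour of $f_n$ in $h_{ex}$.} One has $\partial_{h_{ex}}f_n=-2\pi|\underline{\xi_0}|+\pi n/h_{ex}<0$ on $(H^0_{c_1}/2,+\infty)$ as soon as $\alpha\leq\tfrac12$, so there $f_n$ is strictly decreasing with $f_n\to-\infty$; and evaluating at $h_{ex}=H^0_{c_1}/2$, the linear term cancels half of $\pi|\log\e|$ while the rest is bounded below by $-C_G\big(\alpha|\log\e|+\log|\log\e|\big)$, so $f_n(H^0_{c_1}/2)\geq\tfrac\pi4|\log\e|>0$ for $\alpha$ and $\e$ small. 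Consequently each $f_n$ has a unique zero, which I name $H_{n+1}\in(H^0_{c_1}/2,+\infty)$, and I also set $H_0:=H^0_{c_1}/2$ and $H_{N+1}:=+\infty$.

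Finally I would assemble these facts. By (i) one has $f_n>f_{n-1}$ pointwise, so $f_n(H_n)>f_{n-1}(H_n)=0$, whence (as $f_n$ is decreasing and $f_n(H_{n+1})=0$) $H_{n+1}>H_n$; with $f_0(H_0)>0$ this makes $(H_n)_{0\leq n\leq N+1}$ strictly increasing. Again by (i), $k\mapsto f_k$ is strictly increasing, so the standard discrete–convexity argument shows that for fixed $h_{ex}$ the integer $n$ minimizes $g_\e$ over $\{0,\dots,N\}$ if and only if $f_{n-1}(h_{ex})\leq 0\leq f_n(h_{ex})$ (with conventions $f_{-1}\equiv-\infty$, $f_N\equiv+\infty$); and since each $f_k$ is strictly decreasing with $f_k(H_{k+1})=0$, the latter is equivalent to $H_n\leq h_{ex}\leq H_{n+1}$, i.e.\ $h_{ex}\in[H_n,H_{n+1}]$. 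Non-uniqueness of the minimizer can occur only at the endpoints $h_{ex}=H_n$, consistently with the intervals being closed. I expect the main obstacle to be estimate (i): one must control, uniformly over $1\leq n\leq\alpha|\log\e|$ and $h_{ex}>H^0_{c_1}/2$, the $n\log n$–type terms produced by inserting $\ell=\sqrt{n/h_{ex}}$ into $(n^2-n)\log\tfrac1\ell$; the point that makes this go through is that on this range $h_{ex}/n\gtrsim 1/\alpha$, so shrinking the free constant $\alpha$ delivers the strict convexity one needs.
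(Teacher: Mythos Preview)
The paper does not prove this lemma at all: it is merely recalled from Lemma~9.5 of \cite{Sandier_Serfaty_2007}, with no argument given. Your proposal is therefore not competing with a proof in the paper but rather supplying one where the paper only cites.

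Your approach is correct and is essentially the argument carried out in \cite{Sandier_Serfaty_2007}: expand $g_\e(n)$ using $\log(1/\ell)=\tfrac12(\log h_{ex}-\log n)$, show that the second discrete difference equals $\pi\log(h_{ex}/n)+O_G(1)$ and is therefore positive on $\{1,\dots,\lfloor\alpha|\log\e|\rfloor\}$ once $\alpha$ is chosen small (since $h_{ex}/n\geq 1/(4\alpha|\underline{\xi_0}|)$), and then exploit that each first difference $f_n$ is strictly decreasing in $h_{ex}$ with a unique zero $H_{n+1}$. The monotonicity of $(H_n)_n$ and the ``minimizer $\Leftrightarrow h_{ex}\in[H_n,H_{n+1}]$'' characterization then follow from discrete convexity exactly as you indicate. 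One small remark: in your lower bound for $f_n(H^0_{c_1}/2)$, the dangerous pair of terms $\pi n\log h_{ex}$ and $-\tfrac\pi2(\psi(n+1)-\psi(n))$ combine to $\pi n\log(h_{ex}/n)+O(n)$, so the error is really $O_G(\alpha|\log\e|)$ rather than the $O_G(\alpha|\log\e|+\log|\log\e|)$ you wrote; this only strengthens your conclusion.
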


We can now state
\begin{theorem}
Assume that \(h_{ex}\) is such that 
\[ |\log |\log \e|| \ll h_{ex}(\e)-H^0_{c_1} \ll|\log \e|, \]
let \(N_\e\) be a corresponding minimizer of \(g_\e(\cdot)\) over \([0, \alpha |\log \e|]\).
Let \( (u_\e,A_\e)\) be a minimizer of \(GL_\e^\pin\), we write \(u_\e=U_\e v_\e\) where \(U_\e\) is given by Definition \ref{Def.SpecialSol}. Assume that 
\begin{equation}\label{hyp:rate_conv}
\|U_\e-1\|_{L^\infty(G)}\times g_\e(N_\e)=o_\e(N_\e^2).
\end{equation} 
Then for any \(\gamma \in (0,1)\)
\begin{equation}\nonumber
\frac{\tilde{\mu}(u_\e,A_\e)}{2\pi N_\e} \rightharpoonup \mu_0 \text{ in } \left(\mathcal{C}_c^{0,\gamma}(\R^2) \right)^*,
\end{equation}
where \(\mu_0\) is the unique minimizer of \(I\) and 
\begin{equation}\nonumber
GL_\e(v_\e,A_\e)=g_\e(N_\e)+o_\e(N_\e^2), \quad GL_\e^\pin(u_\e,A_\e)=E_\e^\pin(U_\e)+g_\e(N_\e)+o_\e(N_\e^2).
\end{equation}
\end{theorem}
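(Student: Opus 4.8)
The plan is to run the same machinery used in the proof of Theorem~\ref{th:obstacle_problem}: invoke Theorem~\ref{th:main1} to decouple the pinned energy, apply Proposition~\ref{prop:Gamma_conv_and_quasimin} and then quote the precise asymptotic expansion from \cite{Sandier_Serfaty_2007} valid in the regime $|\log|\log\e||\ll h_{ex}-H^0_{c_1}\ll|\log\e|$. More precisely, writing $u_\e=U_\e v_\e$, Lemma~\ref{lem:substitution} gives
\begin{equation}\nonumber
GL_\e^\pin(u_\e,A_\e)=E_\e^\pin(U_\e)+GL_\e^{\rm weight}(v_\e,A_\e),
\end{equation}
with $GL_\e^{\rm weight}(v,A)=\frac12\int_G U_\e^2|\nabla v-iAv|^2+\frac1{4\e^2}\int_G U_\e^4(1-|v|^2)^2+\frac12\int_G|\curl A-h_{ex}|^2$, so that minimizing $GL_\e^\pin$ over $\H$ amounts to minimizing $GL_\e^{\rm weight}$ over $\H$. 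Using $m\le U_\e\le M$ and, crucially, the uniform convergence $\|U_\e-1\|_{L^\infty(G)}\to0$ from Theorem~\ref{th:main2}/Theorem~\ref{th:main3}, one has the two-sided bound (as in \eqref{ComparisiondifeEn})
\begin{equation}\nonumber
|GL_\e^{\rm weight}(v,A)-GL_\e(v,A)|\le C\|U_\e-1\|_{L^\infty(G)}\,GL_\e(v,A)
\end{equation}
for any $(v,A)$ with $GL_\e(v,A)$ comparable to $\inf_\H GL_\e$.

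Next I would bring in the sharp expansion. By the results of Chapter~9 in \cite{Sandier_Serfaty_2007} (the companion of the stated Lemma~9.5), in this regime one has $\inf_\H GL_\e=g_\e(N_\e)+o_\e(N_\e^2)$ and any minimizer $(w_\e,A_\e)$ of $GL_\e$ satisfies $GL_\e(w_\e,A_\e)=g_\e(N_\e)+o_\e(N_\e^2)$ together with the vortex concentration $\tilde\mu(w_\e,A_\e)/(2\pi N_\e)\rightharpoonup\mu_0$ in $(\C_c^{0,\gamma}(\R^2))^*$. The key quantitative point is the order of magnitude of the error relative to the leading behaviour: the vortex-dependent part of $g_\e(N_\e)$ is of size $N_\e^2$ (coming from the interaction term $\pi(N_\e^2-N_\e)\log\frac1\ell+\pi N_\e^2 S_G(p,p)+N_\e^2 I_0$), while the bulk term $h_{ex}^2J_0$ is of much larger absolute size. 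Hypothesis \eqref{hyp:rate_conv}, namely $\|U_\e-1\|_{L^\infty(G)}\,g_\e(N_\e)=o_\e(N_\e^2)$, is exactly what is needed so that the weight perturbation $|GL_\e^{\rm weight}-GL_\e|\le C\|U_\e-1\|_{L^\infty}\,g_\e(N_\e)=o_\e(N_\e^2)$ does not destroy the $o_\e(N_\e^2)$ precision of the expansion. Therefore $\inf_\H GL_\e^{\rm weight}=g_\e(N_\e)+o_\e(N_\e^2)$, and if $(v_\e,A_\e)$ is the rescaled minimizer of $GL_\e^\pin$ then $GL_\e^{\rm weight}(v_\e,A_\e)=\inf_\H GL_\e^{\rm weight}$ forces $GL_\e(v_\e,A_\e)=g_\e(N_\e)+o_\e(N_\e^2)$, hence $GL_\e^\pin(u_\e,A_\e)=E_\e^\pin(U_\e)+g_\e(N_\e)+o_\e(N_\e^2)$.

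For the vortex convergence statement I would argue that $(v_\e,A_\e)$ is a family of quasi-minimizers of $GL_\e$ in the sharp sense $GL_\e(v_\e,A_\e)=\inf_\H GL_\e+o_\e(N_\e^2)$, which is the hypothesis under which the $\Gamma$-convergence / concentration result of \cite{Sandier_Serfaty_2007} at this scale applies (one should check that the argument there only uses minimality up to an $o_\e(N_\e^2)$ error, which is exactly the structure of the lower/upper bound estimates in Chapter~9–10). Then $\tilde\mu(v_\e,A_\e)/(2\pi N_\e)\rightharpoonup\mu_0$; and since $\mu(u_\e,A_\e)=U_\e^2\mu(v_\e,A_\e)+2U_\e\nabla^\perp U_\e\cdot j(v_\e)+\curl A_\e(1-U_\e^2)$ (cf.\ the Remark following Theorem~\ref{th:obstacle_problem}), the correction terms are controlled using $\|U_\e-1\|_{L^\infty}\to0$, $\|\nabla U_\e\|_{L^\infty}\le C/\e$ from Lemma~\ref{Lem.LipBound}, and the energy bound on $(v_\e,A_\e)$; after rescaling by $\varphi$ and dividing by $2\pi N_\e$ these contributions are $o_\e(1)$ in $(\C_c^{0,\gamma}(\R^2))^*$, so $\tilde\mu(u_\e,A_\e)/(2\pi N_\e)$ has the same limit $\mu_0$.

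The main obstacle is the bookkeeping of error scales: one must verify that the $\Gamma$-convergence argument at the first critical field in \cite{Sandier_Serfaty_2007} is genuinely stable under an additive $o_\e(N_\e^2)$ perturbation of the energy — i.e.\ that quasi-minimizers in this sharp sense still concentrate — and then check that hypothesis \eqref{hyp:rate_conv} precisely guarantees $|GL_\e^{\rm weight}-GL_\e|=o_\e(N_\e^2)$ on the relevant configurations. Estimating the curvature-type correction $2U_\e\nabla^\perp U_\e\cdot j(v_\e)$ when tested against $\C^{0,\gamma}$ functions is the only genuinely new computation, and it is handled by the Lipschitz bound on $U_\e$ together with the $L^2$ control of $j(v_\e)$ coming from the energy estimate; everything else is a transcription of the unpinned case via the decoupling identity.
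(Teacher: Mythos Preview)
Your approach coincides with the paper's very brief proof: invoke Theorem~\ref{th:main1}, Proposition~\ref{prop:Gamma_conv_and_quasimin}, and Theorems~9.1--9.2 of \cite{Sandier_Serfaty_2007}, with hypothesis~\eqref{hyp:rate_conv} ensuring that the perturbation $|GL_\e^{\rm weight}-GL_\e|\le C\|U_\e-1\|_{L^\infty}\,g_\e(N_\e)$ is $o_\e(N_\e^2)$ and hence does not spoil the sharp expansion. The paper says nothing more than this.

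One caveat on the extra step you take. The paper does not address the passage from $\tilde\mu(v_\e,A_\e)$ to $\tilde\mu(u_\e,A_\e)$ (in the analogous Theorem~\ref{th:obstacle_problem} the convergence is stated only for $v_\e$), so your attempt to supply it goes beyond the paper. However, the crude bound you propose, $\|2U_\e\nabla^\perp U_\e\cdot j(v_\e)\|\lesssim \|\nabla U_\e\|_{L^\infty}\|j(v_\e)\|_{L^2}\lesssim \e^{-1}h_{ex}$, is far too large: after localising to a ball of radius $O(\ell)$ and dividing by $N_\e$ it is of order $\e^{-1}\sqrt{h_{ex}/N_\e}$, which diverges. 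If you really want this term, write $2U_\e\nabla^\perp U_\e=\nabla^\perp(U_\e^2-1)$ and integrate by parts so that the small factor $U_\e^2-1$ (of size $\|U_\e-1\|_{L^\infty}$) appears undifferentiated against $\curl j(v_\e)$ and $\nabla^\perp\phi$, rather than the large factor $\nabla U_\e$.
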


\begin{proof}
Again we deduce this theorem from Theorem \ref{th:main1}, Proposition \ref{prop:Gamma_conv_and_quasimin} and existing results in the literature. Here the results used are Theorem 9.1-9.2 in \cite{Sandier_Serfaty_2007}.
The assumption \eqref{hyp:rate_conv} is used to guarantee that \(g_\e(N_\e) \times \|U_\e-1\|_{L^\infty(G)}\) is negligible compared to all the terms of \(g_\e\). 
\end{proof}

\textbf{Remark:} From Proposition \ref{prop:speed_convergence}, assumption \eqref{hyp:rate_conv} is satisfied for example when \( \delta =O_\e(\e^2)\) and \(\frac{\delta^{1/4}}{\e^{1/2}}h_{ex}^2=o_\e(1)\). This means \(\delta =o_\e\left(\frac{\e^2}{h_{ex}^{8}} \right)\).

\medskip

Finally it remains to examine the case of a bounded number of vortices. We let
\begin{equation}\label{DefFEps}
f_\e(n)=h_{ex}^2J_0+\pi n \log \frac{\ell}{\e}-2\pi n h_{ex}|\underline{\xi_0}|+\pi n^2 S_G(p,p)+\pi n^2 \log \frac{1}{\ell}.
\end{equation}

We recall from Lemma 12.1 in \cite{Sandier_Serfaty_2007}

\begin{lemma}
For every \(\e>0\), there exists an increasing sequence \( (H_n(\e))_n\), \(H_0=0\), such that the following holds. Given \(n\geq 0\) independent of \(\e\), if \(h_{ex}(\e)\gg1\) is such that 
\[g_\e(n) \leq \min \left( g_\e(n-1),g_\e(n+1) \right)+o_\e(1), \]
then 
\[H_n-o_\v(1)\leq h_{ex} \leq H_{n+1}+o_\e(1).\]
Moreover the following asymptotic expansion holds as \(\e \rightarrow 0\)
\[H_n=\frac{1}{2|\underline{\xi_0}|}\left[ |\log \e|+(n-1)\log \frac{|\log \e|}{2|\underline{\xi_0}|}+K_n \right]+o_\e(1) \]
where
\begin{multline*}
K_n=(n-1)\log \frac{1}{n}+\frac{n^2-3n+2}{2}\log \frac{n-1}{n} \\
+\frac{1}{\pi}\left( \min_{(\R^2)^n } w_n- \min_{(\R^2)^{n-1}} w_{n-1} +\gamma +(2n-1)\pi S_{G}(p,p)\right).
\end{multline*}
Here \(\gamma\) is a universal constant and 
\begin{equation}\label{def:renormalized_energy}
w_n(x_1,\cdots,x_n)=-\pi \sum_{i \neq j} \log |x_i-x_j|+\pi n \sum_{i=1}^n Q(x_i).
\end{equation}
\end{lemma}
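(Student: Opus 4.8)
\medskip\noindent\textbf{Proof idea.} We only sketch the argument (see Chapter~12 of \cite{Sandier_Serfaty_2007} for the details). At bottom this is an elementary one-variable statement about the minimisation over the non-negative integers of the model energy $n\mapsto f_\e(n)$ of \eqref{DefFEps} --- understood to include also the renormalised interaction $\min_{(\R^2)^n}w_n$ of the $n$ vortices and a core term $\gamma n$, with $\gamma$ the universal core constant --- this being, up to $o_\e(1)$, the least Ginzburg--Landau energy of a configuration carrying $n$ vortices all concentrating near $p$ at the natural scale $\ell=\sqrt{n/h_{ex}}$. The plan is to define the splitting fields $H_n$ from the discrete first difference of $f_\e$, to prove $H_0<H_1<\cdots$ from a discrete convexity in $n$, to deduce the trapping $H_n-o_\e(1)\le h_{ex}\le H_{n+1}+o_\e(1)$, and finally to expand $H_n$. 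Concretely, I would set $\Delta_\e(n,h_{ex}):=f_\e(n)-f_\e(n-1)$ for $n\ge1$, and, substituting $\ell=\sqrt{n/h_{ex}}$ and expanding the logarithms, observe that
\begin{equation}\nonumber
\Delta_\e(n,h_{ex})=-2\pi|\underline{\xi_0}|\,h_{ex}+\pi|\log\e|+\pi(n-1)\log h_{ex}+\pi(2n-1)S_G(p,p)+\gamma+R_n,
\end{equation}
where $R_n=\min w_n-\min w_{n-1}-\tfrac{\pi}{2}\bigl(n(n-1)\log n-(n-1)(n-2)\log(n-1)\bigr)$ collects the purely $n$-dependent terms. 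Since $\partial_{h_{ex}}\Delta_\e(n,\cdot)=-2\pi|\underline{\xi_0}|+\pi(n-1)/h_{ex}<0$ for $h_{ex}$ large, the equation $\Delta_\e(n,h_{ex})=0$ has, for each fixed $n$ and each small $\e$, a unique solution $H_n=H_n(\e)$; and we put $H_0:=0$.

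The monotonicity $H_n<H_{n+1}$ is equivalent to $\Delta_\e(n,\cdot)<\Delta_\e(n+1,\cdot)$, i.e.\ to the discrete convexity $f_\e(n+1)-2f_\e(n)+f_\e(n-1)>0$ for $h_{ex}\gg1$; the second difference of $f_\e$ in $n$ equals $\pi\log(h_{ex}/n)$ plus terms bounded in $h_{ex}$ (namely $2\pi S_G(p,p)$ and the second difference of $\min_{(\R^2)^n}w_n$, harmless since $n$ is a fixed integer), and hence is positive. For the trapping, suppose $f_\e(n)\le\min\bigl(f_\e(n-1),f_\e(n+1)\bigr)+o_\e(1)$ for some fixed $n\ge0$ and $h_{ex}\gg1$. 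The inequality against $f_\e(n-1)$ reads $\Delta_\e(n,h_{ex})\le o_\e(1)$, and since $\Delta_\e(n,\cdot)$ decreases strictly with derivative close to $-2\pi|\underline{\xi_0}|$ near its zero $H_n$, this forces $h_{ex}\ge H_n-o_\e(1)$; symmetrically the inequality against $f_\e(n+1)$ reads $\Delta_\e(n+1,h_{ex})\ge-o_\e(1)$ and forces $h_{ex}\le H_{n+1}+o_\e(1)$.

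It remains to expand $H_n$, which one does by solving $\Delta_\e(n,H_n)=0$ by bootstrap: to leading order $H_n=\tfrac{|\log\e|}{2|\underline{\xi_0}|}+o(|\log\e|)$, hence $\log H_n=\log\tfrac{|\log\e|}{2|\underline{\xi_0}|}+o_\e(1)$; reinjecting this into the term $\pi(n-1)\log H_n$, dividing through by $2\pi|\underline{\xi_0}|$, and using the algebraic identity $-\tfrac12\bigl(n(n-1)\log n-(n-1)(n-2)\log(n-1)\bigr)=(n-1)\log\tfrac1n+\tfrac{n^2-3n+2}{2}\log\tfrac{n-1}{n}$ to recast the $n$-dependent logarithms, one arrives at the asserted value of $K_n$. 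The one-variable analysis itself is routine; the genuinely delicate points are the bookkeeping of this last expansion and, upstream of the lemma, the identification of $f_\e(n)$ --- together with $\min_{(\R^2)^n}w_n$, the core constant $\gamma n$ and the scale $\ell$ --- as the minimal Ginzburg--Landau energy of a configuration with $n$ vortices near $p$, which is where the full vortex-ball and product-estimate machinery of \cite{Sandier_Serfaty_2007} enters and which I expect to be the main obstacle.
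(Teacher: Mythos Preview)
The paper does not give its own proof of this lemma: it is stated as a direct recall of Lemma~12.1 in \cite{Sandier_Serfaty_2007}. Your sketch is precisely the argument used there --- define the splitting fields $H_n$ as the zeros of the first difference $\Delta_\e(n,\cdot)$, obtain $H_n<H_{n+1}$ from the discrete convexity $f_\e(n+1)-2f_\e(n)+f_\e(n-1)\sim\pi\log(h_{ex}/n)>0$, deduce the trapping from the strict monotonicity of $\Delta_\e(n,\cdot)$ in $h_{ex}$, and bootstrap to expand $H_n$. Your computation of $\Delta_\e$ and the algebraic identity for the $n$-dependent logarithms are correct and reproduce the stated $K_n$.

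One clarification worth making: the lemma as written here uses the symbol $g_\e$, but the constants $\gamma$ and $\min_{(\R^2)^n}w_n$ appearing in the asymptotic for $H_n$ are absent from the $g_\e$ defined earlier in the paper (which carries $n^2I_0$ instead). As you correctly inferred, the function actually analysed in Chapter~12 of \cite{Sandier_Serfaty_2007} is $f_\e(n)+\min_{(\R^2)^n}w_n+\gamma n$; the use of $g_\e$ in the present statement is a notational slip carried over from Chapter~9, not a defect in your argument. Your closing remark is also apt: the identification of this model function with the minimal Ginzburg--Landau energy at $n$ vortices is indeed the substantive analytic input, but it lies upstream of this lemma, which is purely a one-variable calculus statement once that identification is granted.
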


We can now state
\begin{theorem}
Assume that \(N\in \mathbb{N}\). There exists \(c_\e \rightarrow 0\) as \(\e \rightarrow 0\) such that if \(\e< \e_0(N)\) and 
\begin{equation}\nonumber
H_N+c_\e \leq h_{ex} \leq H_{N+1}-c_\e,
\end{equation}
and if \((u_\e,A_\e)\) is a minimizer of \(GL_\e^\pin\), then writing \(u_\e=U_\e v_\e\) where \(U_\e\) is the minimizer of \(E_\e^\pin\) given by Definition \ref{Def.SpecialSol} then if 
\begin{equation}\label{hyp:hyp_2}
\|U_\e-1\|_{L^\infty(G)} \times f_\e(N)=o_\e(1)
\end{equation}
then \(v_\e\) has \(N\) vortices \(a_1^\e,\cdots,a_N^\e\) and, possibly after extraction and letting \(\tilde{a}_i^\e:=(a_i^\e-p)/\ell\), the \(N\)-tuple \( (\tilde{a}_1^\e,\cdots,\tilde{a}_N^\e)\) converges as \(\e \rightarrow 0\) to a minimizer of \(w_N\) given by \eqref{def:renormalized_energy} and 
\begin{equation}\nonumber
GL_\e(v_\e,A_\e) =f_\e(N)+\min_{(\R^2)^N}w_N+N\gamma +o_\e(1) \text{ as } \e \rightarrow 0.
\end{equation}
\end{theorem}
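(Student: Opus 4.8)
The plan is to deduce this statement from Theorem~\ref{th:main1}, Proposition~\ref{prop:Gamma_conv_and_quasimin}, and the corresponding result for the unpinned functional, which is Theorem~12.1 (and the surrounding analysis of Chapter~12) in \cite{Sandier_Serfaty_2007}. The key point, exactly as in the previous two theorems of this section, is that once we split $u_\e=U_\e v_\e$, the decomposition Lemma~\ref{lem:substitution} together with \eqref{eq:expansion_main1} in Theorem~\ref{th:main1} shows that $(v_\e,A_\e)$ is a quasi-minimizer of $GL_\e$ as soon as $(u_\e,A_\e)$ minimizes $GL_\e^{\rm pin}$, with an energy error controlled by $\|U_\e-1\|_{L^\infty(G)}\times GL_\e(v_\e,A_\e)$. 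Since in the regime $H_N+c_\e\leq h_{ex}\leq H_{N+1}-c_\e$ one has $GL_\e(v_\e,A_\e)=f_\e(N)+O_\e(1)$, the hypothesis \eqref{hyp:hyp_2} guarantees precisely that this error is $o_\e(1)$, hence negligible at the order $1$ at which the vortex positions are detected.

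First I would write $u_\e=U_\e v_\e$ and use \eqref{WGL} together with \eqref{eq:lem_substitution} to get
\[
GL_\e^{\rm pin}(u_\e,A_\e)-E_\e^{\rm pin}(U_\e)={GL}_\e^{\rm weight}(v_\e,A_\e),
\]
and then, exactly as in the proof of Theorem~\ref{th:main1} in the periodic case, compare ${GL}_\e^{\rm weight}$ with $GL_\e$ using $m\leq U_\e\leq M$ and Theorem~\ref{th:main2}/\ref{th:main3}: for configurations with $GL_\e(v_\e,A_\e)=O_\e(f_\e(N))$ one gets
\[
|{GL}_\e^{\rm weight}(v_\e,A_\e)-GL_\e(v_\e,A_\e)|\leq C\,\|U_\e-1\|_{L^\infty(G)}\,f_\e(N)=o_\e(1)
\]
by \eqref{hyp:hyp_2}. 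Second, I would observe that minimality of $(u_\e,A_\e)$ for $GL_\e^{\rm pin}$ together with the above comparison (applied both to $(u_\e,A_\e)$ and to a test configuration built from a near-minimizer of $GL_\e$, lifted by $U_\e$) forces $(v_\e,A_\e)$ to satisfy $GL_\e(v_\e,A_\e)=\min_\H GL_\e+o_\e(1)$; this is where one must check a priori that $\min_\H GL_\e$ is itself of order $f_\e(N)$ so that \eqref{hyp:hyp_2} is the right smallness condition, which follows from the upper bound construction in Chapter~12 of \cite{Sandier_Serfaty_2007}. Third, I would invoke Theorem~12.1 of \cite{Sandier_Serfaty_2007}, which states that for such $h_{ex}$ any configuration whose energy is $\min_\H GL_\e+o_\e(1)$ has exactly $N$ vortices $a_1^\e,\dots,a_N^\e$, that the rescaled tuple $(\tilde a_1^\e,\dots,\tilde a_N^\e)$ converges up to extraction to a minimizer of $w_N$, and that $GL_\e(v_\e,A_\e)=f_\e(N)+\min_{(\R^2)^N}w_N+N\gamma+o_\e(1)$. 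Finally I would translate back: $\mu(u_\e,A_\e)$ and $\mu(v_\e,A_\e)$ differ only by terms involving $\nabla U_\e$ and $1-U_\e^2$ (see the Remark after Theorem~\ref{th:obstacle_problem}), which are negligible in the relevant norm by the rate estimates, so the vortices of $u_\e$ and of $v_\e$ coincide asymptotically, and the energy expansion for $GL_\e^{\rm pin}(u_\e,A_\e)=E_\e^{\rm pin}(U_\e)+GL_\e(v_\e,A_\e)+o_\e(1)$ follows.

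The main obstacle is the bookkeeping of error terms at order $1$: unlike the regimes of the earlier theorems in this section, here the vortex positions are read off from a term of size $O_\e(1)$ in the energy, so one must be careful that $\|U_\e-1\|_{L^\infty(G)}\times(\text{energy})$ is genuinely $o_\e(1)$ for \emph{all} competitor configurations entering the comparison argument, not merely for the minimizer; controlling the energy of these competitors uniformly (so that the product with $\|U_\e-1\|_{L^\infty}$ stays small) is the delicate step, and it is precisely what hypothesis \eqref{hyp:hyp_2} and the structure of $f_\e(N)$ are designed to ensure.
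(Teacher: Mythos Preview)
Your proposal is correct and follows essentially the same route as the paper: invoke Theorem~\ref{th:main1} and the decomposition lemma to show that $(v_\e,A_\e)$ is a quasi-minimizer of $GL_\e$ with error $o_\e(1)$ thanks to hypothesis~\eqref{hyp:hyp_2}, and then appeal to Theorem~12.1 of \cite{Sandier_Serfaty_2007} (together with Proposition~\ref{prop:Gamma_conv_and_quasimin}). The paper's proof is simply a two-sentence summary of exactly this strategy; your write-up is more detailed but not different in substance. One small remark: your final ``translate back'' step comparing the vorticities of $u_\e$ and $v_\e$ is unnecessary here, since the theorem as stated concerns only the vortices of $v_\e$.
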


\textbf{Remark:} Assumption \eqref{hyp:hyp_2} is satisfied for example when \(\delta=O_\e(\e^2)\) and 
\[ \frac{\delta^{1/4}}{\e^{1/2}}\times h_{ex}^2=o_\e(1) \]
leading to \(\delta =o_\e\left(\frac{\e^2}{h_{ex}^8} \right).\)

\begin{proof}
Here we use Theorem \ref{th:main1}, Proposition \ref{prop:Gamma_conv_and_quasimin} and Theorems 12.1 in \cite{Sandier_Serfaty_2007}. The hypothesis \eqref{hyp:hyp_2} is here to guarantee that \( GL_\e(v_\e,A_\e)\times \|U_\e-1\|_{L^\infty(G)}\) is much smaller than all the terms in the asymptotic expansion of \(\inf_{(v_\e,A_\e)} GL_\e(v_\e,A_\e)\).
\end{proof}

\section{Asymptotics for the pinned 3D Ginzburg-Landau energy}\label{sec:3D_results}

Let \(G \subset \R^3\) be a smooth bounded domain. In this section we consider a 3D-variant of the  energy \eqref{eq:GL_pinned}. Here we use differential forms formalism. We define 
\begin{equation}
\F^\pin_\e(u,A)=\frac12 \int_G |du-iAu|^2+\frac{1}{4\e^2}\int_G (a_\e(x)-|u|^2)^2+\frac12 \int_{\R^3}  |dA-h_{ex}|^2,
\end{equation}
here \(u \in H^1(G,\mathbb{C})\), \( du\) is a \(1\)-form, \(h_{ex} \in L^2_{\text{loc}}(\Lambda^2 \R^3)\) is a \(2\)-form and \(A \in H^1(\Lambda^1 \R^3)\) is a \(1\)-form and \(a_\e\) is defined by \eqref{def:a_e_periodic} or by \eqref{def:a_e_random}.
We define 
\begin{equation}
\dot{H}^1_*(\Lambda^1 \R^3)=\{ A\in \dot{H}^1(\Lambda^1\R^3)\mid \ d^*A=0\}
\end{equation}
and we endow this space with the inner product
\begin{equation}
(A,B)_{\dot{H}^1_*(\Lambda^1\R^3)}:=(dA,dB)_{L^2(\Lambda^2 \R^3)}
\end{equation}
for which \(\dot{H}^1_*(\Lambda^1\R^3)\) is a Hilbert space.
For \(u \in H^1(G,\mathbb{C})\) we define (here we denote $u=u^1+i u^2$, $u^1,u^2\in H^1(G,\mathbb{R})$)
\begin{equation}
ju:=(iu,du)=u^1du^2-u^2du^1, \quad Ju=du^1\wedge du^2=\frac12 d (ju).
\end{equation}

\begin{theorem}
Assume that \(h_{ex}=d A_{ex,\e}\) and that there exists \(A_{ex,0} \in H^1_{loc}(\Lambda^1\R^3)\) such that
\begin{equation}\nonumber
\frac{A_{ex,\e}}{|\log \e|}-A_{ex,0} \rightarrow 0 \text{ in } \dot{H}^1_*(\Lambda^1\R^3).
\end{equation}
Let \( (u_\e,A_\e)\in H^1(G,\mathbb{C})\times \left[ A_{ex,0}+\dot{H}^1_*(\Lambda^1\R^3)\right]\) be  a family of minimizers of \(\mathcal{F}_\e^\pin\). We write \(u_\e=U_\e v_\e\) where \(U_\e\) is the minimizer of \(E_\e^\pin\) given by Definition \ref{Def.SpecialSol}. Then, up to a subsequence  we have
\begin{equation}\nonumber
\frac{A_\e}{|\log \e|} \rightharpoonup A_* \text{ in } \dot{H}^1_*(\Lambda^1\R^3)
\end{equation}
for some \(A_*\in A_{ex,0} +\dot{H}^1_*(\Lambda^1\R^3)\),
\begin{equation}\nonumber
\frac{j v_\e}{|\log \e|}\rightharpoonup w_* \text{ in } L^\frac{8}{6}(\Lambda^1 G)
\end{equation}
\begin{equation}\nonumber
\frac{jv_\e}{|v_\e| |\log \e|} \rightharpoonup w_* \text{ in } L^2(\Lambda^1G),
\end{equation}
\begin{equation}\nonumber
\frac{Jv_\e}{|\log \e|}=\frac{d (jv_\e)}{2|\log \e|}\rightarrow J_* \text{ in } W^{-1,p}(\Lambda^2G) \ \forall p<3/2
\end{equation}
for some \((J_*,w_*)\in \mathcal{A}_0:= \{ (J,w)\mid J \text{ is an exact measure-valued } 2-\text{ form in }G, \ v \in L^2(\Lambda^1G)\}\) and \(J_*=\frac{dw_*}{2}\in H^{-1}(\Lambda^2G)\).
Besides \( (w_*,A_*)\) is a minimizer of the following functional defined for \( (v,A) \in L^2(\Lambda^1G)\times \left[  A_{ex,0}+\dot{H}^1_*(\Lambda^1\R^3) \right] \) by
\begin{equation}\nonumber
\mathcal{F}(v,A)=\begin{cases}
\frac12 \|dv\|+\frac12 \|v-A\|^2_{L^2(\Lambda^1G)}+\frac12 \|dA-dA_{ex,0}\|^2_{L^2(\Lambda^2 \R^3)} \\
\phantom{aaaaaaaaaaaaaaaaaaaaaaa} \text{ if } \|dv\|=|dv|(\O)<+\infty \\
+\infty  \text{ otherwise }.
\end{cases}
\end{equation}
\end{theorem}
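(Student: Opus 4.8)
The plan is to follow, step by step, the scheme used for the two-dimensional Theorem~\ref{th:main1} and Theorem~\ref{th:obstacle_problem}. First I would apply the decomposition Lemma~\ref{lem:substitution}, which by the Remark following it holds in arbitrary dimension and for the gauge-covariant derivative. Writing $u_\e=U_\e v_\e$ with $U_\e$ the positive minimizer of $E_\e^{\pin}$ in $H^1(G)$ given by Definition~\ref{Def.SpecialSol} (applicable since $G\subset\R^3$ is smooth), one obtains
\[
\F^\pin_\e(u_\e,A_\e)-E_\e^{\pin}(U_\e)=\F_\e^{\mathrm{weight}}(v_\e,A_\e):=\frac12\int_G U_\e^2|dv_\e-iA_\e v_\e|^2+\frac{1}{4\e^2}\int_G U_\e^4(1-|v_\e|^2)^2+\frac12\int_{\R^3}|dA_\e-h_{ex}|^2 .
\]
This identity is proved exactly as \eqref{eq:lem_substitution}: one expands $d(U_\e v_\e)-iA_\e U_\e v_\e=v_\e\,dU_\e+U_\e(dv_\e-iA_\e v_\e)$, integrates the cross term $U_\e\,dU_\e\cdot\tfrac12 d|v_\e|^2$ by parts (the boundary term vanishes by the homogeneous Neumann condition $\p_\nu U_\e=0$), and uses the Euler--Lagrange equation $-\Delta U_\e=\e^{-2}U_\e(a_\e-U_\e^2)$ from Corollary~\ref{CorUniqSpecialSol}. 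By Theorem~\ref{th:main2} in the periodic case, resp.\ Theorem~\ref{th:main3} almost surely in the random case, $\|U_\e-1\|_{L^\infty(G)}\to0$.

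Next I would reduce to the unpinned three-dimensional energy $\F_\e$, that is $\F^\pin_\e$ with $a_\e$ replaced by $1$, exactly as in the proof of Theorem~\ref{th:main1}. Since $0<m\le U_\e\le M$, one has $m^4\F_\e(v,A)\le\F_\e^{\mathrm{weight}}(v,A)\le M^4\F_\e(v,A)$ for every admissible $(v,A)$, whence $\inf\F_\e^{\mathrm{weight}}=(1+o_\e(1))\inf\F_\e$ and, on the sublevel set $\{\F_\e\le 2M^4\inf\F_\e\}$, the comparison $|\F_\e^{\mathrm{weight}}-\F_\e|\le2M^4\max(\|U_\e^2-1\|_{L^\infty},\|U_\e^4-1\|_{L^\infty})\inf\F_\e$. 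Running the same chain of equivalences as in the proof of Theorem~\ref{th:main1} then shows that $(u_\e,A_\e)$ is a family of quasi-minimizers of $\F^\pin_\e$ if and only if $(v_\e,A_\e)$ is a family of quasi-minimizers of $\F_\e$; in particular the minimizers $(u_\e,A_\e)$ give rise to quasi-minimizers $(v_\e,A_\e)$ of $\F_\e$.

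Finally I would invoke the known three-dimensional $\Gamma$-convergence result for $\F_\e/|\log\e|^2$ towards $\F$ in the regime $A_{ex,\e}/|\log\e|\to A_{ex,0}$ (the differential-forms analogue of the two-dimensional results used in Section~\ref{sec:2D_results}), together with its compactness property in the sense of \eqref{CompactnessGammaProp}, and apply Proposition~\ref{prop:Gamma_conv_and_quasimin} to the quasi-minimizing family $(v_\e,A_\e)$. One checks that this result fits Definition~\ref{def:Gamma_conv} with $\I_\e=H^1(G,\mathbb{C})\times[A_{ex,0}+\dot H^1_*(\Lambda^1\R^3)]$, with $\I$ carrying, on the three components, the weak $L^{8/6}$ topology for $jv$, the $W^{-1,p}$ topology with $p<3/2$ for $Jv$, and the weak $\dot H^1_*$ topology for $A$, and with $P_\e(v,A)=\bigl(jv/|\log\e|,\,Jv/|\log\e|,\,A/|\log\e|\bigr)$. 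Proposition~\ref{prop:Gamma_conv_and_quasimin} then produces, up to extraction, a limit $(w_*,A_*)$ in $\mathcal A_0\times[A_{ex,0}+\dot H^1_*(\Lambda^1\R^3)]$ minimizing $\F$, and the asserted convergences of $A_\e/|\log\e|$, $jv_\e/|\log\e|$, $jv_\e/(|v_\e|\,|\log\e|)$ and $Jv_\e/|\log\e|$ are precisely the compactness statements attached to the cited $\Gamma$-convergence result, transferred to $(v_\e,A_\e)$.

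The step I expect to be the main obstacle is the last one: one must make sure the cited three-dimensional $\Gamma$-convergence and compactness statement is formulated so that it applies to \emph{quasi}-minimizers, i.e.\ that its compactness and lower-bound parts use only the bound $\F_\e(v_\e,A_\e)\le(1+o_\e(1))\inf\F_\e$, and that the relative error $o_\e(1)$ produced by the decomposition is genuinely negligible against every term in the asymptotic expansion of $\inf\F_\e$ (which requires knowing that $\inf\F_\e$ is of order $|\log\e|^2$), while handling correctly the unbounded magnetic term $\int_{\R^3}|dA-h_{ex}|^2$ and the affine constraint $A\in A_{ex,0}+\dot H^1_*(\Lambda^1\R^3)$. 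By contrast the decomposition step is routine once $U_\e$ is known to solve the Neumann problem and to converge uniformly to $1$, both of which are supplied by Section~\ref{sec:convergence}.
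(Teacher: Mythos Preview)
Your proposal is correct and follows essentially the same approach as the paper: decomposition via Lemma~\ref{lem:substitution}, uniform convergence of $U_\e$ from Theorems~\ref{th:main2}/\ref{th:main3} to obtain the 3D analogue of Theorem~\ref{th:main1}, and then Proposition~\ref{prop:Gamma_conv_and_quasimin} combined with the three-dimensional $\Gamma$-convergence result. The paper's proof is in fact terser than yours; the specific external input it cites for the last step is Theorem~4 of Baldo--Jerrard--Orlandi--Soner (2012), which supplies exactly the $\Gamma$-convergence and compactness you describe.
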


\begin{proof}
It is easy to check that an analogue of Lemma \ref{lem:substitution} holds for the \(3d\)-magnetic Ginzburg-Landau energy. With Proposition \ref{th:main2} and \ref{th:main3} we find that the analogue of Theorem \ref{th:main1} is true for the \(3d\)-Ginzburg-Landau energy. We conclude by using, Proposition \ref{prop:Gamma_conv_and_quasimin} and Theorem 4 in \cite{Baldo_Jerrard_Orlandi_Soner_2012}.
\end{proof}
\section{Asymptotics for the pinned Allen-Cahn energy}\label{sec:Allen-Cahn_results}
In this section $G$ is a \(\C^1\) bounded open set of $\R^d$, \(d\geq 1\). By taking \(A=0\) and \(h_{ex}=0\) we are able to describe  the asymptotic behavior of a pinned Allen-Cahn functional. For \(u \in H^1(G,\R)\) we define
\begin{equation}\label{def:pinned_Allen_Cahn}
AC_\e^{\text{pin}}(u)=\e \int_G |\nabla u|^2+\frac{1}{\e}\int_G (a_\e(x)-u^2)^2
\end{equation}
where \(a_\e\) is given by \eqref{def:a_e_periodic} or \eqref{def:a_e_random}.
\begin{theorem}
Let \( 0< \beta <1\) and \((u_\e)_\e \subset H^1(G,\R)\) be a family of minimizers of the pinned Allen-Cahn energy \eqref{def:pinned_Allen_Cahn} under the constraint \( \frac{1}{|G|}\int_G u_\e=\beta\). Then we can write \(u_\e=U_\e v_\e\) with \(U_\e\) given by Definition \ref{Def.SpecialSol} and we have that there exists \(v \in BV(G,\{\pm 1 \})\) such that
\begin{equation}\nonumber
v_\e \rightarrow v \text{ in } L^1(G) 
\end{equation}
and \(v\) minimizes
\begin{equation}\nonumber
A(w)= \frac{4}{3}\int_G \abs{Dw}
\end{equation}
for \(w\in BV(G,\{\pm 1 \})\) under the constraint \(\frac{1}{|G|}\int_G w=\beta\).
\end{theorem}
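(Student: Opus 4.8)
The plan is to run exactly the reduction used throughout the paper: decouple the energy with the arbitrary-dimensional version of Lemma \ref{lem:substitution}, exploit the uniform convergence $U_\e\to1$, and invoke a known $\Gamma$-convergence result for the resulting \emph{unpinned} problem, which here is the classical Modica--Mortola functional under a prescribed-mass constraint whose $\Gamma$-limit is precisely the prescribed-mass perimeter functional $A$. Concretely I would first note that $AC_\e^{\pin}=2\e\,E_{\e/\sqrt2}^{a_\e}$, so the positive minimizer $U_\e$ of $AC_\e^{\pin}$ over $H^1(G,\R)$ (with its natural homogeneous Neumann condition) is, up to the harmless rescaling $\e\mapsto\e/\sqrt2$, the function of Definition \ref{Def.SpecialSol}; hence $m\le U_\e\le M$, and since $\delta=o_\e(\e)$ is equivalent to $\delta=o_\e(\e/\sqrt2)$, Theorems \ref{th:main2}--\ref{th:main3} apply and give $\|U_\e-1\|_{L^\infty(G)}\to0$ (a.s.\ in $\omega$ in the random case). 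By the Remark after Lemma \ref{lem:substitution} the decoupling holds in every dimension, so writing $u=U_\e v$, $v\in H^1(G,\R)$,
\[
AC_\e^{\pin}(u)=AC_\e^{\pin}(U_\e)+\e\int_G U_\e^2|\n v|^2+\frac1\e\int_G U_\e^4(1-v^2)^2=:AC_\e^{\pin}(U_\e)+J_\e(v).
\]
Since $AC_\e^{\pin}(U_\e)$ is independent of $v$ and $U_\e\ge m>0$, $(u_\e)$ minimizes $AC_\e^{\pin}$ under $\frac1{|G|}\int_Gu=\beta$ iff $v_\e:=u_\e/U_\e$ minimizes $J_\e$ under the $\e$-dependent constraint $\mathcal C_\e(v):=\frac1{|G|}\int_GU_\e v=\beta$; moreover $m\le U_\e\le M$ with $\|U_\e-1\|_{L^\infty}\to0$ yield $(1-o_\e(1))\,\mathsf{MM}_\e\le J_\e\le(1+o_\e(1))\,\mathsf{MM}_\e$, where $\mathsf{MM}_\e(v)=\e\int_G|\n v|^2+\frac1\e\int_G(1-v^2)^2$ is the Modica--Mortola functional; note $2\int_{-1}^1(1-s^2)\,ds=\frac83$ and $\int_G|Dw|=2\,\mathrm{Per}(\{w=1\},G)$ for $w\in BV(G,\{\pm1\})$, so the $\Gamma$-limit constant of $\mathsf{MM}_\e$ matches the one in $A$.

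Next I would fix a minimizer $v^*$ of $A$ over $\{w\in BV(G,\{\pm1\}):\frac1{|G|}\int_Gw=\beta\}$ (nonempty since $\beta\in(-1,1)$; $A$ is $L^1$-lower semicontinuous), and build an \emph{admissible} recovery sequence $\hat v_\e$ — that is, one with $\mathcal C_\e(\hat v_\e)=\beta$, $\hat v_\e\to v^*$ in $L^1$, and $\limsup_\e\mathsf{MM}_\e(\hat v_\e)\le A(v^*)$ — by starting from the classical Modica--Mortola recovery sequence for $v^*$ and fixing the mass exactly via an $o_\e(1)$ perturbation of the interface of $v^*$. Minimality then gives $J_\e(v_\e)\le J_\e(\hat v_\e)\le(1+o_\e(1))\mathsf{MM}_\e(\hat v_\e)$, hence $\limsup_\e J_\e(v_\e)\le A(v^*)$ and, by the two-sided comparison, $\sup_\e\mathsf{MM}_\e(v_\e)<\infty$. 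The standard Modica--Mortola compactness (via $\Phi(t)=t-t^3/3$: $\Phi\circ v_\e$ bounded in $W^{1,1}(G)$, $(1-v_\e^2)^2\to0$ in $L^1(G)$, $v_\e$ bounded in $L^4(G)$) then yields, along a subsequence, $v_\e\to v$ in $L^1(G)$ with $v\in BV(G,\{\pm1\})$; and since $(v_\e)$ is bounded in $L^1$ and $\|U_\e-1\|_{L^\infty}\to0$, $\beta=\mathcal C_\e(v_\e)=\frac1{|G|}\int_Gv_\e+o_\e(1)\to\frac1{|G|}\int_Gv$, so $v$ is admissible for $A$.

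To conclude, I would use the Modica--Mortola $\Gamma$-liminf inequality together with $J_\e\ge(1-o_\e(1))\mathsf{MM}_\e$ and the boundedness of $\mathsf{MM}_\e(v_\e)$ to get $\liminf_\e J_\e(v_\e)\ge\liminf_\e\mathsf{MM}_\e(v_\e)\ge\frac83\mathrm{Per}(\{v=1\},G)=A(v)$; combining with the upper bound, $A(v)\le\liminf_\e J_\e(v_\e)\le\limsup_\e J_\e(v_\e)\le A(v^*)=\min A\le A(v)$, so all are equalities, $v$ is a constrained minimizer of $A$, and $J_\e(v_\e)\to\min A$. This is the claim (a.e.\ $\omega$ in the random case). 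Equivalently the whole argument can be packaged as an instance of Proposition \ref{prop:Gamma_conv_and_quasimin} applied to $J_\e$ on the constrained spaces, the $\Gamma$-convergence input being the classical constrained Modica--Mortola result (see \cite{Braides_2002} and the references therein).

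The one step that is not mere assembly — the expected main obstacle — is the construction of the \emph{admissible} recovery sequence, i.e.\ enforcing the $U_\e$-twisted constraint $\mathcal C_\e(\hat v_\e)=\beta$ \emph{exactly} while keeping the energy controlled: simply adding a small constant to the unconstrained recovery sequence is not allowed, since it perturbs the potential term by an amount of order $o_\e(1)/\e$. Instead one must perturb the \emph{location} of the interface of $v^*$ by $o_\e(1)$ and verify that this changes the perimeter — hence the limiting energy — only by $o_\e(1)$, which is classical in the constrained Modica--Mortola theory but requires some care (for instance by first approximating $v^*$ by a configuration with smooth interface and then applying a volume-fixing normal perturbation).
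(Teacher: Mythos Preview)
Your proposal is correct and follows essentially the same route as the paper: decouple via Lemma~\ref{lem:substitution}, use the uniform convergence $U_\e\to1$ from Theorems~\ref{th:main2}--\ref{th:main3}, and invoke the constrained Modica--Mortola $\Gamma$-convergence together with Proposition~\ref{prop:Gamma_conv_and_quasimin}. Your write-up is considerably more detailed than the paper's one-line proof; in particular you explicitly flag and handle the $\e$-dependent mass constraint $\mathcal C_\e(v)=\frac1{|G|}\int_G U_\e v=\beta$ (showing both that it passes to the limit and that a recovery sequence can be made admissible for it), a point the paper leaves implicit in its citation of \cite{Modica_1987}.
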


\begin{remark} Recall that we normalized the average of \(a_0\) and \(a_1\) such that these quantities are equal to \(1\).
\end{remark}
\begin{proof}
This follows from an analogue of Theorem \ref{th:main1} which is true thanks to Lemma \ref{lem:substitution} and Theorem \ref{th:main2} and Theorem \ref{th:main3}. We conclude with Proposition \ref{prop:Gamma_conv_and_quasimin} and the \(\Gamma\)-convergence results in \cite{Modica_1987}
\end{proof}
%
%\section{Conclusion and perspectives}
%
%We have shown that, if \(\delta\ll\e\) and under some assumptions on the size of \(\delta\) and \(\e\), in order to understand the asymptotic behavior of a pinned energy like \(GL_\e^{\text{pin}}\) one can consider first the limit \(\delta \rightarrow 0\) and then the limit \(\e\rightarrow 0\). However many questions remain open. For example, if we consider the positive minimizer \( U_\e\) of \(E_\e^{\text{pin}}\) given by Definition \ref{Def.SpecialSol}, what is the optimal rate of convergence of \(U_\e\) towards \(\int_Q a_0(y)dy=1\)? More generally, one can think of considering variational PDEs where both oscillations and singular perturbations are present. For example, one can look at problems of the form
%\[ -\e^2 \dive \left( A\left( \frac{x}{\delta} \right) \nabla u \right)+ f\left( \frac{x}{\delta},u\right)\]
%with \(A,f\) periodic functions on the first variable. The goal could be to determine the limit of solutions of this equation (with Neumann or Dirichlet boundary conditions) and to obtain explicit rate of convergences. 

\smallskip

\textbf{Acknowledgements:} We would like to warmly thank Alberto Farina for giving us the reference \cite{Farina_2003}.
\bibliographystyle{abbrv}
\bibliography{biblio}

\end{document}